\newcommand{\bb}{\mathbb}
\renewcommand{\th}{\textsuperscript{th}\ }
\providecommand{\eps}{\varepsilon}
\newcommand{\norm}[1]{\left\lVert#1\right\rVert}
\providecommand{\tensor}{\otimes}
\renewcommand{\d}{\mathrm{d}}
\newcommand{\D}{\mathrm{D}}
\newcommand{\pdeproblem}[6]{
  \setlength{\arraycolsep}{0pt}
  \renewcommand{\arraystretch}{1.2}
  \left\{\begin{array}{r @{\ } l @{\ } l}
    #1 &= #2 &\text{ in } #3 \\
    #4 &= #5 &\text{ on } #6 \\
  \end{array}\right.
}
\DeclareMathOperator*{\esssup}{ess\ sup}
\DeclareMathOperator{\dist}{dist}
\DeclareMathOperator{\Sym}{Sym}
\DeclareMathOperator{\loc}{loc}
\DeclareMathOperator{\BV}{BV}
\DeclareMathOperator{\LL}{L}
\DeclareMathOperator{\CC}{C}
\DeclareMathOperator{\WW}{W}
\newcommand{\B}{\mathrm{B}}
\newcommand{\twopartdef}[4] { \left\{ \begin{array}{ll} #1 &  #2 \\ #3 &  #4 \end{array} \right.}
\def\Xint#1{\mathchoice
  {\XXint\displaystyle\textstyle{#1}}%
  {\XXint\textstyle\scriptstyle{#1}}%
  {\XXint\scriptstyle\scriptscriptstyle{#1}}%
  {\XXint\scriptscriptstyle\scriptscriptstyle{#1}}%
  \!\int}
\def\XXint#1#2#3{\setbox0=\hbox{$#1{#2#3}{\int}$}\vcenter{\hbox{$#2#3$}}\kern-.5\wd0}
\def\dashint{\Xint-}
\theoremstyle{definition}
\newtheorem{thm}{Theorem}[section]
\newtheorem*{thm*}{Theorem}
\newtheorem{defn}[thm]{Definition}
\newtheorem{lem}[thm]{Lemma}
\newtheorem{prop}[thm]{Proposition}
\newtheorem{hyp}[thm]{Hypotheses}
\newtheorem{rem}[thm]{Remark}
\newcommand*{\doi}[1]{\href{https://doi.org/#1}{\nolinkurl{#1}}}
\newcommand*{\arxiv}[1]{\href{https://arxiv.org/abs/#1}{\nolinkurl{#1}}}
\numberwithin{equation}{section}
\begin{document}

\title[Higher quasiconvex regularity with Orlicz growth]{Partial regularity for minima of higher-order quasiconvex integrands with natural Orlicz growth}

\author[C.~Irving]{Christopher Irving}
\thanks{\textit{Funding}: The author was supported by the EPSRC [EP/L015811/1]}
\address{Faculty of Mathematics\\ Technical University Dortmund\\ Vogelpothsweg 87\\ 44227\\ Dortmund\\ Germany}
\email{christopher.irving@tu-dortmund.de}
\date{\today}
\subjclass[2020]{35J48,35J50}
\keywords{higher-order quasiconvexity, epsilon-regularity, general growth}

\maketitle

\begin{abstract}
A partial regularity theorem is presented for minimisers of $k^{\mathrm{th}}$-order functionals subject to a quasiconvexity and general growth condition. We will assume a natural growth condition governed by an $N$-function satisfying the $\Delta_2$ and $\nabla_2$ conditions, assuming no quantitative estimates on the second derivative of the integrand; this is new even in the $k=1$ case. These results will also be extended to the case of strong local minimisers.
\end{abstract}

\section{Introduction}

In this paper we will investigate the regularity of minimisers of functionals of the form
\begin{equation}\label{eq:functional_autonomous}
  \mathcal F(w) = \int_{\Omega} F(\nabla^k w) \,\d x,
\end{equation} 
where $\Omega \subset \bb R^n$ is a bounded domain and $w \colon \Omega \to \bb R^N$ is a vector-valued mapping. We will assume $F$ satisfies a suitable \emph{higher-order quasiconvexity condition} in the sense of \textsc{Morrey} \cite{article:Morrey52} for $k=1$ and \textsc{Meyers} \cite{article:Meyers65} for $k\geq2,$ and a general growth condition governed by an $N$-function $\varphi$ which we specify below. If $F$ is sufficiently regular, it is known that minimisers $u$ of (\ref{eq:functional_autonomous}) are \emph{$F$-extremal,} that is they satisfy the associated Euler-Lagrange system
\begin{equation}\label{eq:F_extremal}
  (-1)^k \nabla^k :  F'(\nabla^k u) := (-1)^k\sum_{\lvert\alpha\rvert=k} \D^{\alpha} \left(F'(\nabla^ku) : \mathrm e^{\alpha}\right) = 0
\end{equation} 
in $\Omega,$ using the notation detailed in Section \ref{sec:notation}.

A striking feature of vector-valued problems is that minimisers need not be regular in general (see for instance \cite{article:degiorgi68,article:Mazya69,article:Necas77,article:SverakYan02,article:MooneySavin16}), however we can still hope for \emph{partial regularity results}, which assert that, away from a small singular set, minimisers $u$ are as regular as the data allows. The first partial regularity result in the quasiconvex setting was established by \textsc{Evans} \cite{article:Evans86} which has been extended significantly since; see \cite{article:FuscoHutchinson85,article:GiaquintaModica86,article:AcerbiFusco87,article:EvansGariepy87,article:CarozzaFuscoMingione,article:AcerbiMingione01,article:Kronz02,article:KristensenTaheri03,article:Dieningetal12,article:Bogelein12,article:GmeinederKristensen19,article:DeFilippis21,article:DeFilippisStroffolini2023} for a non-exhaustive list.

In the higher order case, the regularity theory for strongly $k$-quasiconvex integrands has been studied for instance by \textsc{Guidorzi} \cite{article:Guidorzi00}, \textsc{Kronz} \cite{article:Kronz02}, and \textsc{Schemm} \cite{article:Schemm11}. Note the results of \cite{article:DalMasoEtAl04,article:Cagnetti11} show that higher-order quasiconvexity reduces to ordinary quasiconvexity under quantitative Lipschitz bounds on $F',$ and while these results cannot be directly applied, we expect the higher order case to be essentially the same as the $k=1$ setting.

We will mention that in the quasiconvex setting, some kind of minimising condition is essential to obtain regularity in general. This is illustrated in the seminal work of \textsc{M\"uller \& \v{S}ver\'ak} \cite{article:MullerSverak03} where Lipschitz but nowhere $\CC^1$ solutions to the associated Euler-Lagrange system are constructed, and this has been extended by \textsc{Kristensen \& Taheri} \cite{article:KristensenTaheri03} and \textsc{Sz\'ekelyhidi} \cite{article:Szekelyhidi04} to weak local minimisers and strongly polyconvex integrands respectively. As illustrated in \cite{article:KristensenTaheri03} one can infer partial regularity for strong local minimisers however, which we discuss in Section \ref{sec:local_min}.

\subsection{Hypotheses and main results}\label{sec:intro_results}

We will consider higher-order integrands satisfying a natural growth condition governed by an $N$-function, and a strict quasiconvexity condition which matches said growth bound. We refer the reader to Sections \ref{sec:notation}, \ref{sec:Nfunctions} for the precise definitions and conventions we use.

\begin{hyp}\label{hyp:generalgrowth_F}

  Let $F \colon \bb M_k \to \bb R$ with $n \geq 2,$ $N,k\geq 1$ satisfy the following.
 
\begin{enumerate}[label=(H\arabic*)]
  \setcounter{enumi}{-1}

  \item $F$ is $\CC^2$ regular.

  \item There exists $K \geq 0$ and an $N$-function $\varphi$ satisfying the $\Delta_2$ and $\nabla_2$ conditions (as defined in Section \ref{sec:Nfunctions}) such that
    \begin{equation*}\label{eq:F_growth}
      \lvert F(z)\rvert \leq K \left( 1 + \varphi(\lvert z\rvert) \right) 
    \end{equation*} 
    for all $z \in \bb M_k.$ By rescaling $\varphi, K$ if necessary, we will assume $\varphi(1)=1.$

  \item $F$ is \emph{strictly $\WW^{k,\varphi}$-quasiconvex} in the sense that there is $\nu>0$ such that
    \begin{equation*}\label{eq:F_quasiconvex}
      \int_{\bb R^n} F(z_0 + \nabla^k \xi) - F(z_0) \,d x \geq \nu \int_{\bb R^n}  \varphi_{1+\lvert z_0\rvert}(\lvert\nabla^k \xi\rvert) \,\d x
    \end{equation*} 
    for all $z_0 \in \bb M_k$ and $\xi \in \CC^{\infty}_c(\bb R^n),$ where $\varphi_a$ is defined in (\ref{eq:varphi_shifted}).
\end{enumerate}
 
\end{hyp}

For integrands of this type, we say $u \in \WW^{k,\varphi}(\Omega,\bb R^N)$ is a \emph{minimiser} of (\ref{eq:functional_autonomous}) if for any $\xi \in \WW^{k,\varphi}_0(\Omega,\bb R^N)$ we have
\begin{equation}
  \mathcal F(u) \leq \mathcal F(u+\xi).
\end{equation} 
Note that it suffices to verify this for $\xi \in \CC^{\infty}_c(\Omega,\bb R^N),$ from which the above follows by density (using both \ref{eq:F_growth} and \ref{eq:F_quasiconvex}).

We will point out that, while the partial regularity for quasiconvex integrands satisfying a general growth condition have been considered by \textsc{Diening, Lengeler, Stroffolini, \& Verde} \cite{article:Dieningetal12} for the $k=1$ case, the authors assume a \emph{controlled growth condition} where quantitative estimates are assumed on the second derivatives $F''.$ We will relax this condition and establish the following.

\begin{thm}[$\eps$-regularity theorem]\label{thm:main_epsreg}
  Let $F$ satisfy Hypotheses \ref{hyp:generalgrowth_F} and let $M>0,$ $\alpha \in (0,1)$ be given. Then there exists $\eps >0$ such that if $u \in \WW^{k,\varphi}(\Omega,\bb R^N)$ minimises (\ref{eq:functional_autonomous}) where $\Omega \subset \bb R^n$ is a bounded domain, and $\B_R(x_0) \subset \Omega$ is such that
  \begin{equation}
    \lvert(\nabla^k u)_{\B_R(x_0)}\rvert \leq M, \quad \dashint_{\B_R(x_0)} \varphi_{1+M}\left( \lvert\nabla^k u - (\nabla^k u)_{\B_R(x_0)}\rvert \right) \,\d x \leq \eps,
  \end{equation} 
  then we have $u$ is of class $\CC^{k,\alpha}$ in $\B_{R/2}(x_0).$
\end{thm}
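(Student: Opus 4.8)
The plan is to follow the standard blow-up/harmonic-approximation scheme for partial regularity in the quasiconvex setting, but adapted to the Orlicz framework where no quantitative control on $F''$ is available. The central quantity is the \emph{excess} $\Phi(x_0,\varrho)$ built from the shifted $N$-function, roughly $\Phi(x_0,\varrho) = \dashint_{B_\varrho(x_0)} \varphi_{1+|(\nabla^k u)_{B_\varrho}|}\bigl(|\nabla^k u - (\nabla^k u)_{B_\varrho}|\bigr)\,\d x$, and the goal is a decay estimate: there is $\theta \in (0,1)$ and, for any choice of radius-independent parameters, an $\varepsilon_0$ such that $\Phi(x_0,\varrho) \leq \varepsilon_0$ and $|(\nabla^k u)_{B_\varrho}| \leq M$ imply $\Phi(x_0,\theta\varrho) \leq \theta^{2\alpha}\,\Phi(x_0,\varrho)$ together with a uniform bound on the new average. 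Iterating this on all small balls in $B_{R/2}(x_0)$, Campanato's characterisation of Hölder spaces (in the form suited to $\varphi$, via the equivalence of $\varphi_a$-averages with power averages on the relevant range given by $\Delta_2/\nabla_2$) then yields $\nabla^k u \in C^{0,\alpha}$ locally, i.e. $u \in C^{k,\alpha}(B_{R/2}(x_0))$.

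First I would record the Caccioppoli inequality of the second kind for $W^{k,\varphi}$-minimisers: testing minimality against $\xi = \eta^k(u - P)$ where $P$ is a polynomial of degree $k$ with $\nabla^k P = (\nabla^k u)_{B_\varrho}$ and $\eta$ a cutoff, using the quasiconvexity lower bound \ref{eq:F_quasiconvex} on the good part and the growth bound \ref{eq:F_growth} together with the shift-change inequalities for $\varphi_a$ to absorb the error terms, one obtains $\dashint_{B_{\varrho/2}} \varphi_{1+|(\nabla^k u)_{B_\varrho}|}(|\nabla^k u - (\nabla^k u)_{B_\varrho}|)\,\d x \lesssim \dashint_{B_\varrho} \varphi_{1+|(\nabla^k u)_{B_\varrho}|}\bigl(|\nabla^k u - P|/\varrho\bigr)\,\d x$. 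Next comes the blow-up: suppose the decay fails, so there are balls $B_{\varrho_j}(x_j)$ with $\Phi_j := \Phi(x_j,\varrho_j) \to 0$, $|(\nabla^k u)_{B_{\varrho_j}}| \to a$ with $|a| \le M$, yet the conclusion fails at scale $\theta$. Rescale: set $A_j = (\nabla^k u)_{B_{\varrho_j}}$, $\lambda_j = \varphi_{1+|A_j|}^{-1}(\Phi_j)$ or a comparable amplitude, and define $v_j$ by subtracting the degree-$k$ Taylor polynomial of $u$ with $k$-th gradient $A_j$ and dividing by $\lambda_j \varrho_j^k$; the normalisation forces $\dashint_{B_1} \varphi_{1+|A_j|}(\lambda_j|\nabla^k v_j|)/\Phi_j \,\d x \approx 1$.

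The key step is to pass to the limit in the (Euler–Lagrange / almost-minimality) identity for $v_j$ and identify the limit $v$ as a solution of a linear constant-coefficient $k$-th order elliptic system $\mathcal{A}[\nabla^k v] = 0$ whose coefficients are a weak-$*$ limit of difference quotients of $F'$ along the blow-up — this is exactly where the absence of a pointwise $F''$ bound bites, so one works with $F'$ directly, using that $C^1$-ness of $F$ plus the quasiconvexity inequality passes to the limit to give a Legendre–Hadamard condition on $\mathcal{A}$, hence $v \in C^\infty$ with the classical linear decay $\dashint_{B_\theta} |\nabla^k v - (\nabla^k v)_{B_\theta}|^2 \lesssim \theta^{2}\dashint_{B_1}|\nabla^k v - A|^2$. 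The remaining obstacle, and the genuinely technical heart, is to upgrade the \emph{weak} convergence $v_j \weakto v$ to strong convergence of the excess, i.e. to show the rescaled shifted-energy densities do not lose mass — here I would use the $\Delta_2 \cap \nabla_2$ structure to bound the shifted functions above and below by comparable power-type functions on the range dictated by $\lambda_j$, deploy a Sobolev–Poincaré inequality in Orlicz spaces, and run a careful Caccioppoli-on-the-rescaled-sequence argument (the ``$v_j$ are almost $\mathcal{A}$-harmonic'' estimate) to rule out concentration; a $p$-harmonic- or $\mathcal{A}$-harmonic-approximation lemma in the Orlicz setting, combined with the Caccioppoli inequality above transplanted to $v_j$, closes the loop. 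Once the decay estimate holds, the iteration and Campanato argument are routine, and the theorem follows.
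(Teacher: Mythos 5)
Your strategy is sound but follows a genuinely different route from the paper's. You prove the excess decay indirectly, by blow-up and contradiction in the style of \cite{article:Evans86,article:AcerbiFusco87}: rescale around a sequence of failing balls, identify the weak limit of the normalised maps as a solution of a constant-coefficient Legendre--Hadamard elliptic system, and transfer its linear decay back to $u$, the crux being the upgrade from weak convergence to convergence of the shifted energies. The paper instead runs a \emph{direct} harmonic approximation in the style of \cite{article:GmeinederKristensen19}: on a single ball it compares $u$ with the solution $h$ of the linearised Dirichlet problem with frozen coefficients $F''(\nabla^k a)$ (Lemma \ref{lem:harmonic_approximation}), and estimates $\dashint\varphi_{1+M}(\lvert\nabla^{k-1}(u-a-h)\rvert/R)$ by testing the Euler--Lagrange system against a Lipschitz-truncated solution of a dual problem (Propositions \ref{prop:elliptic_solvability} and \ref{prop:lipschitz_truncation}); combined with the Caccioppoli inequality (Lemma \ref{lem:caccioppoli}) this yields an excess improvement with an explicit error term $\gamma_M(E_M)$ built from the modulus of continuity of $F''$ on compacta. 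The direct route avoids any compactness argument and keeps the threshold $\eps$ in principle quantifiable; your route is more classical and conceptually transparent, but produces a non-constructive $\eps$.

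The one place where your sketch falls short of a proof is precisely the step you yourself flag as the ``technical heart'': the non-concentration/strong convergence of the rescaled excess. In the Orlicz setting without a controlled growth hypothesis on $F''$ this is not routine --- it amounts to an $\mathcal{A}$-harmonic approximation lemma for Legendre--Hadamard forms in $L^{\varphi}$, and the known proofs of such a lemma (e.g.\ \cite{article:Dieningetal12}) themselves proceed by applying a Lipschitz truncation to a dual problem, i.e.\ exactly the machinery the paper deploys directly. So that lemma must either be cited in a precise form covering merely Legendre--Hadamard (not strongly elliptic) forms and shifted $N$-functions, or proven; as written, the proposal defers the main technical content of the theorem to it. Two smaller points: the right-hand side of your Caccioppoli inequality should read $\varphi_{1+\lvert(\nabla^k u)_{B_\varrho}\rvert}(\lvert u-P\rvert/\varrho^k)$ (your $\lvert\nabla^k u - P\rvert/\varrho$ does not typecheck, and the correct scaling in $\varrho$ matters for the iteration), and a function in $\Delta_2\cap\nabla_2$ is sandwiched between two \emph{different} powers rather than comparable to a single power-type function; this suffices for your purposes but should be stated that way.
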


Our proof largely hinges on a suitable Caccioppoli inequality and a harmonic approximation argument, which can be traced back to the works of \textsc{Giusti \& Miranda} \cite{article:GiustiMiranda68} and \textsc{Morrey} \cite{article:Morrey68} in the general variational context, which were inspired by prior works in the geometric setting. The Caccioppoli inequality will be based on of the version in \cite{article:Evans86}, which was adapted to the general growth setting in \cite{article:Dieningetal12}. For the harmonic approximation argument we will adapt a recent approach of \textsc{Gmeineder \& Kristensen} \cite{article:GmeinederKristensen19}, which uses a duality argument to directly estimate the excess in the approximation. This approach has also been surveyed in the $p$-growth setting by \textsc{B\"arlin, Gmeineder, Kristensen} and the author in \cite{article:BarlinEtAl24}.

To apply this harmonic approximation argument from \cite{article:GmeinederKristensen19} in the general growth setting, we will need to additionally use a version of the Lipschitz truncation lemma of \textsc{Acerbi \& Fusco} \cite{article:AcerbiFusco84,article:AcerbiFusco87}. This approach parallels the $\mathcal A$-harmonic approximation argument which has appeared in for instance \cite{article:DuzaarGrotowski00,article:DuzaarSteffen02,article:Kronz02,article:DuzaarMingione04,article:DieningEtAl12a}, which traces back to arguments from geometric problems and can be found in texts of \textsc{Simon} \cite{book:Simon83,book:Simon96}. In \cite{article:Dieningetal12} a direct proof of a suitable $\mathcal A$-harmonic approximation is proven by applying a Lipschitz truncation to a suitable dual problem, which closely mirrors the strategy we adopt.

Once the above $\eps$-regularity theorem is established, the following partial regularity theorem follows by standard means.

\begin{thm}[Partial regularity of minimisers]\label{thm:partial_reg}
  Let $F$ satisfy Hypotheses \ref{hyp:generalgrowth_F}, and $u \in \WW^{k,\varphi}(\Omega,\bb R^N)$ be a minimiser of (\ref{eq:functional_autonomous}), where $\Omega \subset \bb R^N$ is a bounded domain. Then there exists an open subset $\Omega_0\subset\Omega$ of full measure such that $u$ is $\CC^{k,\alpha}$ in $\Omega_0$ for each $\alpha \in (0,1).$ Moreover we have $\Omega_0 = \Omega \setminus \left(\Sigma_1 \cup \Sigma_2\right),$ where
  \begin{align}
    \Sigma_1 &= \left\{ x \in \Omega \colon \limsup_{r \to 0} \dashint_{\B_r(x)} \lvert\nabla u\rvert\,\d x = \infty \right\} \\
    \Sigma_2 &= \left\{ x \in \Omega \colon \limsup_{r \to 0} \dashint_{\B_r(x)} \varphi_{1+\lvert(\nabla u)_{\B_r(x)}\rvert}\left(\lvert \nabla u - (\nabla u)_{\B_r(x)}\rvert\right)\,\d x > 0 \right\}.
  \end{align}
\end{thm}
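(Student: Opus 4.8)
The plan is to deduce this partial regularity theorem from the $\eps$-regularity result (Theorem \ref{thm:main_epsreg}) by a routine covering and measure-theoretic argument, so the work is mostly in verifying that the complement of $\Sigma_1 \cup \Sigma_2$ is the "good set" on which the hypotheses of Theorem \ref{thm:main_epsreg} can be triggered, and that $\Sigma_1 \cup \Sigma_2$ is negligible. First I would define $\Omega_0 := \Omega \setminus (\Sigma_1 \cup \Sigma_2)$ and fix $x_0 \in \Omega_0$. Since $x_0 \notin \Sigma_1$, there is a finite constant $M_0$ with $\limsup_{r\to 0}\dashint_{B_r(x_0)} |\nabla^k u|\,\d x < M_0$, and in particular the averages $(\nabla^k u)_{B_r(x_0)}$ stay bounded, say by $M$, along a sequence $r \to 0$ (here one should note that the statement as written refers to $\nabla u$, but in the higher-order setting the natural quantities are $\nabla^k u$; I would phrase $\Sigma_1, \Sigma_2$ with $\nabla^k u$ accordingly, matching the excess appearing in Theorem \ref{thm:main_epsreg}). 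Since $x_0 \notin \Sigma_2$, the shifted excess $\dashint_{B_r(x_0)} \varphi_{1+|(\nabla^k u)_{B_r(x_0)}|}(|\nabla^k u - (\nabla^k u)_{B_r(x_0)}|)\,\d x \to 0$ as $r \to 0$. Combining these, for the $\eps = \eps(M,\alpha)$ from Theorem \ref{thm:main_epsreg} we can choose $R$ small so that $|(\nabla^k u)_{B_R(x_0)}| \le M$ and the excess is $\le \eps$; monotonicity of $\varphi_a$ in $a$ (or a comparison between $\varphi_{1+M}$ and $\varphi_{1+|(\nabla^k u)_{B_R}|}$ using the $\Delta_2$/$\nabla_2$ properties recorded in Section \ref{sec:Nfunctions}) lets us pass from the $(1+|(\nabla^k u)_{B_R}|)$-shift to the $(1+M)$-shift at the cost of a harmless constant, which can be absorbed by shrinking $\eps$. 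Then Theorem \ref{thm:main_epsreg} gives $u \in C^{k,\alpha}(B_{R/2}(x_0))$; since $\alpha \in (0,1)$ was arbitrary and $x_0 \in \Omega_0$ arbitrary, $\Omega_0$ is open and $u \in C^{k,\alpha}_{\mathrm{loc}}(\Omega_0)$ for every such $\alpha$.

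It remains to show $|\Sigma_1 \cup \Sigma_2| = 0$. For $\Sigma_1$ this is the classical Lebesgue-point / maximal function fact: since $u \in W^{k,\varphi}(\Omega,\bb R^N)$ and $\varphi$ satisfies $\Delta_2$, we have $\nabla^k u \in L^1_{\mathrm{loc}}$, and the set of points where $\limsup_{r\to 0}\dashint_{B_r(x)}|\nabla^k u|\,\d x = \infty$ has Lebesgue measure zero (a standard consequence of the weak $(1,1)$ bound for the Hardy--Littlewood maximal operator, or simply of the Lebesgue differentiation theorem). For $\Sigma_2$, at every Lebesgue point $x$ of $\nabla^k u$ we have $\dashint_{B_r(x)}|\nabla^k u - (\nabla^k u)_{B_r(x)}|\,\d x \to 0$ and $(\nabla^k u)_{B_r(x)} \to \nabla^k u(x)$; since $\varphi$ is $\Delta_2$ and $\nabla_2$ the shifted functions $\varphi_a$ satisfy $\varphi_a(t) \lesssim \varphi(t) + t$ for $t$ in bounded sets uniformly for $a$ bounded, so one gets $\dashint_{B_r(x)} \varphi_{1+|(\nabla^k u)_{B_r(x)}|}(|\nabla^k u - (\nabla^k u)_{B_r(x)}|)\,\d x \to 0$ at every such $x$ that is also not in $\Sigma_1$ (to keep the shift parameter bounded). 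Hence $\Sigma_2 \subset \Sigma_1 \cup (\text{non-Lebesgue points})$ up to a null set, and so $|\Sigma_1 \cup \Sigma_2| = 0$ and $\Omega_0$ has full measure.

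The only genuinely delicate point is the bookkeeping around the shifted $N$-functions: one must make sure that the excess quantity controlling $\Sigma_2$, which uses the shift $1+|(\nabla^k u)_{B_r(x)}|$ depending on $r$, can be compared — uniformly as $r \to 0$ once the averages are bounded — to the fixed shift $1+M$ appearing in Theorem \ref{thm:main_epsreg}, and conversely that smallness of the $\Sigma_2$-excess plus boundedness of the average forces smallness of the $(1+M)$-shifted excess. This is exactly the kind of estimate provided by the change-of-shift lemmas for $N$-functions (e.g. $\varphi_a(t) \le c\,\varphi_b(t) + c\,\varphi_b(|a-b|)$ type inequalities), which I will cite from Section \ref{sec:Nfunctions}; with those in hand the argument is entirely standard. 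I would therefore keep the proof short, essentially: fix $x_0 \in \Omega_0$, extract the needed smallness from $x_0 \notin \Sigma_1 \cup \Sigma_2$, apply Theorem \ref{thm:main_epsreg}, and then dispatch the measure of $\Sigma_1 \cup \Sigma_2$ by Lebesgue differentiation, with a remark that $\Sigma_2 \setminus \Sigma_1$ consists of non-Lebesgue points of $\nabla^k u$.
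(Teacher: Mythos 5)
Your proposal is correct and is precisely the standard argument the paper invokes (the paper offers no proof beyond the remark that the result ``follows by standard means'' from Theorem \ref{thm:main_epsreg}), and you rightly note that $\Sigma_1,\Sigma_2$ should be read with $\nabla^k u$ in place of $\nabla u$. The only point to tighten is in the $\Sigma_2$ step: to conclude $\dashint_{B_r(x)}\varphi(\lvert\nabla^k u-(\nabla^k u)_{B_r(x)}\rvert)\,\d y\to 0$ you need $x$ to be a Lebesgue point of $\nabla^k u$ in the $L^{\varphi}$ sense (almost every point is, since $\varphi(\lvert\nabla^k u\rvert)\in L^1_{\loc}$ and $\varphi\in\Delta_2$), not merely an ordinary Lebesgue point; with that, your bound $\varphi_{1+a}(t)\lesssim t+\varphi(t)$ for $a\leq M$ finishes the argument.
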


In light of the partial regularity results in the linear growth setting \cite{article:GmeinederKristensen19}, a natural question is whether the $\nabla_2$-condition is really necessary. In fact the results in \cite{article:GmeinederKristensen24} (see also \cite{article:GmeinederKristensen19a}) implies that partial regularity holds if we have merely the growth condition $\varphi(t) \leq C t^q$ with $q < \frac{n}{n-1},$ however the general case remains open, which we wish to address in future work.

\section{Preliminaries}

\subsection{Notation}\label{sec:notation}

We will briefly fix some notation which will be used throughout the text. We denote by $\bb M_k = \Sym_k(\bb R^n,\bb R^N)$ the space of symmetric $k$-linear maps $(\bb R^n)^k \to \bb R^N,$ which is equipped with the inner product $z:w = \sum_{\lvert\alpha\rvert=k} z(\mathrm e^{\alpha})\cdot w(\mathrm e^{\alpha})$ for $z,w \in \bb M_k,$ taking tensor powers of the standard orthonormal basis $\{\mathrm e_i\}_{i=1}^n$  of $\bb R^n.$ The associated norm is denoted $\lvert z\rvert = \sqrt{z : z}.$ Note that we identify $\bb M_0 = \mathbb R^N$ as vectors in $\mathbb R^N$ and $\bb M_1 = \bb R^{Nn}$ as the space of $N \times n$ matrices. If $u \colon \Omega \to \bb R^N$ is $k$-times continuously differentiable where $\Omega \subset \bb R^n$ is an open set, we denote the partial derivatives of $u$ by $\D^{\alpha}u$ using multi-index notation, and its $k$\th order gradient by $\nabla^k u \colon \Omega \to \mathbb M_k$, given by $\nabla^ku(\mathrm e^{\alpha}) = \D^{\alpha} u$. The same notation will be used for weak derivatives.

We will equip $\bb R^n$ with the Lebesgue measure $\mathcal L^n,$ and if $A \subset \bb R^n$ is non-empty and open such that $0<\mathcal L^n(A)<\infty$, for any $f \in \LL^1(A,\bb V)$ with $(\bb V,\lvert\,\cdot\,\rvert)$ a finite-dimensional real vector space, we define
\begin{equation}
  (f)_A := \dashint_A f \,\d x := \frac1{\mathcal L^n(A)} \int_A f \,\d x.
\end{equation}
We also denote by a $\B_R(x_0)$ the open ball in $\bb R^n$ centred at $x_0$ with radius $R.$

For a differentiable map $F \colon \bb M_k \to \bb R$ we define its derivative $F' \colon \bb M_k \to \bb M_k$ as 
\begin{equation}
  F'(z)w = \left.\frac{\d}{\d t}\right|_{t=0} F(z+tw),
\end{equation}
and if $F$ is $\CC^2,$ its second derivative $F''(z)$ will be a linear map $\bb M_k \to \bb M_k$ satisfying
\begin{equation}
  F''(z)v : w = \left.\frac{\d}{\d t}\right|_{t=0} F'(z+tv) w.
\end{equation}
This can be viewed as a symmetric bilinear form on $\bb M_k.$

Additionally $C, C_1, C_2,\dots$ will denote constants which may change from line to line, and if not specified will depend only on the parameters the resulting estimate depends on. We may also write $C_{\alpha,\beta,\dots}$ to emphasise the dependence on certain parameters. We also write $A \sim B$ if there exists constants $C_1,C_2>0$ such that $C_1A \leq B \leq C_2A.$

\subsection{$N$-functions}\label{sec:Nfunctions}

We will define the scales of growth we are interested in, and record some basic properties. These results can be found for instance in \cite{article:KrasnoselskiiRutickii61,article:RaoRen91,book:AdamsFournier03}.

We say $\varphi \colon [0,\infty) \to [0,\infty)$ is an \emph{$N$-function} if $\varphi$ is increasing, continuous and convex such that $\varphi(t)=0$ if and only if $t=0$ and we have the limits
\begin{equation}\label{eq:phi_taillimits}
  \lim_{t \to 0} \frac{\varphi(t)}{t} = 0, \quad \lim_{t \to \infty} \frac{\varphi(t)}t = \infty
\end{equation} 
hold true.
Given such a $\varphi$, define its \emph{conjugate function} by
\begin{equation}\label{eq:conjugate_defn}
  \varphi^*(s) = \int_0^s (\varphi')^{-1}(\sigma)\,\d \sigma = \int_0^s\inf\{\tau > 0 \colon \varphi'(\tau) > \sigma\}\,\d \sigma.
\end{equation} 

We say an $N$-function $\varphi$ satisfies the \emph{$\Delta_2$-condition} if there is $C \geq 1$ for which $\varphi(2t) \leq C\varphi(t)$ holds for all $t\geq 0,$ and the least such $C$ will be denoted $\Delta_2(\varphi).$ We also say $\varphi$ satisfies the \emph{$\nabla_2$-condition} if its conjugate $\varphi^*$ satisfies the $\Delta_2$-condition, and write $\nabla_2(\varphi) = \Delta_2(\varphi^*).$ We will use the notation $\varphi \in \Delta_2,$ $\varphi \in \nabla_2$ to denote $N$-functions satisfying the $\Delta_2$ and $\nabla_2$ conditions respectively, and write $\varphi \in \Delta_2\cap\nabla_2$ if both are satisfied.

Note that if $\varphi \in \Delta_2,$ then there is $p>1$ such that $\varphi(st) \leq s^{p} \varphi(t)$ for all $t > 0, s>1.$ The minimal such $p$ will be denoted by $p_{\varphi}.$ 

\begin{lem}
  The following inequalities hold for an $N$-function $\varphi.$

  \begin{enumerate}[label=(\alph*)]
    \item (Young's inequality) For $t,s \geq 0$ we have
      \begin{equation*}\label{eq:young_inequality}
        ts \leq \varphi(t) + \varphi^*(s),
      \end{equation*} 
      with equality if and only if $s= \varphi'(t)$ or $t=(\varphi^*)'(s).$

    \item For any $t \geq 0$ we have
      \begin{equation*}\label{eq:varphi_inverse}
        t \leq \varphi(t)^{-1}(\varphi^*)^{-1}(t) \leq 2t.
      \end{equation*} 
    \item If $\varphi \in \Delta_2 \cap \nabla_2$ we have the equivalences 
      \begin{equation*}\label{eq:varphi_equivalence}
        \varphi(t) \sim t \varphi'(t) \sim \varphi^*(\varphi'(t))  \sim \varphi^*(\varphi(t)/t).
      \end{equation*}
  \end{enumerate}
\end{lem}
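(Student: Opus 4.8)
These three assertions are classical; I sketch the route I would take, referring to \cite{article:KrasnoselskiiRutickii61,article:RaoRen91} for the underlying facts about $N$-functions. \emph{Part (a)} is the Fenchel--Young inequality for the Legendre pair $\varphi,\varphi^*$. I would argue geometrically: since $\varphi(t)=\int_0^t\varphi'$ and, by \eqref{eq:conjugate_defn}, $\varphi^*(s)=\int_0^s(\varphi')^{-1}$, the numbers $\varphi(t)$ and $\varphi^*(s)$ are the areas of the two regions into which the graph of the nondecreasing (possibly discontinuous) map $\varphi'$ divides the rectangle $[0,t]\times[0,s]$, plus the leftover pieces; comparing areas gives $ts\le\varphi(t)+\varphi^*(s)$, with equality precisely when $(t,s)$ lies on that graph, i.e.\ $s=\varphi'(t)$, equivalently $t=(\varphi^*)'(s)$. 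Equivalently one may work directly from $\varphi^*(s)=\sup_{\tau\ge0}\bigl(s\tau-\varphi(\tau)\bigr)$.

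\emph{Part (b).} Both $\varphi$ and $\varphi^*$ are strictly increasing (convexity together with $\varphi(t)=0\iff t=0$ excludes flat pieces), so $\varphi^{-1}$ and $(\varphi^*)^{-1}$ are unambiguous; write $a=\varphi^{-1}(t)$ and $b=(\varphi^*)^{-1}(t)$, so $\varphi(a)=\varphi^*(b)=t$. The upper bound is immediate from (a): $ab\le\varphi(a)+\varphi^*(b)=2t$. For the lower bound it suffices to show $ab\ge t$, i.e.\ $b\ge\varphi(a)/a$; since $\varphi^*$ is increasing this follows once we know $\varphi^*(\varphi(a)/a)\le\varphi(a)=\varphi^*(b)$. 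From the variational formula, $\varphi^*(\varphi(a)/a)=\sup_{\tau\ge0}\bigl(\tau\varphi(a)/a-\varphi(\tau)\bigr)$: for $\tau\le a$ the bracket is $\le\tau\varphi(a)/a\le\varphi(a)$, while for $\tau\ge a$ convexity of $\varphi$ through the origin gives $\varphi(\tau)\ge\tau\varphi(a)/a$, so the bracket is $\le0$. Hence the supremum is $\le\varphi(a)$, as needed.

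\emph{Part (c)} is where $\Delta_2\cap\nabla_2$ enters. I would first show $\varphi(t)\sim t\varphi'(t)$: the bound $\varphi(t)=\int_0^t\varphi'\le t\varphi'(t)$ uses only monotonicity of $\varphi'$, while $t\varphi'(t)\le\int_t^{2t}\varphi'=\varphi(2t)-\varphi(t)\le\varphi(2t)\le\Delta_2(\varphi)\varphi(t)$ gives the reverse. Since $\varphi\in\nabla_2$ means $\varphi^*\in\Delta_2$, the same two inequalities applied to $\varphi^*$ give $\varphi^*(s)\sim s(\varphi^*)'(s)$. Now I use $(\varphi^*)'=(\varphi')^{-1}$ from \eqref{eq:conjugate_defn} together with the equality case of (a), namely $t\varphi'(t)=\varphi(t)+\varphi^*(\varphi'(t))$: this gives $\varphi^*(\varphi'(t))\le t\varphi'(t)$ directly, and for the reverse I insert $s=\varphi'(t)$ into $s(\varphi^*)'(s)\le\Delta_2(\varphi^*)\varphi^*(s)$ and use $(\varphi^*)'(\varphi'(t))\ge t$ to obtain $t\varphi'(t)\le\Delta_2(\varphi^*)\bigl(t\varphi'(t)-\varphi(t)\bigr)$, hence $\varphi^*(\varphi'(t))=t\varphi'(t)-\varphi(t)\ge\tfrac1{\Delta_2(\varphi^*)}\,t\varphi'(t)$. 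So $\varphi^*(\varphi'(t))\sim t\varphi'(t)\sim\varphi(t)$. Finally the first step also gives $\varphi(t)/t\sim\varphi'(t)$, and since $\varphi^*\in\Delta_2$ comparable arguments have comparable values under $\varphi^*$, whence $\varphi^*(\varphi(t)/t)\sim\varphi^*(\varphi'(t))$; chaining the comparabilities completes the proof.

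The one delicate point — and the only place I expect genuine work rather than bookkeeping — is the interaction with the nonsmoothness of $\varphi$: as an $N$-function it need only be convex, so $\varphi'$ should be read as, say, the right derivative and $(\varphi')^{-1}$ as the generalised inverse in \eqref{eq:conjugate_defn}. One must then verify $(\varphi^*)'(\varphi'(t))\ge t$ (which is all the argument uses) and that the equality case of Young's inequality survives these conventions; both follow from $\varphi'(t)$ lying in the subdifferential of $\varphi$ at $t$. Among the comparabilities in (c) the only nontrivial one is the strict inequality $\varphi(t)\le(1-c)\,t\varphi'(t)$ for some $c\in(0,1)$, equivalently $t\varphi'(t)-\varphi(t)$ being comparable to $t\varphi'(t)$ — this is precisely where the $\nabla_2$-condition is indispensable (for $\varphi$ merely in $\Delta_2$ one can have $\varphi(t)$ essentially equal to $t\varphi'(t)$).
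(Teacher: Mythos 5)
Your proof is correct. Note, though, that the paper does not actually prove this lemma: it cites \textsc{Rao \& Ren} for (a) and (b) (Theorem I.3 and Proposition II.1(ii) there) and merely remarks that (c) "follows by convexity of $\varphi$ and (b)". So your write-up supplies a self-contained argument where the paper defers to the literature. Your parts (a) and (b) are the standard arguments one finds in the cited references (the area/Fenchel picture for Young's inequality, and the variational formula $\varphi^*(s)=\sup_\tau(s\tau-\varphi(\tau))$ for the two-sided bound on $\varphi^{-1}(t)(\varphi^*)^{-1}(t)$). For (c) your route differs slightly from the one the paper hints at: the paper would substitute $t=\varphi(u)$ into (b) to get $\varphi^*(\varphi(u)/u)\sim\varphi(u)$ and then compare $\varphi(t)/t$ with $\varphi'(t)$ by convexity and $\Delta_2$, whereas you go through the equality case of Young's inequality, $t\varphi'(t)=\varphi(t)+\varphi^*(\varphi'(t))$, and extract the lower bound $\varphi^*(\varphi'(t))\gtrsim t\varphi'(t)$ from $\Delta_2(\varphi^*)$ together with $(\varphi^*)'(\varphi'(t))\ge t$. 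Both are valid and isolate the role of $\nabla_2$ in the same place (namely in making $t\varphi'(t)-\varphi(t)$ comparable to $t\varphi'(t)$); your closing remarks on generalised inverses and the nonsmooth case are an accurate account of the only genuinely delicate points. One cosmetic quibble: in (a) the equality condition is stated in the paper as "$s=\varphi'(t)$ \emph{or} $t=(\varphi^*)'(s)$" precisely because, with the one-sided derivative conventions, the two conditions need not coincide pointwise; your "equivalently" should be read with that caveat, which you do acknowledge at the end.
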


We refer the reader to \cite{article:RaoRen91} for a proof; note that \ref{eq:young_inequality} and \ref{eq:varphi_inverse} are shown in Theorem I.3 and Proposition II.1(ii) respectively, and \ref{eq:varphi_equivalence} follows by convexity of $\varphi$ and \ref{eq:varphi_inverse}.

In particular \ref{eq:young_inequality} implies the equivalent definition
\begin{equation}\label{eq:conjugate_equiv}
  \varphi^*(s) = \sup_{t>0} \left(ts - \varphi(t)\right).
\end{equation} 
Also for any $\delta>0$, applying \ref{eq:young_inequality} with $s/\delta$ in place of $s$, we have
\begin{equation}\label{eq:delta_young}
  ts \leq \delta\varphi(t) + \delta \varphi^*(s/\delta).
\end{equation} 

If $\varphi$ is an $N$-function and $a > 0,$ following \cite{article:BalciEtAl20} we also introduce the \emph{shifted $N$-function}
\begin{equation}\label{eq:varphi_shifted}
  \varphi_a(t) = \int_0^t \frac{\tau \varphi'(\max\{a,\tau\})}{\max\{a,\tau\}} \,\d \tau.
\end{equation} 
Note that $\varphi_a(t) \sim t^2$ if $t \leq a$ and $\varphi_a(t) \sim \varphi(t)$ if $t\geq a$ (where the constant depends on $a$) and using (\ref{eq:conjugate_defn}) we have $(\varphi_a)^* = (\varphi^*)_{\varphi'(a)}.$ Further if $\varphi \in \Delta_2\cap\nabla_2,$ then the same holds for each $\varphi_a$ with $1 \leq \Delta_2(\varphi_a) \leq \Delta_2(\varphi),$ $1 \leq \nabla_2(\varphi_a) \leq \nabla_2(\varphi)$ for all $a>0.$ Also if $\varphi \in \Delta_2,$ then for each $M>0$ we have $\varphi_{1+a} \sim \varphi_{1+M}$ for all $0 \leq a \leq M$ (where the constant depends on $M, \Delta_2(\varphi)$).

\subsection{Orlicz-Sobolev spaces}

We will also define the natural function spaces associated to the functional (\ref{eq:functional_autonomous}), and establish some basic properties that we will use later.

\begin{defn}
  For an open set $\Omega \subset \bb R^n,$ a finite-dimensional normed space $(\bb V,\lvert\,\cdot\,\rvert)$ and $\varphi \in \Delta_2,$ we define the \emph{Orlicz space} $\LL^{\varphi}(\Omega,\bb V)$ as the space of $f \in \LL^1_{\loc}(\Omega,\bb V)$ for which
\begin{equation}
  \rho_{\Omega}^{\varphi}(\lvert f\rvert) := \int_{\Omega} \varphi(\lvert f \rvert) \,\d x < \infty,
\end{equation} 
which we equip with the \emph{Luxemburg norm}
\begin{equation}
  \norm{f}_{\LL^{\varphi}(\Omega,\bb V)} = \inf\left\{ \lambda>0 \colon \rho_{\Omega}^{\varphi}\left( \frac{\lvert f\rvert}{\lambda} \right) \leq 1 \right\}.
\end{equation} 
\end{defn}
Note that the $\Delta_2$-condition ensures that $\LL^{\varphi}(\Omega,\bb V)$ as defined is a linear space, and that $f \in \LL^1_{\loc}(\Omega,\bb V)$ lies in $\LL^{\varphi}(\Omega,\bb V)$ if and only if $\norm{f}_{\LL^{\varphi}(\Omega,\bb V)} < \infty.$ Also if $\Omega$ is bounded, we have $\LL^{\varphi}(\Omega,\bb V) = \LL^{\varphi_{a}}(\Omega,\bb V)$ for all $a>0,$ with $\varphi_a$ defined as in (\ref{eq:varphi_shifted}).

Given this, for $N, k, \ell \geq 1$ we can define the \emph{Orlicz-Sobolev spaces} $\WW^{k,\varphi}(\Omega,\bb M_{\ell})$ as the space of $k$-times weakly differentiable mappings $u \in \LL^{\varphi}(\Omega,\bb M_{\ell})$ such that $\nabla^j u \in \LL^{\varphi}(\Omega,\bb M_{\ell+j})$ for all $1 \leq j \leq k.$ We equip this space with the norm
\begin{equation}
  \norm{u}_{\WW^{k,\varphi}(\Omega,\bb M_{\ell})} = \sum_{j=0}^{k} \norm{\nabla^ju}_{\LL^{\varphi}(\Omega,\bb M_{\ell+j})}.
\end{equation} 

We also define $\WW^{k,\varphi}_0(\Omega,\bb M_{\ell})$ to be the closure of $\CC^{\infty}_c(\Omega,\bb M_{\ell})$ with respect to $\norm{\cdot}_{\WW^{k,\varphi}(\Omega,\bb M_{\ell})},$ and $\WW^{k,\varphi}_{\loc}(\Omega,\bb M_{\ell})$ to be the space of $u \in \WW^{k,1}_{\loc}(\Omega,\bb M_{\ell})$ such that the restriction $u\rvert_{\Omega'}$ lies in $\WW^{k,\varphi}(\Omega',\bb M_{\ell})$ for each compactly contained domain $\Omega' \Subset \Omega.$

These Orlicz-Sobolev spaces enjoy many of the familiar properties satisfied by the standard Sobolev spaces; see for instance \cite[Section 8]{book:AdamsFournier03}. We will record some specific results we need here.
This first result we need is the following Poincar\'e-Sobolev inequality, which is far from sharp, but will suffice for our purposes (for a sharp statement see \cite{article:Cianchi96}).

\begin{lem}\label{lem:poincare_sobolev}
  Let $\varphi \in \Delta_2$ and $\ell \geq 1$, then if $u \in \WW^{1,\varphi}(\Omega,\bb M_{\ell})$ we have $u \in \LL^{\varphi^p}_{\loc}(\Omega,\bb M_{\ell})$ for each $1 \leq p \leq \frac{n}{n-1}.$ Moreover for any $\B_R(x_0) \subset \Omega$ we have
  \begin{equation}
    \left( \dashint_{\B_R(x_0)} \varphi\left( \frac{\lvert u - (u)_{\B_R(x_0)}\rvert}{R} \right)^p \,\d x \right)^{\frac1p} \leq C\,\dashint_{\B_R(x_0)} \varphi(\lvert\nabla u\rvert)\,\d x,
  \end{equation} 
  where $C=C(n,\Delta_2(\varphi))>0.$ If $u \in \WW^{1,\varphi}_0(\B_R(x_0),\bb M_{\ell}),$ the same holds without subtracting the average.
\end{lem}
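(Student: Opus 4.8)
The plan is to combine a plain Orlicz--Poincar\'e inequality with the classical Sobolev embedding $W^{1,1}\embeds L^{n/(n-1)}$, the latter applied to a composition of $\varphi$ with $u$. Since $\varphi \in \Delta_2$ one checks that $\varphi^p$ is again an $N$-function in $\Delta_2$; moreover, as the normalised measure on $B_R(x_0)$ is a probability measure, the monotonicity of $L^p$-norms on a probability space (a consequence of Jensen's inequality) gives $\big(\dashint_{B_R(x_0)} h^p\,\d x\big)^{1/p} \le \big(\dashint_{B_R(x_0)} h^{n/(n-1)}\,\d x\big)^{(n-1)/n}$ for $1 \le p \le \frac{n}{n-1}$ and $h \ge 0$. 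Hence it suffices to prove the displayed inequality for the endpoint $p = \frac{n}{n-1}$ on an arbitrary ball $B_R := B_R(x_0) \subset \Omega$; the local membership $u \in L^{\varphi^p}_{\loc}$ then follows by covering a compact subset of $\Omega$ by finitely many balls together with $|u| \le |u - (u)_{B_R}| + |(u)_{B_R}|$ and the $\Delta_2$-condition. We may assume $x_0 = 0$ and set $w := u - (u)_{B_R}$; note $u \in W^{1,1}(B_R)$ since $\varphi$ grows at least linearly at infinity.

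\emph{Step 1: an Orlicz--Poincar\'e inequality.} First I would establish
\begin{equation*}
  \dashint_{B_R} \varphi\Big(\frac{|w|}{R}\Big)\,\d x \le C(n,\Delta_2(\varphi)) \dashint_{B_R} \varphi(|\nabla u|)\,\d x.
\end{equation*}
This rests on the standard pointwise bound $|w(x)| \le c_n \int_{B_R} |\nabla u(y)|\,|x-y|^{1-n}\,\d y$ valid for $w \in W^{1,1}(B_R)$. Since $B_R \subset B_{2R}(x)$ for every $x \in B_R$, the right-hand side divided by $R$ equals a dimensional constant times the average of $|\nabla u|$ against the probability measure $\mu_x = \kappa^{-1}|x-y|^{1-n}\chi_{B_{2R}(x)}\,\d y$, where $\kappa = \kappa(n)R \sim R$; Jensen's inequality for the convex $\varphi$ together with $\Delta_2$ (absorbing the fixed constant via $\varphi(Ct)\le C'(C,\Delta_2(\varphi))\varphi(t)$) yields $\varphi(|w(x)|/R) \le C\,\kappa^{-1}\int_{B_R}\varphi(|\nabla u(y)|)\,|x-y|^{1-n}\,\d y$. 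Averaging over $x \in B_R$, interchanging the order of integration, and using $\int_{B_R}|x-y|^{1-n}\,\d x \le \kappa$ gives the claim.

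\emph{Step 2: Sobolev improvement.} Put $f := R\,\varphi(|w|/R)$. I would justify that $f \in W^{1,1}(B_R)$ with $|\nabla f| \le \varphi'(|w|/R)\,|\nabla u|$ almost everywhere --- this is the chain rule for a Sobolev function composed with the locally Lipschitz function $\varphi$, obtained by first composing with bounded Lipschitz truncations of $\varphi$ and passing to the limit, the $\Delta_2$-condition furnishing the dominating functions (integrability of the right-hand side follows from Step 1 and the bound below). Applying the scaled Sobolev--Poincar\'e inequality $\big(\dashint_{B_R}|f - (f)_{B_R}|^{n/(n-1)}\,\d x\big)^{(n-1)/n} \le C(n)R\dashint_{B_R}|\nabla f|\,\d x$ together with the triangle inequality,
\begin{equation*}
  \Big(\dashint_{B_R} f^{n/(n-1)}\,\d x\Big)^{(n-1)/n} \le \dashint_{B_R} f\,\d x + C(n)R\dashint_{B_R}|\nabla f|\,\d x .
\end{equation*}
For the gradient term I use the elementary estimate $t\varphi'(t) \le \varphi(2t)-\varphi(t) \le \Delta_2(\varphi)\varphi(t)$ (convexity of $\varphi$), whence $\varphi^*(\varphi'(t)) \le t\varphi'(t) \le \Delta_2(\varphi)\varphi(t)$, so that Young's inequality gives $\varphi'(|w|/R)|\nabla u| \le \Delta_2(\varphi)\varphi(|w|/R) + \varphi(|\nabla u|)$. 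Combined with Step 1, both $\dashint_{B_R} f\,\d x = R\dashint_{B_R}\varphi(|w|/R)\,\d x$ and $R\dashint_{B_R}|\nabla f|\,\d x$ are bounded by $C(n,\Delta_2(\varphi))\,R\dashint_{B_R}\varphi(|\nabla u|)\,\d x$; dividing by $R$ yields the endpoint estimate, and the case of general $p$ follows from the Jensen step above.

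\emph{Step 3: the $W^{1,\varphi}_0$ case and the main obstacle.} For $u \in W^{1,\varphi}_0(B_R)$ one extends $u$ by zero and repeats the argument with the zero-average versions, namely $|u(x)| \le c_n\int_{B_R}|\nabla u(y)|\,|x-y|^{1-n}\,\d y$ and $\|f\|_{L^{n/(n-1)}(B_R)} \le C(n)\|\nabla f\|_{L^1(B_R)}$ for $f \in W^{1,1}_0(B_R)$, so that no average term appears. I expect the chain rule of Step 2 --- establishing $f = R\varphi(|w|/R)\in W^{1,1}(B_R)$ and the pointwise gradient bound in spite of $\varphi'$ being unbounded --- to be the only genuinely delicate point; the remainder is a bookkeeping of constants depending only on $n$ and $\Delta_2(\varphi)$.
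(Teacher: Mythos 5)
Your proposal is correct and takes essentially the same route as the paper: the $p=1$ case via the Riesz potential and Jensen's inequality (which the paper simply cites from Bhattacharya--Leonetti), followed by showing that $\varphi$ composed with $|u-(u)_{B_R}|$ lies in $W^{1,1}$ using the chain rule together with Young's inequality and $\varphi^*(\varphi'(t))\leq C\varphi(t)$, and then applying the Gagliardo--Nirenberg/Sobolev embedding to upgrade to exponent $\frac{n}{n-1}$. The only difference is presentational: you carry the radius $R$ through explicitly and prove directly the two ingredients (the $p=1$ estimate and the $W^{1,\varphi}_0$ case) that the paper outsources to references.
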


\begin{proof}
  We will establish the result for $R=1,$ from which the general case follows by rescaling. The result for $p=1$ follows by a standard application of the Riesz potential, as is shown in \cite{article:BhattacharyaLeonetti91} (see also \cite{article:DieningEttwein08}). For general $1 \leq p \leq \frac{n}{n-1}$ assume that $(u)_{\B_1(x_0)}=0$ by translation, and noting that $\lvert\nabla(\varphi(\lvert u \rvert))\rvert = \varphi'(\lvert u\rvert)\lvert\nabla u\rvert$ we can apply Young's inequality to bound
  \begin{equation}
    \begin{split}
      \dashint_{\B_1(x_0)} \left\lvert \nabla(\varphi(\lvert u\rvert))\right\rvert \,\d x 
      &\leq \dashint_{\B_1(x_0)} \varphi^*(\varphi'(\lvert u \rvert)) + \varphi(\lvert \nabla u\rvert) \,\d x \\
      &\leq \dashint_{\B_1(x_0)} \varphi(2\lvert u\rvert) + \varphi(\lvert \nabla u\rvert) \,\d x.
    \end{split}
  \end{equation} 
  From here we conclude that $\varphi(\lvert u\rvert) \in \WW^{1,1}(\B_1(x_0))$ using the $\Delta_2$-condition and the $p=1$ case, and so by the Gagliardo-Nirenberg inequality we have $\varphi(\lvert u\rvert) \in \LL^p(\B_1(x_0))$ for each $1 \leq p \leq \frac{n}{n-1}$ with the associated estimate
  \begin{equation}
    \left(\dashint_{\B_1(x_0)} \varphi(\lvert u\rvert)^p \,\d x \right)^{\frac1p} \leq C\,\dashint_{\B_1(x_0)} \lvert\nabla\varphi(\lvert u\rvert)\rvert \,\d x \leq C\,\dashint_{\B_1} \varphi(\lvert\nabla u\rvert) \,\d x,
  \end{equation} 
  as required. The $\WW^{1,\varphi}_0(\Omega;\mathbb M_{\ell})$ case can be found in \cite[Proposition 4.1]{article:ChlebickaEtAl19}.
\end{proof}

\begin{rem}\label{rem:higher_poincaresobolev}
  Applying this iteratively with $p=1$, we deduce for all $0 \leq j \leq k-1$ that
  \begin{equation}\label{eq:higher_poincaresobolev}
    \dashint_{\B_R(x_0)} \varphi\left(\frac{\lvert\nabla^j( u - a_{x_0,R})\rvert}{R^{k-j}}\right) \,\d x \leq C\,\dashint_{\B_R(x_0)} \varphi\left( \lvert\nabla^ku\rvert \right) \,\d x,
  \end{equation} 
  where $a_{x_0,R}$ is the unique polynomial of degree at most $k-1$ satisfying
  \begin{equation}
    \dashint_{\B_R(x_0)} \D^{\alpha}(u - a_{x_0,R}) \,\d x =0
  \end{equation} 
  for all $\lvert\alpha\rvert\leq k.$ 
  Similarly as in the $k=1$ case, we can omit the $a_{j,u}$ term if $u \in \WW^{k,\varphi}_0(\B_R(x_0),\bb M_{\ell})$.
\end{rem}

We will also need some results for affine functions in Section \ref{sec:local_min}, where we consider strong local minimisers. These can be deduced by combining the results in \cite[Lemmas 2.2, 2.3]{article:Habermann13} and \cite[Lemma 2]{article:Kronz02}.

\begin{lem}\label{lem:affine_approx}
  Let $\B_R(x_0)\subset \bb R^n$ be a ball, $N,\ell \geq 1,$ and $\varphi \in \Delta_2.$ Then if $u \in \WW^{1,\varphi}(\B_R(x_0),\bb M_{\ell})$, we have the affine function $A_{x_0,R} \colon \bb R^n \to \bb M_{\ell}$ defined to satisfy
  \begin{equation}
    A(x_0) =\dashint_{\B_R(x_0)} u\,\d x, \quad \nabla A = \frac{n+2}{R^2} \dashint_{\B_R(x_0)} u(x) \tensor (x-x_0)\,\d x
  \end{equation} 
  satisfies
  \begin{equation}\label{eq:affine_quasiminima}
    \dashint_{\B_R(x_0)} \varphi(\lvert u - A_{x_0,R}\rvert) \,\d x \leq C\, \dashint_{\B_R(x_0)} \varphi(\lvert u - A\rvert) \,\d x
  \end{equation} 
  for any other $A \colon \bb R^n \to \bb M_{\ell}$ affine, and we also have the estimates
  \begin{align}
    \varphi\left(\lvert \nabla A_{x_0,R} - (\nabla u)_{\B_R(x_0)}\rvert\right) &\leq C\,\dashint_{\B_R(x_0)} \varphi\left( \lvert \nabla u - (\nabla u)_{\B_R(x_0)}\rvert\right)  \,\d x.\label{eq:affine_derivative}\\
    \varphi\left(\lvert \nabla A_{x_0,R} - \nabla A_{x_0,\sigma R}\rvert\right) &\leq C\,\dashint_{\B_{\sigma R}(x_0)} \varphi\left( \frac{\lvert u - A_{x_0,R}\rvert}{\sigma R}\right)  \,\d x,\label{eq:affine_derivative2}
  \end{align}
  for all $\sigma \in (0,1).$
\end{lem}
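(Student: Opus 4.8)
The plan is to reduce all three estimates to elementary bounds for affine functions on a single ball, exploiting that the map $\Pi\colon u \mapsto A_{x_0,R}$ is linear in $u$, reproduces affine functions, and satisfies a strong pointwise bound. First I would normalise: after translating and applying the appropriate dilation --- $u \mapsto u(x_0 + R\,\cdot\,)$ for \eqref{eq:affine_quasiminima} and $u \mapsto R^{-1}u(x_0 + R\,\cdot\,)$ for \eqref{eq:affine_derivative}, \eqref{eq:affine_derivative2} --- we may assume $x_0 = 0$ and $R = 1$, each inequality being invariant under the corresponding change. The moment identity $\dashint_{B_1} x \tensor x\,\d x = \tfrac{1}{n+2}\Id$ then shows directly that $\Pi$ is linear, that $\Pi A = A$ for every affine $A\colon \bb R^n \to \bb M_{\ell}$, and --- writing $\Pi u(x) = (u)_{B_1} + (n+2)\bigl(\dashint_{B_1} u(y)\tensor y\,\d y\bigr)x$ and using $\lvert x\rvert,\lvert y\rvert \le 1$ on $B_1$ --- that $\sup_{B_1}\lvert\Pi u\rvert \le C_n\dashint_{B_1}\lvert u\rvert\,\d x$. (In passing, $\Pi$ is the $L^2(B_1)$-orthogonal projection onto affine maps, which is where the constant $n+2$ comes from; the $L^2$-minimality itself will not be needed.) Throughout I would use the same device to convert linear bounds into Orlicz bounds: since $\varphi$ is convex with $\varphi \in \Delta_2$, one has $\varphi(ct) \le C(c,\Delta_2(\varphi))\,\varphi(t)$ for any fixed $c>0$, and together with Jensen this gives $\varphi\bigl(c\dashint_B \lvert g\rvert\,\d x\bigr) \le C\dashint_B\varphi(\lvert g\rvert)\,\d x$ on every ball $B$.

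For \eqref{eq:affine_quasiminima}, given affine $A$ I would write $u - A_{0,1} = (u-A) - \Pi(u-A)$ (using linearity and $\Pi A = A$), so that by convexity $\varphi(\lvert u - A_{0,1}\rvert) \le \tfrac{1}{2}\varphi(2\lvert u - A\rvert) + \tfrac{1}{2}\varphi(2\lvert\Pi(u-A)\rvert)$ pointwise. Averaging, the first term is $\le C\dashint_{B_1}\varphi(\lvert u-A\rvert)$ by $\Delta_2$, while $\lvert\Pi(u-A)\rvert \le C_n\dashint_{B_1}\lvert u-A\rvert$ is constant, so the device above bounds the second term by $C\dashint_{B_1}\varphi(\lvert u-A\rvert)$ as well; adding gives the claim.

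For \eqref{eq:affine_derivative}, set $\ell(x) := (\nabla u)_{B_1}x$. Since $\Pi\ell = \ell$, linearity gives $\nabla A_{0,1} - (\nabla u)_{B_1} = \nabla\Pi(u-\ell)$, and $v := u - \ell$ has $\nabla v = \nabla u - (\nabla u)_{B_1}$. I would then integrate by parts against $g(x) := \tfrac{1}{2}(\lvert x\rvert^2 - 1)$, which satisfies $\nabla g = x$, vanishes on $\partial B_1$, and has $\lvert g\rvert \le \tfrac{1}{2}$ on $B_1$: componentwise $\dashint_{B_1} v_i x_j\,\d x = \dashint_{B_1} v_i\,\partial_j g\,\d x = -\dashint_{B_1}(\partial_j v_i)\,g\,\d x$, whence $\lvert\nabla\Pi v\rvert = (n+2)\bigl\lvert\dashint_{B_1} v\tensor x\,\d x\bigr\rvert \le C_n\dashint_{B_1}\lvert\nabla v\rvert\,\d x$. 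Thus $\lvert\nabla A_{0,1} - (\nabla u)_{B_1}\rvert \le C_n\dashint_{B_1}\lvert\nabla u - (\nabla u)_{B_1}\rvert\,\d x$, and applying $\varphi$ and the device above yields \eqref{eq:affine_derivative}.

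Finally \eqref{eq:affine_derivative2} is the most direct: as $A_{0,\sigma}$ reproduces the affine map $A_{0,1}[u]$, linearity gives $\nabla A_{0,1} - \nabla A_{0,\sigma} = \nabla A_{0,\sigma}[u - A_{0,1}]$, and bounding $\lvert z\rvert \le \sigma$ in $\nabla A_{0,\sigma}[w] = \tfrac{n+2}{\sigma^2}\dashint_{B_\sigma} w \tensor z\,\d z$ gives $\lvert\nabla A_{0,1} - \nabla A_{0,\sigma}\rvert \le C_n\dashint_{B_\sigma}\tfrac{\lvert u - A_{0,1}\rvert}{\sigma}\,\d z$; once more $\varphi$, $\Delta_2$ and Jensen finish it (after undoing the dilation the $\sigma$ becomes $\sigma R$). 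The whole argument is essentially bookkeeping, but one step warrants care: \eqref{eq:affine_derivative}, where the auxiliary function must be chosen so that the boundary term genuinely vanishes and the resulting bound is scale-invariant --- and, relatedly, one should check that the three different dilations used really do leave \eqref{eq:affine_quasiminima}--\eqref{eq:affine_derivative2} unchanged.
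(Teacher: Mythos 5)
Your argument is correct, and it is a faithful self-contained reconstruction of what the paper merely cites: the text gives no proof of this lemma, deferring instead to \cite[Lemmas 2.2, 2.3]{article:Habermann13} and \cite[Lemma 2]{article:Kronz02}, whose proofs run along exactly the lines you describe (the operator $\Pi$ is the $L^2(B_1)$-orthogonal projection onto affine maps, whence the constant $n+2$; quasi-minimality \eqref{eq:affine_quasiminima} follows from $\Pi A=A$, the pointwise bound $\sup_{B_1}\lvert\Pi v\rvert\le C_n\dashint_{B_1}\lvert v\rvert$, Jensen and $\Delta_2$; and \eqref{eq:affine_derivative} uses precisely the integration by parts against $g(x)=\tfrac12(\lvert x\rvert^2-1)$ to trade $\dashint v\tensor x$ for $\dashint \nabla v\, g$). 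All three scaling reductions you flag do check out, since $\nabla A_{0,1}[R^{-1}u(x_0+R\,\cdot\,)]=\nabla A_{x_0,R}[u]$ by the change of variables in the moment integral. The only cosmetic point is that the normalisation $A_{x_0,R}(x_0)=(u)_{B_R(x_0)}$ (as you implicitly use) is the intended reading of the displayed definition.
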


Finally we record a interpolation estimate in the Orlicz scales, which is a straightforward adaption of the $\WW^{k,p}$ case (compare with the results in \cite[Section 5]{book:AdamsFournier03}).

\begin{lem}\label{lem:derivative_interpolation}
  Let $\varphi \in \Delta_2$ and $k \geq 0.$ Then there exists $C=C(n,k,\Delta_2(\varphi))>0$ such that for all $x_0 \in \bb R^n,$ $R>0$ and $u \in \WW^{k,\varphi}(\B_R(x_0),\bb M_{\ell}),$ we have the estimate
  \begin{equation}
    \int_{\B_R(x_0)} \varphi(\lvert\nabla^ju\rvert) \,\d x \leq C\int_{\B_R(x_0)} \varphi(\delta^{-j}\lvert u\rvert) + \varphi(\delta^{k-j} \lvert\nabla^ku\rvert) \,\d x
  \end{equation} 
  holds for all $\delta >0$.
\end{lem}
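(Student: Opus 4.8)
The plan is to reduce the estimate to a local one on balls of radius comparable to $\delta$ and then patch, following the classical $W^{k,p}$ interpolation argument but keeping the ``$\delta$-additive'' form of the inequality through every step, since $\varphi$ is not homogeneous. We may assume $1\le j\le k-1$ (the cases $j\in\{0,k\}$ being immediate) and fix $0<\delta\le R$, which is the range of interest.

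First I would cover $B_R(x_0)$ by the convex bodies $U_i:=B_\delta(y_i)\cap B_R(x_0)$, where $\{y_i\}$ is a maximal $\delta/2$-separated subset of $B_R(x_0)$. Then the $U_i$ cover $B_R(x_0)$ with overlap bounded by a dimensional constant $N=N(n)$, and since $\delta\le R$ each $U_i$ is convex with $\diam(U_i)\le 2\delta$ and inradius at least $\delta/4$, hence uniformly comparable to a ball of radius $\delta$. On such bodies Lemma \ref{lem:poincare_sobolev} (with $p=1$), its iterate in Remark \ref{rem:higher_poincaresobolev}, and Markov's inequality for polynomials of degree $\le k-1$ all hold with constants depending only on $n$, $k$ and $\Delta_2(\varphi)$; this is the only place $\delta\le R$ enters.

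On a fixed $U:=U_i$, let $P$ be the unique polynomial of degree $\le k-1$ with $\dashint_U D^\alpha(u-P)\,\d x=0$ for $|\alpha|\le k-1$, and write $g:=u-P$. Applying Lemma \ref{lem:poincare_sobolev} ($p=1$) on $U$ to the rescaled functions $\delta^{-a}\nabla^\ell g$ (for $0\le\ell\le k-1$ and suitable $a\in\bb R$), whose averages over $U$ vanish, and iterating as in Remark \ref{rem:higher_poincaresobolev}, one obtains
\begin{equation*}
  \dashint_U\varphi\bigl(|\nabla^j g|\bigr)\,\d x\le C\dashint_U\varphi\bigl(\delta^{k-j}|\nabla^k u|\bigr)\,\d x
  \qquad\text{and}\qquad
  \dashint_U\varphi\bigl(\delta^{-j}|g|\bigr)\,\d x\le C\dashint_U\varphi\bigl(\delta^{k-j}|\nabla^k u|\bigr)\,\d x .
\end{equation*}
For the polynomial part, Markov's inequality together with the equivalence of norms on polynomials of degree $\le k-1$ over the fat convex body $U$ gives $\|\nabla^j P\|_{L^\infty(U)}\le C\delta^{-j}\dashint_U|P|\,\d x$; hence by the $\Delta_2$-condition and Jensen's inequality $\varphi(|\nabla^j P(x)|)\le C\dashint_U\varphi(\delta^{-j}|P|)\,\d x\le C\dashint_U\bigl[\varphi(\delta^{-j}|u|)+\varphi(\delta^{-j}|g|)\bigr]\,\d x$ for every $x\in U$, and integrating over $U$ and inserting the bound for $g$ yields $\int_U\varphi(|\nabla^j P|)\,\d x\le C\int_U\bigl[\varphi(\delta^{-j}|u|)+\varphi(\delta^{k-j}|\nabla^k u|)\bigr]\,\d x$. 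Since $\nabla^j u=\nabla^j g+\nabla^j P$, convexity and $\Delta_2$ combine these into
\begin{equation*}
  \int_{U_i}\varphi(|\nabla^j u|)\,\d x\le C\int_{U_i}\bigl[\varphi(\delta^{-j}|u|)+\varphi(\delta^{k-j}|\nabla^k u|)\bigr]\,\d x ,
\end{equation*}
and summing over $i$, using $U_i\subset B_R(x_0)$ and the bounded overlap, completes the proof.

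The one genuinely delicate point is the first display: since $\varphi$ is not a power, one cannot simply ``divide through by $\delta^{k-j}$'' in the iterated Poincaré inequality, so the $p=1$ inequality must be applied to the correctly rescaled functions $\delta^{-a}\nabla^\ell g$ at each step, and the resulting averages must then be converted into the weighted modulars using Jensen's inequality and $\Delta_2$ in the right order. The geometric input — fatness of the $U_i$, and the validity of the Poincaré and Markov inequalities on fat convex bodies with absolute constants — is routine, and it is precisely the localisation to scale $\delta$ that keeps the constant independent of $R$ and $\delta$ (a single application of Remark \ref{rem:higher_poincaresobolev} on $B_R(x_0)$ would produce a constant depending on $R$).
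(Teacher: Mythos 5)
Your argument is correct, but it is a genuinely different proof from the one in the paper. The paper works with a one-dimensional Taylor-type estimate $\varphi(\lvert f'(0)\rvert)\leq \frac{C}{\delta}\int_0^{\delta}\varphi(\delta^{-1}\lvert f\rvert)+\varphi(\delta\lvert f''\rvert)\,\d t$ applied along rays $t\mapsto u(x+t\omega)$ and integrated over directions, and then runs an induction on $k$ in which the intermediate term $\varphi(\mu\lvert\nabla^k u\rvert)$ is absorbed into the left-hand side using the $\nabla_2$-condition. You instead localise to a bounded-overlap cover of $B_R(x_0)$ by fat convex pieces of diameter $\sim\delta$, subtract the degree-$(k-1)$ polynomial with vanishing derivative averages, control the remainder by the iterated Poincar\'e inequality of Remark \ref{rem:higher_poincaresobolev} at scale $\delta$, and control the polynomial part by Markov's inequality, before summing. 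Your route has two advantages: it never invokes $\nabla_2$, which matches the stated dependence $C=C(n,k,\Delta_2(\varphi))$ better than the paper's absorption step does, and it avoids the paper's somewhat delicate extension-by-zero of $u$ outside $B_R(x_0)$. The price is two points you correctly flag as routine but which should be recorded: Lemma \ref{lem:poincare_sobolev} is stated only for balls, so you need its (standard) extension to convex bodies with diameter and inradius both comparable to $\delta$; and your argument is confined to the range $0<\delta\leq R$, which is the range in which the lemma is actually applied (in the Caccioppoli iteration one takes $\delta$ comparable to $s-t<R$), but is a restriction the statement does not make explicit. Neither point is a gap in substance.
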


\begin{proof}[Sketch of proof]
  We start by observing the one-dimensional estimate
  \begin{equation}
    \varphi(\lvert f'(0)\rvert) \leq \frac{C}{\delta} \int_0^{\delta} \varphi(\delta^{-1}\lvert f(t)\rvert) + \varphi(\delta \lvert f''(t)\rvert) \,\d t,
  \end{equation} 
  valid for $C^2$ functions $f \colon [0,\delta_0) \to \mathbb R$ and $\delta \leq \delta_0$,
  following \cite[Lemma 5.4]{book:AdamsFournier03} using Jensen's inequality with $\varphi.$ 
  We moreover observe that this inequality remains valid for all $\delta>0$, where we understand the integral on the right-hand side to vanish on $[\delta_0,\infty)$.
  Then assuming $u$ is sufficiently regular, for $x \in \B_R(x_0)$ and $\omega \in S^{n-1}$ we can apply this to $f(t) = u(x+t\omega)$. Integrating the obtained bound over $x, \omega$ gives
  \begin{equation}
    \begin{split}
      \int_{\B_R(x_0)} \varphi(\lvert\nabla u\rvert) \,\d x 
      &\leq C \int_{\B_{R}(x_0)} \varphi\left(\delta^{-1}\lvert u(x)\rvert\right) + \varphi\left(\delta \lvert\nabla^2u(x)\rvert\right) \,\d x,
    \end{split}
  \end{equation} 
  nothing there is no contribution if $x + t\omega \notin \B_R(x_0)$, thereby
  establishing the $j=1,$ $k=2$ case. For the general case we proceed by induction; suppose the result holds for some $k \geq 2$ and $j=k-1,$ then for $\delta>0$ and $\mu \in (0,1)$ we can estimate
  \begin{equation}
    \begin{split}
      \int_{\B_R(x_0)} \varphi(\lvert\nabla^k u\rvert) \,\d x 
      &\quad\leq C \int_{\B_R(x_0)} \varphi(\delta^{-1}\lvert \nabla^{k-1} u\rvert) + \varphi(\delta \lvert \nabla^{k+1}u\rvert) \,\d x\\
      &\quad\leq C \int_{\B_R(x_0)} \varphi((\mu \delta)^{-1}\delta^{-1}\lvert u\rvert) + \varphi(\mu \lvert\nabla^k u\rvert) + \varphi(\delta \lvert \nabla^{k+1}u\rvert) \,\d x.
    \end{split}
  \end{equation} 
  By convexity of $\varphi$ we can choose $\mu>0$ sufficiently small so that $C\varphi(\mu\lvert\nabla^k u\rvert) \leq \frac12 \varphi(\lvert\nabla^ku\rvert)$, from which the estimate follows. A similar downward induction argument extends the result to all $1 \leq j \leq k-1.$
\end{proof}

\subsection{Linear elliptic estimates}

Our linearisation strategy will involve comparing our minimiser with solutions to a linearised system, for which we will need some solvability results. We will consider a bilinear form $\bb A$ on $\bb M_k$ satisfying the \emph{uniform Legendre-Hadamard ellipticity condition}
\begin{equation}\label{eq:general_LHelliptic}
  \lambda\lvert \xi\rvert^2 \lvert\eta\rvert^{2k} \leq \bb A[\xi\tensor\eta^k,\xi\tensor\eta^k] \leq \Lambda\lvert \xi\rvert^2 \lvert\eta\rvert^{2k}
\end{equation} 
for all $\xi \in \bb R^N,$ $\eta \in \bb R^n,$ where $0 < \lambda \leq \Lambda < \infty.$ Here we write $\eta^k = \eta \, \tensor \cdots \tensor\, \eta$ to denote the $k$-fold tensor product and identify elements $\xi \tensor \eta^{k} \in \bb M_k$ to send $(x_1,\dots,x_k) \to \xi\sum_{|\alpha|=k} x^{\alpha}\eta^{\alpha}.$ These generalise the rank-one matrices from the $k=1$ case. 

We will consider the operator
\begin{equation}
  \nabla^k : \bb A \nabla^ku = \sum_{\lvert\alpha\rvert=\lvert\beta\rvert=m} \D^{\beta}\left( \bb A_{\beta,\alpha} \D^{\alpha}u\right),
\end{equation} 
where the coefficients $\bb A_{\alpha\beta}$ of $\bb A$ satisfies $\xi_1 \cdot \bb A_{\alpha,\beta} \xi_2 = \bb A[\xi_1 \otimes \mathrm e^{\alpha},\xi_2 \otimes \mathrm e^{\beta}]$ for all $\xi_1, \xi_2 \in \mathbb R^N$.

\begin{rem}\label{rem:F_elliptic}
  For our harmonic approximation arguments, we will need the fact that the strict quasiconvexity condition \ref{eq:F_quasiconvex} implies that $\bb A[v,w] := F''(z_0)[v,w]$ is Legendre-Hadamard elliptic. To see this let $F$ satisfy Hypotheses \ref{hyp:generalgrowth_F}, and for $z_0 \in \bb M_k$ and $\xi \in \CC^{\infty}_c(\Omega,\bb R^N)$ consider the functional
  \begin{equation}
    \mathcal J(t) = \int_{\Omega} F(z_0+t\nabla^k\xi) - F(z_0) - \nu\, \varphi_{1+\lvert z_0\rvert}(t \nabla^k\xi) \,\d x.
  \end{equation} 
  By \ref{eq:F_quasiconvex} we have $\mathcal J(t) \geq 0$ for all $t$, so it is minimised at $t=0$. Therefore noting that $\varphi_{1+\lvert z_0\rvert}''(0)$ exists and by differentiating under the integral sign, we have $\mathcal J''(0)$ exists and is non-negative. That is, we have
  \begin{equation}
    \int_{\Omega} F''(z_0)\nabla^k\xi : \nabla^k \xi \,\d x \geq \nu \int_{\Omega} \frac{\varphi'(\max\{1,\lvert z_0\rvert\})}{\max\{1,\lvert z_0\rvert\}}  \lvert\nabla^k \xi\rvert^2 \,\d x.
  \end{equation} 
  From this we deduce that
  \begin{equation}\label{eq:F_elliptic}
    F''(z_0) (\xi \tensor \eta^k) : (\xi \tensor \eta^k) \geq \frac{\nu\,\varphi'(1)}{1+M} \lvert \xi\rvert^2 \lvert \eta\rvert^{2k}
  \end{equation} 
for all $z_0 \in \bb M_k$ with $\lvert z_0\rvert\leq M$ and $\xi \in \bb R^N, \eta \in \bb R^n.$ Note that since we normalised $\varphi(1)=1,$ the ellipticity constant only depends on $\nu, N, \Delta_2(\varphi).$ 
\end{rem}

\begin{prop}\label{prop:elliptic_solvability}
  Let $\varphi \in \Delta_2 \cap \nabla_2$ be an $N$-function and $\bb A$ be uniformly Legendre-Hadamard elliptic as above. Then if $\Omega \subset \bb R^n$ is a bounded smooth domain, the problem
  \begin{equation}\label{eq:linear_divform}
    \pdeproblem{(-1)^k \nabla^k : \bb A\nabla^k u}{(-1)^k \nabla^k :G}{\Omega,}{\partial_{\nu}^ju}{0}{\partial \Omega, \text{ for all }  1 \leq j \leq k-1,}
  \end{equation} 
  is uniquely solvable in $\WW_0^{k,\varphi}(\Omega,\bb R^N)$ for all $G \in \LL^{\varphi}(\Omega, \bb M_k),$ and the unique solution $u$ satisfies
  \begin{equation}\label{eq:elliptic_modular}
    \int_{\Omega} \varphi\left(\left\lvert \nabla^ku\right\rvert\right)\,\d x \leq C \int_{\Omega} \varphi(\lvert G\rvert) \,\d x,
  \end{equation} 
  where $C = C(n,N,k,\Omega,\lambda,\Lambda,\Delta_2(\varphi),\nabla_2(\varphi))>0$.
\end{prop}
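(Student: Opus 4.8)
The plan is to obtain the estimate \eqref{eq:elliptic_modular} first for the $L^2$-theory and then bootstrap to general $\varphi \in \Delta_2 \cap \nabla_2$ via a Calder\'on--Zygmund / real-interpolation argument, combined with the Luxemburg-norm reformulation to convert the modular bound into a norm bound and back. In the $L^2$ setting, existence and uniqueness in $W^{k,2}_0(\Omega,\bb R^N)$ with the a priori estimate $\norm{\nabla^k u}_{L^2} \leq C\norm{G}_{L^2}$ is classical: one works with the bilinear form $a(u,v) = \int_\Omega \bb A \nabla^k u : \nabla^k v\,\d x$ on $W^{k,2}_0(\Omega,\bb R^N)$, which by the Legendre--Hadamard condition \eqref{eq:general_LHelliptic} and the G\r{a}rding inequality (valid on $W^{k,2}_0$ for constant-coefficient Legendre--Hadamard elliptic operators, since the lower-order terms vanish here) is coercive; Lax--Milgram then gives a unique solution to $a(u,v) = \int_\Omega G : \nabla^k v\,\d x$ for every $v \in W^{k,2}_0(\Omega,\bb R^N)$, which is exactly the weak formulation of \eqref{eq:linear_divform} with the stated conormal boundary conditions. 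Testing with $v = u$ and using Young's inequality yields the $L^2$ estimate.

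Next I would upgrade to $L^q$ for $q$ in a neighbourhood of $2$, and in fact for all $1 < q < \infty$, by the standard Calder\'on--Zygmund theory for constant-coefficient elliptic systems: the solution operator $G \mapsto \nabla^k u$ is (after reducing to $\bb R^n$ by flattening the smooth boundary and using the conormal conditions, or by directly citing the known $W^{k,q}$-regularity for such systems on smooth domains) a bounded linear operator on $L^q(\Omega,\bb M_k)$ for every $q \in (1,\infty)$, with norm depending only on $n, N, k, q, \lambda, \Lambda$ and $\Omega$. The passage from the $L^q$-scale to the Orlicz scale $L^\varphi$ is then a soft functional-analytic step: since $\varphi \in \Delta_2 \cap \nabla_2$, one has $p_\varphi < \infty$ and $(p_{\varphi^*})' > 1$, so $L^\varphi$ embeds between two Lebesgue spaces and, more to the point, the operator $T : G \mapsto \nabla^k u$ extrapolates from $\{L^q : 1 < q < \infty\}$ to $L^\varphi$; this is precisely the setting of the extrapolation theorem for Orlicz spaces with $\Delta_2 \cap \nabla_2$ targets (e.g.\ via the Rubio de Francia machinery, or directly via the pointwise estimate $Tf \lesssim M$-bounds). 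Concretely I would invoke: $\norm{Tf}_{L^\varphi} \leq C \norm{f}_{L^\varphi}$ whenever $T$ is bounded on all $L^q$, $1 < q < \infty$, with constants behaving polynomially in $q$ — which holds for Calder\'on--Zygmund operators. Finally, the Luxemburg-norm inequality $\norm{\nabla^k u}_{L^\varphi} \leq C\norm{G}_{L^\varphi}$ is converted into the modular inequality \eqref{eq:elliptic_modular} using the $\Delta_2$-condition: $\int_\Omega \varphi(|h|)\,\d x$ and $\norm{h}_{L^\varphi}$ control each other up to $\Delta_2(\varphi)$-dependent constants whenever one normalises, and a standard homogeneity/scaling argument (testing the estimate on $G/\norm{G}_{L^\varphi}$ and unwinding) gives the modular form.

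The main obstacle I anticipate is not any single deep step but rather assembling the right black boxes at the right level of generality: in particular, making sure the Calder\'on--Zygmund $L^q$-estimate is available \emph{uniformly} for the full range $1 < q < \infty$ for the higher-order system with these specific conormal boundary conditions on a general bounded smooth domain (as opposed to the whole space or a half-space), and then checking that the extrapolation to $L^\varphi$ genuinely only needs $\Delta_2 \cap \nabla_2$ and the uniform-in-$q$ growth of the $L^q$ operator norms. An alternative, more self-contained route that sidesteps extrapolation would be a direct Orlicz Calder\'on--Zygmund argument: establish a reverse-H\"older / good-$\lambda$ inequality for $\nabla^k u$ by comparing $u$ on small balls with solutions of the homogeneous system (using the $L^2$-estimate and interior $C^\infty$-regularity of solutions to constant-coefficient systems as the comparison estimate), then run the Orlicz-space version of the good-$\lambda$ iteration exactly as in the $L^p$ case — this is where the $\nabla_2$-condition is used, to absorb the error terms. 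Either way the quasiconvexity hypothesis enters only through Remark \ref{rem:F_elliptic}, which guarantees that the linearised coefficients $\bb A = F''(z_0)$ fall under \eqref{eq:general_LHelliptic}, so the proposition as stated — with $\bb A$ a general Legendre--Hadamard elliptic form — is exactly the abstract ingredient needed.
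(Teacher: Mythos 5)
Your overall architecture --- $L^2$ solvability by Lax--Milgram using the coercivity of constant-coefficient Legendre--Hadamard forms on $W^{k,2}_0$, then $L^q$ estimates for all $1<q<\infty$, then passage to the Orlicz scale --- is exactly the paper's. The differences lie in how the two intermediate black boxes are filled. For the $L^q$ step the paper does not simply cite Calder\'on--Zygmund theory for the higher-order system with conormal conditions on a smooth domain; it runs a self-contained Krylov-style argument: decompose $u=v+w$ on small balls with $Lv=0$, use interior energy estimates for $v$ to get the pointwise bound $\mathcal M^{\#}_{\Omega}(\lvert\nabla^k u\rvert)\lesssim \theta\,\mathcal M_{\Omega}(\lvert\nabla^k u\rvert^2)^{1/2}+\theta^{-n/2}\mathcal M_{\Omega}(\lvert G\rvert^2)^{1/2}$, and conclude for $p\geq 2$ via the Fefferman--Stein and Hardy--Littlewood maximal inequalities, handling $1<p<2$ by duality. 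Your good-$\lambda$ alternative is close in spirit to this and either route is acceptable; citing the $L^q$ theory wholesale is also defensible provided you track that the operator norms stay under control as $q\to 1,\infty$ only in the polynomial sense you describe.

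The one step that does not work as written is the final conversion from the Luxemburg-norm inequality $\norm{\nabla^k u}_{L^{\varphi}}\leq C\norm{G}_{L^{\varphi}}$ to the modular inequality (\ref{eq:elliptic_modular}). For a non-homogeneous $N$-function, ``testing on $G/\norm{G}_{L^{\varphi}}$ and unwinding'' fails: the equivalence between modular and norm is exact only on the unit ball, and away from it degrades to power-type comparisons governed by the upper and lower indices of $\varphi$, so this argument yields at best $\int\varphi(\lvert\nabla^k u\rvert)\leq C\left(\int\varphi(\lvert G\rvert)\right)^{s}$ for some $s\neq 1$, not (\ref{eq:elliptic_modular}). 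The correct route --- and the one the paper takes --- is to invoke an interpolation theorem that delivers the modular estimate directly for linear operators bounded on all $L^q$, $1<q<\infty$ (Peetre, Miyamoto, or Stampacchia-type interpolation; this is precisely where $\Delta_2\cap\nabla_2$ enters, pinning the indices of $\varphi$ strictly between $1$ and $\infty$). Your extrapolation alternative can likewise be formulated so as to produce modular estimates, but you should state and use it in that form rather than attempting to derive the modular bound from the norm bound afterwards.
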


We will apply this estimate on balls, on which we can note this estimate is scale invariant.
Note that the boundary condition (\ref{eq:linear_divform}) will be interpreted as simply requiring $u \in \WW^{k,\varphi}_0(\Omega,\bb R^N).$

This result is well-known and follows from the results in \textsc{Agmon, Douglis, \& Nirenberg} \cite{article:ADN2} in the $\LL^p$ setting, which extends to the Orlicz setting by interpolation. We will sketch a more elementary argument based on the interpolation results of \textsc{Stampacchia} \cite{article:Stampacchia65}, using a modern formulation using maximal functions (see \emph{e.g.}\,\cite{article:Krylov07,article:DongKim18}), which we combine with a Marcinkiewicz-type interpolation argument.
In the case $k=1$, these methods were also surveyed by the author in \cite[Chapter 3]{thesis:Irving22}.

\begin{proof}[Sketch of proof]
  We first establish the result for when $\varphi(t) = t^p,$ for which we let $L = (-1)^k \nabla^k : \bb A\nabla^k$ viewed as a linear operator $\WW^{k,p}_0(\Omega,\bb R^N) \to \WW^{-k,p}(\Omega,\bb R^N) \simeq \WW_0^{k,p'}(\Omega,\bb R^N)^*.$ When $p=2$ we can use the Plancherel theorem to show that for any $\omega \subset \bb R^n$ open we have
  \begin{equation}\label{eq:w_energy_estimate}
    \lambda \int_{\omega} \lvert \nabla^k u \rvert^2 \,\d x \leq \int_{\omega} \bb A[\nabla^k u, \nabla^k u] \,\d x
  \end{equation} 
  for all $u \in \WW^{k,2}_0(\omega,\bb R^N),$ from which we can deduce unique solvability in $\omega$ using the Lax-Milgram lemma. If $p\geq 2$ we will establish an a-priori estimate, so let $u \in \CC^{\infty}_c(\Omega,\bb R^N).$ Then if $x_0 \in\overline\Omega$ and $0<R<R_0$ with $R_0>0$ sufficiently small, we have the problem
  \begin{equation}
    \pdeproblem{Lw}{Lu}{\Omega \cap \B_R(x_0),}{\partial_{\nu}^ju}{0}{\partial \Omega \cap \B_R(x_0), \text{ for all } 1 \leq j \leq k-1}
  \end{equation} 
  admits a unique solution $w \in \WW^{k,2}_0(\Omega \cap \B_R(x_0),\bb R^N).$ Since the difference $v = u-w$ satisfies $Lv = 0$ in $\Omega \cap \B_R(x_0),$ by standard energy methods (see for instance \cite[Section 5.11]{book:Taylor1_11}) we have the uniform estimate
  \begin{equation}\label{eq:v_uniform_estimate}
    \sup_{\Omega \cap \B_{R/2}(x_0)} \lvert\nabla^{k+1}v\rvert \leq \frac{C}{R} \dashint_{\Omega \cap \B_R(x_0)} \lvert\nabla^ku\rvert^2 \,\d x.
  \end{equation} 
  By combining these, we will show for all $x_0 \in \overline\Omega$ and $\theta \in (0,1)$ that
  \begin{equation}\label{eq:pointwise_maximal}
    \mathcal M_{\Omega}^{\#}(\lvert\nabla^ku\rvert)(x_0) \leq C \left( \theta \mathcal M_{\Omega}\left(\lvert\nabla^ku\rvert^2\right)(x_0)^{\frac12}  + \theta^{-\frac n2}\mathcal M_{\Omega}\left(\lvert G\rvert^2\right)(x_0)^{\frac12}  \right),
  \end{equation} 
  where we define the localised maximal functions
  \begin{align}
    \label{eq:max_def}\mathcal M_{\Omega}(f)(x_0) &= \sup_{\substack{R>0}} \dashint_{\Omega \cap \B_R(x_0)} \lvert f\rvert \,\d x \\
    \mathcal M_{\Omega}^{\#}(f)(x_0) &= \sup_{\substack{R>0}} \dashint_{\Omega \cap \B_R(x_0)} \lvert f - (f)_{\Omega \cap \B_R(x_0)}\rvert \,\d x
  \end{align}
  for $f \in \LL^1_{\loc}(\Omega,\bb M_k).$ To see this, for $R<R_0$ note that the above estimates implies that
  \begin{equation}
    \begin{split}
      &\dashint_{\Omega \cap \B_{\theta R}(x)} \lvert \nabla^k u - (\nabla^k u)_{\Omega \cap \B_{\theta R}(x_0)}\rvert^2 \,\d x \\
      &\leq 2\dashint_{\Omega \cap \B_{\theta R}(x)} \lvert \nabla^k v - (\nabla^k v)_{\Omega \cap \B_{\theta R}(x_0)}\rvert^2 \,\d x
      + 2\dashint_{\Omega \cap \B_{\theta R}(x)} \lvert \nabla^k w - (\nabla^k w)_{\Omega \cap \B_{\theta R}(x_0)}\rvert^2 \,\d x \\
      &\leq C \theta^2 \dashint_{\Omega \cap \B_R(x_0)} \lvert\nabla^k u\rvert^2 \,\d x + C\theta^{-n} \dashint_{\Omega \cap \B_R(x_0)} \lvert G\rvert^2 \,\d x,
    \end{split}
  \end{equation} 
  using \eqref{eq:w_energy_estimate} for $w$ and \eqref{eq:v_uniform_estimate} for $v$.
  If $R>R_0$ similar estimates hold by a patching argument, from which the pointwise estimate (\ref{eq:pointwise_maximal}) follows. Now given $p \geq 2$, taking $\LL^{p/2}$ norms on both sides, we can use the Hardy-Littlewood maximal inequality and the Fefferman-Stein inequality in this setting (see \emph{e.g.}\,\cite{article:DRS10,article:DongKim18}) to deduce the estimate
  \begin{equation}
    \norm{\nabla^ku}_{\LL^p(\Omega,\bb M_k)} \leq C \left( \theta \norm{\nabla^ku}_{\LL^p(\Omega,\bb M_k)} + \theta^{-\frac n2} \norm{G}_{\LL^p(\Omega,\bb M_k)} \right),
  \end{equation}
  so if $u \in \CC^{\infty}_c(\Omega)$ we deduce $\LL^p$ estimates of the from \eqref{eq:elliptic_modular} for $p\geq 2.$ By density these estimates extend to all $u \in \WW^{k,p}_0(\Omega,\bb R^N),$ and by regularising $G \in \LL^p(\Omega,\bb M_k)$ and passing the limit using the a-priori estimate we can infer that $L$ is an isomorphism $\WW^{k,p}_0(\Omega,\bb R^N) \to \WW^{-k,p}(\Omega,\bb R^N)$ (injectivity follows form the $p=2$ case). Hence by duality (noting the adjoint operator $L^*$ is an isomorphism) unique solvability extends to the $1<p<2$ range.

  For the general Orlicz setting, we will extend the a-priori estimate \eqref{eq:elliptic_modular}, proven for $\varphi(t) = t^p$, to hold for a general $N$-function $\varphi \in \Delta_2 \cap \nabla_2$.
  This will involve a generalisation of the Marcinkiewicz interpolation theorem,  which is proven in \cite[Theorem 2]{article:Zygmund56} (see also \cite{article:Riordan55}); here one requires that there exists $1 < p_0 < p_1 < \infty$ and $C>0$ such that
  \begin{equation}
    \int_t^{\infty} s^{-p_1} \varphi(s) \frac{\d s}{s} \leq C t^{-p_1} \varphi(t), \quad \int_0^{t} s^{-p_0} \varphi(s) \frac{\d s}{s} \leq C t^{-p_0} \varphi(t)
  \end{equation} 
  for all $t \geq 0$.
  For general $\varphi \in \Delta_2\cap\nabla_2$, it follows from \cite[Theorems 11.7, 11.8]{book:Maligranda89} that such $p_0,p_1$ exist, thereby establishing \eqref{eq:elliptic_modular}.
  Now we can show that $L \colon \WW^{k,\varphi}_0(\Omega;\mathbb R^N) \to \WW^{-k,\varphi}_0(\Omega;\mathbb R^N)$ is an isomorphism by the same argument as in the $\LL^p$ case, noting that uniqueness follows from the inclusion $\WW^{k,\varphi}_0(\Omega) \subset \WW^{k,p_0}_0(\Omega)$.
\end{proof}

We will also need solvability results for when the right-hand side $g$ lies in $\WW^{k-1,\varphi}(\Omega,\bb M_{k-1}).$ This will follow from the above in conjunction with the following (non-optimal) representation theorem.

\begin{lem}\label{lem:negative_sobolev}
  Suppose $\varphi \in \Delta_2 \cap \nabla_2,$ $\ell \geq 0$ and $g \in \LL^{\varphi}(\B_R(x_0),\bb M_{\ell}),$ where $\B_R(x_0)\subset \bb R^n$ is any ball. Then there exists $G \in \LL^{\varphi^{\frac{n}{n-1}}}(\B_R(x_0),\bb M_{\ell+1})$ such that $-\nabla \cdot G = g$ in $\B_R(x_0),$ and we have the corresponding estimate
  \begin{equation}
    \left(\dashint_{\B_R(x_0)} \varphi(\lvert G\rvert)^{\frac{n}{n-1}} \,\d x\right)^{\frac{n-1}n} \leq C \dashint_{\B_R(x_0)} \varphi(R\lvert g\rvert) \,\d x.
  \end{equation} 
\end{lem}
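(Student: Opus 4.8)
The plan is to produce $G$ as the gradient of a Newtonian potential and then read off the integrability of $G$ from the Orlicz Poincar\'e--Sobolev inequality already in hand (Lemma \ref{lem:poincare_sobolev}). By the rescaling $g(y)\mapsto R\,g(x_0+Ry)$ --- which turns a solution on $B_1$ into one on $B_R(x_0)$ with the claimed bound --- it suffices to treat $x_0=0$, $R=1$. So fix $g\in L^{\varphi}(B_1,\bb M_{\ell})$, extend it by zero to $\bb R^n$, let $N$ be the fundamental solution of $-\Delta$, and set $\Phi:=N*g$, $G_0:=\nabla\Phi$. Since $\lvert\nabla N(x)\rvert\sim\lvert x\rvert^{1-n}$ is locally integrable and $g\in L^{\varphi}(B_1)\subset L^1(B_1)$ (the inclusion because $B_1$ is bounded and $\varphi\in\Delta_2$), the convolution $G_0=(\nabla N)*g$ is a well-defined element of $L^1_{\loc}(\bb R^n,\bb M_{\ell+1})$ with $-\nabla\cdot G_0=-\Delta\Phi=g$ in $B_1$. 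Constant fields are divergence-free, so $G:=G_0-(G_0)_{B_1}$ still satisfies $-\nabla\cdot G=g$ in $B_1$, and this normalised field (transported back to $B_R(x_0)$) is what I would return.

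The analytic crux is that $\nabla G=\nabla^2\Phi$ is a Calder\'on--Zygmund singular integral of $g$, so $L^{\varphi}(\bb R^n)$-boundedness of such operators --- valid precisely because $\varphi\in\Delta_2\cap\nabla_2$, by interpolating the classical $L^p$ estimates (cf.\ the end of the proof of Proposition \ref{prop:elliptic_solvability}) --- together with the $\Delta_2$-condition to pass between the Luxemburg norm and the modular, gives
\begin{equation*}
  \int_{B_1}\varphi(\lvert\nabla G\rvert)\,\d x\le\int_{\bb R^n}\varphi(\lvert\nabla^2\Phi\rvert)\,\d x\le C\int_{\bb R^n}\varphi(\lvert g\rvert\chi_{B_1})\,\d x=C\int_{B_1}\varphi(\lvert g\rvert)\,\d x.
\end{equation*}
Moreover $\lvert G_0(x)\rvert\le C\int_{B_1}\lvert x-y\rvert^{1-n}\lvert g(y)\rvert\,\d y$ for $x\in B_1$, and since this kernel is integrable in $y$ over $B_1$ uniformly in $x$, a Jensen-inequality averaging argument (using $\Delta_2$) yields $\int_{B_1}\varphi(\lvert G_0\rvert)\,\d x\le C\int_{B_1}\varphi(\lvert g\rvert)\,\d x<\infty$; hence $G\in L^{\varphi}(B_1)$, and so $G\in W^{1,\varphi}(B_1,\bb M_{\ell+1})$. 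Then Lemma \ref{lem:poincare_sobolev} applied to $u=G$ with exponent $p=\tfrac{n}{n-1}$ on $B_1$ (note $(G)_{B_1}=0$) gives
\begin{equation*}
  \left(\dashint_{B_1}\varphi(\lvert G\rvert)^{\frac{n}{n-1}}\,\d x\right)^{\frac{n-1}{n}}\le C\dashint_{B_1}\varphi(\lvert\nabla G\rvert)\,\d x\le C\int_{B_1}\varphi(\lvert g\rvert)\,\d x,
\end{equation*}
so that $G\in L^{\varphi^{n/(n-1)}}(B_1,\bb M_{\ell+1})$ with the desired estimate; undoing the scaling then finishes the proof.

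I expect the only genuine obstacle to be the Orlicz Calder\'on--Zygmund bound $\int\varphi(\lvert\nabla^2\Phi\rvert)\le C\int\varphi(\lvert g\rvert)$. The Newtonian potential itself is mildly awkward when $n=2$ (logarithmic kernel), but only $\nabla\Phi$ and $\nabla^2\Phi$ ever enter and their kernels $\lvert x\rvert^{1-n}$, $\lvert x\rvert^{-n}$ are uniform in $n\ge2$, so this is harmless; alternatively one sidesteps it entirely by solving the Dirichlet problem for $-\Delta$ on a slightly larger ball and quoting the global $W^{2,\varphi}$-estimate, proved by the same Peetre interpolation. Everything else --- that the divergence identity survives subtracting the mean, the membership $G\in W^{1,\varphi}(B_1)$, and the tracking of $R$-powers under scaling --- is routine bookkeeping. (One could equally build $G$ from the Bogovskii operator applied to $g-(g)_{B_1}$ together with an explicit primitive of the constant $(g)_{B_1}$; the analytic input, boundedness of the relevant singular integral on $L^{\varphi}$, is the same.)
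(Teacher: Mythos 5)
Your argument is correct and follows essentially the same route as the paper's: define $G$ via the Newtonian potential, bound $\nabla G$ by the Calder\'on--Zygmund/interpolation estimate on $L^{\varphi}$, and conclude with the Poincar\'e--Sobolev inequality of Lemma \ref{lem:poincare_sobolev} at exponent $\tfrac{n}{n-1}$. The only (cosmetic) difference is that you subtract the mean of $G$ so that Lemma \ref{lem:poincare_sobolev} applies with no average term, whereas the paper keeps the raw potential and controls the average by the very same Jensen--Fubini estimate on the kernel $\lvert x-y\rvert^{1-n}$ that you use to verify $G\in L^{\varphi}$; your scaling bookkeeping in fact produces the scale-invariant (mean-integral) form of the estimate, which is the form actually used later in the harmonic approximation.
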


\begin{proof}[Sketch of proof]
  We will consider the Newtonian potential (see \emph{e.g.}\,\cite[Section 7]{book:GilbargTrudinger98})
  \begin{equation}\label{eq:G_defn}
    G(x) = \frac{-1}{n\omega_n}\int_{\B_R(x_0)} \frac{g(y) \tensor (x-y)}{\lvert x- y\rvert^{n}} \,\d y,
  \end{equation} 
  which satisfies $-\nabla\cdot G = g$ in $\B_R(x_0)$ and we claim we have the singular integral estimates
  \begin{equation}
    \int_{\B_R(x_0)} \varphi(\lvert\nabla G\rvert) \,\d x \leq \int_{\B_R(x_0)} \varphi(\lvert g\rvert) \,\d x.
  \end{equation} 
  Indeed the case $\varphi(t) = t^p$ is proven in \cite[Theorem 9.9]{book:GilbargTrudinger98} (applied to $\mathbbm{1}_{\B_R(x_0)}g$), and we can extend to general $\varphi \in \Delta_2 \cap \nabla_2$ using the results of \cite{article:Zygmund56} analogously as in the above proof.
  We now apply Poincar\'e-Sobolev inequality (Lemma \ref{lem:poincare_sobolev}), which gives an extra term arising form the average; this can be bounded using the estimate
  \begin{equation}
    \begin{split}
      \int_{\B_R(x_0)} \varphi(\lvert G\rvert) \,\d x
      &\leq \frac{C}{R} \int_{\B_R(x_0)} \int_{\B_R(x_0)} \varphi(R\lvert g(y)\rvert) \lvert x- y\rvert^{1-n} \,\d x \,\d y \\
      &\leq C \int_{\B_R(x_0)} \varphi(\lvert R g(y)\rvert) \,\d y.
    \end{split}
  \end{equation} 
  using (\ref{eq:G_defn}), Jensen's inequality (applied to the measure $\d \mu = \frac CR\lvert x- y\rvert^{1-n} \,\d y$) and Fubini's theorem.
\end{proof}

\subsection{A Lipschitz truncation lemma}

We will establish the following higher-order version of the Lipschitz truncation lemma of \textsc{Acerbi \& Fusco} \cite{article:AcerbiFusco84,article:AcerbiFusco87}, which we will need for the harmonic approximation argument. The higher order case requires a more delicate extension argument, and was established in \textsc{Friesecke, James, \& M\"uller} \cite[Proposition A.2]{article:FrieseckeEtAl02} for $k=2$ using extension results in \textsc{Ziemer} \cite{book:Ziemer89}. These arguments straightforwardly generalise to the case of general $k$, which we record here.

\begin{prop}\label{prop:lipschitz_truncation}
  Let $\varphi \in \Delta_2 \cap \nabla_2$ and $q \in \WW^{k,\varphi}_0(\B_R(x_0),\bb R^N)$ with $\B_R(x_0) \subset \bb R^n$ a ball. Then for each $\lambda>0$ there exists $q_{\lambda} \in \WW^{k,\infty}_0(\B_R(x_0),\bb R^N)$ satisfying the following estimates.
  \begin{align}
    \norm{\nabla^kq_{\lambda}}_{\LL^{\infty}(\B_R(x_0),\bb M_k)} &\leq C_1\lambda, \\
    \int_{\B_R(x_0)} \varphi(\lvert\nabla^kq_{\lambda}\rvert) \,\d x & \leq C_2\int_{\B_R(x_0)} \varphi(\lvert\nabla^kq\rvert)\,\d x, \label{eq:lipschitz_truncation1}\\
    \varphi(\lambda) \mathcal L^n\left(\left\{ x \in \B_R(x_0) : q(x) \neq q_{\lambda}(x)\right\}\right) & \leq C_2\int_{\B_R(x_0)} \varphi(\lvert\nabla^kq\rvert)\,\d x,\label{eq:lipschitz_truncation2}
  \end{align}
  where $C_1 = C_1(n,N,k)$ and $C_2 = C_2(n,N,k,\Delta_2(\varphi),\nabla_2(\varphi)).$
\end{prop}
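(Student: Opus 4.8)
The plan is to follow the classical Acerbi--Fusco Lipschitz truncation strategy, adapted to the $k^{\text{th}}$-order Orlicz setting, by working on the super-level sets of the Hardy--Littlewood maximal function of $\nabla^k q$. First I would extend $q$ by zero to all of $\bb R^n$ (legitimate since $q \in W^{k,\varphi}_0(B_R(x_0),\bb R^N)$, so the extension lies in $W^{k,\varphi}(\bb R^n,\bb R^N)$ with the same norm), and set
\begin{equation*}
  H_\lambda = \left\{ x \in \bb R^n : \cal M(\lvert\nabla^kq\rvert)(x) \leq \lambda \right\}, \qquad B_\lambda = \bb R^n \setminus H_\lambda.
\end{equation*}
The weak-$(1,1)$ maximal inequality together with the $\Delta_2$-$\nabla_2$ assumptions (which guarantee $\cal M$ is bounded on $L^{\varphi}$, see e.g.\ the Fefferman--Stein-type statements already invoked in Proposition \ref{prop:elliptic_solvability}) gives the measure estimate $\varphi(\lambda)\,\cal L^n(B_\lambda) \leq C\int \varphi(\lvert\nabla^kq\rvert)$, which will ultimately yield \eqref{eq:lipschitz_truncation2} once we know $\{q \neq q_\lambda\} \subset B_\lambda$ up to a null set.

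The core step is the extension: on the good set $H_\lambda$ the function $q$ enjoys, by standard Calder\'on--Zygmund theory for Sobolev functions (the Campanato-type characterisation via maximal functions, as in Ziemer \cite{book:Ziemer89} or the $k=2$ case of \cite{article:FrieseckeEtAl02}), the property that $\nabla^{k-1}q$ has a representative which is Lipschitz on $H_\lambda$ with constant $\sim\lambda$, and more precisely $q$ agrees $\cal L^n$-a.e.\ on $H_\lambda$ with a $C^{k-1,1}$ function whose $k^{\text{th}}$-order Taylor polynomials at points of $H_\lambda$ have coefficients bounded by $C\lambda$. I would then invoke a Whitney-type $C^{k-1,1}$ extension theorem (the Whitney extension theorem for $k^{\text{th}}$-order jets, valid because the jet data from $H_\lambda$ satisfies the required compatibility estimates on $H_\lambda$) to produce $q_\lambda \in W^{k,\infty}(\bb R^n,\bb R^N) = C^{k-1,1}(\bb R^n,\bb R^N)$ with $q_\lambda = q$ a.e.\ on $H_\lambda$ and $\norm{\nabla^kq_\lambda}_{L^\infty} \leq C_1\lambda$ with $C_1 = C_1(n,N,k)$; this is exactly where the higher-order case is more delicate than $k=1$, since one must patch together full $k$-jets rather than just values, and one must check the Whitney cubes in $B_\lambda$ carry consistent polynomial data. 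To get $q_\lambda \in W^{k,\infty}_0(B_R(x_0),\bb R^N)$ I would note that since $q$ vanishes outside $B_R(x_0)$ and $\cal M(\lvert\nabla^kq\rvert)$ controls the relevant difference quotients, a point far enough outside $B_R(x_0)$ (at scale comparable to its distance, which is large where $\cal M$ is small) lies in $H_\lambda$ and carries trivial jet data, so after possibly enlarging the truncation level slightly the support of $q_\lambda$ stays inside $B_R(x_0)$; alternatively one multiplies by a fixed cutoff, absorbing the harmless error.

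It remains to prove the modular bound \eqref{eq:lipschitz_truncation1}. I would split $\int_{B_R(x_0)}\varphi(\lvert\nabla^kq_\lambda\rvert) = \int_{H_\lambda}\varphi(\lvert\nabla^kq\rvert) + \int_{B_\lambda}\varphi(\lvert\nabla^kq_\lambda\rvert)$; the first term is trivially bounded by $\int\varphi(\lvert\nabla^kq\rvert)$, and on $B_\lambda$ we use $\lvert\nabla^kq_\lambda\rvert \leq C_1\lambda$ together with the level-set decomposition
\begin{equation*}
  \int_{B_\lambda}\varphi(\lvert\nabla^kq_\lambda\rvert)\,\d x \leq \sum_{j\geq 0}\varphi(2^{j+1}C_1\lambda)\,\cal L^n\!\left(\{2^j\lambda < \cal M(\lvert\nabla^kq\rvert) \leq 2^{j+1}\lambda\}\right),
\end{equation*}
estimating each measure by the weak-$(1,1)$ bound $\cal L^n(\{\cal M(\lvert\nabla^kq\rvert) > 2^j\lambda\}) \leq \tfrac{C}{2^j\lambda}\int_{\{\lvert\nabla^kq\rvert > 2^{j-1}\lambda\}}\lvert\nabla^kq\rvert\,\d x$ (sharpened by the Calder\'on--Zygmund truncation trick) and summing using the $\Delta_2$-condition on $\varphi$ and the doubling behaviour $\varphi(2^{j+1}C_1\lambda) \leq C\,2^{(j+1)p_\varphi}\varphi(\lambda)$; a routine rearrangement (of the kind standard in the Orlicz Lipschitz-truncation literature, cf.\ \cite{article:DieningEtAl12a}) then bounds the sum by $C_2\int_{B_R(x_0)}\varphi(\lvert\nabla^kq\rvert)\,\d x$ with $C_2$ depending on $n,N,k,\Delta_2(\varphi),\nabla_2(\varphi)$. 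The main obstacle is the higher-order Whitney extension: ensuring that the $k$-jet data collected on $H_\lambda$ is genuinely coherent (that is, the polynomials attached to nearby points of $H_\lambda$ differ at the correct rate), which is what forces the use of the maximal-function super-level structure and the $C^{k-1,1}$ Whitney theorem rather than a naive mollification, and which is precisely the point handled in \cite[Proposition A.2]{article:FrieseckeEtAl02} for $k=2$ and carried over here to general $k$.
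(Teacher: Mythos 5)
Your strategy coincides with the paper's (super-level sets of maximal functions plus a Whitney-type $C^{k-1,1}$ extension of the $k$-jets, following \cite[Proposition A.2]{article:FrieseckeEtAl02}), and the interior coherence estimates and the two integral bounds are essentially right; for \eqref{eq:lipschitz_truncation1} the paper in fact argues more simply than your level-set summation, by writing $\int_B\varphi(\lvert\nabla^k q_\lambda\rvert)\leq\int_B\varphi(\lvert\nabla^k q\rvert)+\varphi(C_1\lambda)\,\cal L^n(\{q\neq q_\lambda\})$ and invoking \eqref{eq:lipschitz_truncation2} together with $\Delta_2$.

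The genuine gap is the zero boundary condition. The Whitney extension from $H_\lambda$ does \emph{not} automatically have support in $B_R(x_0)$: the bad set $B_\lambda$ can straddle $\partial B_R(x_0)$, and on a Whitney cube there the extension averages the (nonzero) jets of nearby good points inside the ball, so it is in general nonzero outside $B_R(x_0)$ and does not lie in $W^{k,\infty}_0(B_R(x_0),\bb R^N)$. ``Enlarging the truncation level'' does not change this, and multiplying by a cutoff is not harmless either: it destroys the identity $q_\lambda=q$ on part of $H_\lambda$ (which \eqref{eq:lipschitz_truncation2} needs), and since $q\in W^{k,\varphi}_0$ need not have compact support there is no fixed cutoff equal to $1$ on $\{q\neq 0\}$. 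The missing step is to adjoin $\partial B_R(x_0)$ to the good set with the zero $k$-jet and to verify the Whitney compatibility between an interior good point $x$ and the boundary, i.e.\ the decay estimate $\lvert D^\beta q(x)\rvert\leq C\lambda\,\dist(x,\partial B_R(x_0))^{k-\lvert\beta\rvert}$ for $x\in H_\lambda$ and $\lvert\beta\rvert\leq k-1$. This is proved in the paper using the exterior measure-density property of the ball and a Poincar\'e inequality on $B_{2d}(x)$ with $d=\dist(x,\partial B_R(x_0))$, combined with the integral Taylor estimate. Related to this, the paper defines $H_\lambda$ by requiring $\cal M(\lvert\nabla^j q\rvert)\leq\lambda$ for \emph{all} $0\leq j\leq k$, because pointwise bounds on the lower-order Taylor coefficients at good points are needed both for the compatibility with the zero boundary jet and for the explicit sup-convolution extension formula; with your top-order-only definition these bounds are not immediate and must themselves be recovered through the boundary Poincar\'e argument you omit.
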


\begin{proof}
  Working componentwise we can assume $q$ is scalar-valued, and extending by zero we will also view $q \in \WW^{k,\varphi}(\mathbb R^n)$.
  and by translation and rescaling we will work with the unit ball $\B = \B_1(0)$ centred at the origin. 
  We will also work with the precise representatives of $q, \nabla q, \dots, \nabla^kq,$ and let $L_{\B}$ denote the set of Lebesgue points for every $\nabla^jq$, $0\leq j \leq k$.
  Then for $\lambda>0$ consider
  \begin{equation}
    H_{\lambda} = \left\{ x \in L_{\B} : \mathcal M(\lvert \D^{\beta} q\rvert) \leq \lambda \text{ for all } \lvert\beta \rvert\leq k \right\},
  \end{equation} 
  where $\mathcal M = \mathcal M_{\mathbb R^n}$ is the maximal operator defined as in \eqref{eq:max_def}.
  Note by boundedness of the maximal operator (using \cite[Theorem 1.2.1]{book:KokilashviliKrbec91} noting $\varphi \in \Delta_2 \cap \nabla_2$) we have
  \begin{equation}\label{eq:maximal_boundedness}
    \int_{\mathbb R^n} \varphi(\mathcal M(\lvert \nabla^jq\rvert)) \,\d x \leq C\int_{\B} \varphi(\lvert \nabla^jq\rvert) \,\d x
  \end{equation} 
  for each $0 \leq j \leq k$,
  from which it follows that $\mathcal L^n(\mathbb R^n \setminus H_{\lambda}) < \infty$.
  If $\mathcal L^n(\mathbb R^n \setminus H_{\lambda}) = 0$ then we can take $q_{\lambda} = q$,
  otherwise there exists a closed set $A_{\lambda} \subset H_{\lambda}$ for which $\mathcal L^n(\mathbb R^n \setminus A_{\lambda}) \leq 2 \mathcal L^n(\mathbb R^n \setminus H_{\lambda})$.
  We will now construct a $\WW^{k,\infty}$ map $q_{\lambda}$ satisfying $q=q_{\lambda}$ on $A_{\lambda}$, and show this satisfies all the claimed estimates.

  For this, we define for all $x \in A_{\lambda}$
  \begin{equation}
    P_x(y) = \sum_{\lvert\alpha\rvert \leq k-1} \frac{\D^{\alpha}q(x)}{\alpha!} (y-x)^{\alpha},
  \end{equation} 
  which by \cite[Theorem 3.4.1]{book:Ziemer89} satisfies
  \begin{equation}\label{eq:integral_qbound}
    \dashint_{\B_r(x)} \lvert \D^{\beta}q(y) - \D^{\beta}P_x(y)\rvert \,\d y \leq C\lambda r^{k-\lvert\beta\rvert}
  \end{equation} 
  for all $x \in A_{\lambda}$ and $\lvert \beta \rvert \leq k-1$.
  Hence by \cite[Theorem 3.5.7]{book:Ziemer89} we obtain the pointwise estimate
  \begin{equation}
    \lvert \D^{\beta}q(y) - \D^{\beta}P_x(y)\rvert \leq C \lambda \lvert x- y\rvert^{k-\lvert\beta\rvert}
  \end{equation} 
  for all $x,y \in A_{\lambda}$ and $\lvert\beta\rvert\leq k-1.$ For the boundary values we claim that
  \begin{equation}\label{eq:boundary_q_estimate}
    \lvert \D^{\beta}q(x)\rvert \leq C\lambda \dist(x,\partial \B)^{k-\lvert\beta\rvert}
  \end{equation} 
  holds for all $x \in A_{\lambda}$ and $\lvert\beta\rvert \leq k.$ To see this let $z \in \partial \B$ such that $\lvert x - z\rvert = d(x) = \dist(x,\partial \B),$ and note that $\B_{d(x)}(z) \subset \B_{2d(x)}(x).$ Since $\B$ is a ball it satisfies an external measure density condition of the from $\mathcal L^n(\B_r(z) \setminus \B) \geq \nu \mathcal L^n(\B_r(z))$ for all $z \in \partial \B$ and $0 < r< 1,$ so we have $\mathcal L^n(\B_{2d(x)}(x) \setminus  \B) \geq 2^{-n}\nu \mathcal L^n(\B_{2d(x)}(x)).$ Hence by a suitable Poincar\'e inequality (applied to the zero extension of $q$, using \cite[Corollary 4.5.2]{book:Ziemer89}) we have
  \begin{equation}
    \dashint_{\B_{2d(x)}(x)} \lvert \D^{\beta}q\rvert \,\d y \leq C \lambda \, d(x)^{k-\lvert\beta\rvert}.
  \end{equation} 
  We now combine this with \eqref{eq:integral_qbound} and use pointwise bounds for $P_x$, noting $x \in A_{\lambda}$, to estimate
  \begin{equation}
    \begin{split}
      \lvert \D^{\beta}q(x)\rvert &\leq \dashint_{\B_{2d}(x)} \lvert \D^{\beta}P_x(x) - \D^{\beta}P_x(y)\rvert \,\d y \\
                                 & \quad + \dashint_{\B_{2d}(x)} \lvert  \D^{\beta}P_x(y) - \D^{\beta}q(y)\rvert  + \lvert \D^{\beta}q(y)\rvert \,\d y 
                                 \leq C \lambda d(x)^{k-\lvert\beta\rvert},
    \end{split}
  \end{equation} 
  thereby establishing \eqref{eq:boundary_q_estimate}.
  Now we extend $P$ to $A_{\lambda} \cup \partial \B$ by setting $P_x = 0$ for all $x \in \partial \B,$ 
  so by the above estimates we have
  \begin{equation}\label{eq:qp_estimate}
    \lvert \D^{\beta}q(x) - \D^{\beta}P_x(y)\rvert \leq C_1 \lambda \lvert x - y\rvert^{k-\lvert\beta\rvert}
  \end{equation} 
  for all $x,y \in A_{\lambda} \cup \partial \B$.
  Hence letting $M > C_1$ to be determined, we define
  \begin{equation}
    \widetilde q_{\lambda}(y) = \sup_{x \in A_{\lambda} \cup \partial \B} \left(P_{x}(y) - M \lambda \lvert x- y\rvert^{k}\right),
  \end{equation} 
  which by choice of $M$ and \eqref{eq:qp_estimate} satisfies
  \begin{equation}
    \widetilde q_{\lambda}(x) = \twopartdef{q(x)}{\text{ if } x \in A_{\lambda},}{0}{\text{ if } x \in \partial \B.}
  \end{equation} 
  By increasing $M$ further if necessary, we claim this satisfies
  \begin{equation}\label{eq:firstextension_estimate}
    \lvert \widetilde q_{\lambda}(y) - P_x(y)\rvert \leq C\lambda \lvert x- y\rvert^k
  \end{equation} 
  for all $x \in A_{\lambda} \cup \partial \B$ and $y \in \overline \B.$ Indeed for $z \in A_{\lambda} \cup \partial \B$ we have
  \begin{equation}
    \begin{split}
      &(P_z(y) - M\lambda \lvert y - z\rvert^k) - P_x(y) \\
      &\quad \leq \lvert P_x(y) - P_z(y)\rvert - M\lambda \lvert y - z\rvert^k  \\
      &\quad \leq C\lambda \sum_{j=0}^{k-1} \lvert x - z\rvert^{k-j} \left(\lvert x - y\rvert^j  + \lvert z - y\rvert^j\right) - M \lambda \lvert y - z\rvert^k \\
      &\quad \leq C \lambda \lvert y - x\rvert^k + (C - M)\lambda \lvert y - z\rvert^k , 
    \end{split}
  \end{equation} 
  where we used Young's inequality and the fact that $\lvert x-z \rvert\leq \lvert x - y\rvert + \lvert y - z\rvert.$ Choosing $M \geq C$ and extremising over $z$ implies the estimate (\ref{eq:firstextension_estimate}), noting the lower bound follows from the definition.

  Now we can apply the extension result \cite[Theorem 3.6.2]{book:Ziemer89} to deduce the existence of $q_{\lambda} \in \WW^{1,\infty}_0( \B)$ such that $\norm{\nabla^kq}_{\LL^{\infty}( \B,\bb M_k)} \leq C\lambda,$ and such that $q = q_{\lambda}$ on $A_{\lambda}.$

  It remains to establish the estimates (\ref{eq:lipschitz_truncation1}), (\ref{eq:lipschitz_truncation2}). For (\ref{eq:lipschitz_truncation2}) we apply Markov's inequality to estimate
  \begin{equation}
    \begin{split}
      &\varphi(\lambda) \mathcal L^n(\{q \neq q_{\lambda}\}) 
        \leq 2\varphi(\lambda) \mathcal L^n(\mathbb R \setminus H_{\lambda}) \\
        &\quad\leq  C \sum_{j=0}^k\int_{\bb R^n} \varphi(\mathcal M(\lvert\nabla^jq\rvert)) \,\d x 
        \leq C \sum_{j=0}^k \int_{\B} \varphi(\lvert \nabla^j q\rvert) \,\d x
        \leq C \int_{\B} \varphi(\lvert\nabla^kq\rvert) \,\d x,
    \end{split}
  \end{equation} 
  where we have used \eqref{eq:maximal_boundedness} and the Poincar\'e inequality (Lemma \ref{lem:poincare_sobolev}) in the last two lines. For (\ref{eq:lipschitz_truncation1}) we can estimate
  \begin{equation}
    \int_{\B} \varphi(\lvert\nabla^kq_{\lambda}\rvert) \,\d x \leq \int_{\B} \varphi(\lvert\nabla^k q\rvert) \,\d x + \int_{\{q \neq q_{\lambda}\}} \varphi\left(\norm{\nabla^k q_{\lambda}}_{\LL^{\infty}(\B,\bb M_k)}\right) \,\d x,
  \end{equation} 
  from which we can conclude noting $\norm{\nabla^k q_{\lambda}}_{\LL^{\infty}(\B,\bb M_k)}\leq C\lambda$ and applying (\ref{eq:lipschitz_truncation2}).
\end{proof}

\section{Proof of the regularity theorem}

\subsection{Caccioppoli inequality of the second kind}

We will begin by establishing a Caccioppoli inequality of the second kind in this general growth setting, adapting the estimate of \textsc{Evans} \cite{article:Evans86}. In the case $k=1$, this argument in the Orlicz setting can be found in \cite{article:Dieningetal12}, and we will show this extends to the case of general $k.$

For this we fix $z_0 \in \bb M_k,$ and following \cite{article:AcerbiFusco87} we introduce the shifted integrand
\begin{equation}\label{eq:shifted_F}
  F_{z_0}(z) = F(z_0+z) - F(z_0) - F'(z_0)z.
\end{equation}
Then for each $M \geq 1,$ if $\lvert z_0\rvert  \leq M$ by distinguishing between the cases when $\lvert z_0 \rvert \leq M+1$ and $\lvert z_0 \rvert \geq M+1$ we have
\begin{align}
  \lvert F_{z_0}(z)\rvert  &\leq C\,\varphi_{1+\lvert z_0 \rvert}(\lvert z\rvert ),\label{eq:shifted_growth1} \\
  \lvert F_{z_0}'(z)\rvert  &\leq C\,\varphi_{1+\lvert z_0 \rvert}'(\lvert z\rvert ),\label{eq:shifted_growth2}
\end{align} 
where $C>0$ depends on $n, N,k,K, \Delta_2(\varphi)$ and $G(M) := \sup_{\lvert z\rvert  \leq 2M+1} \lvert F''(z)\rvert .$ Here the second estimate follows from the first by rank-one convexity of $F_{z_0}$ (implied by \ref{eq:F_quasiconvex}).

\begin{lem}[Caccioppoli inequality]\label{lem:caccioppoli}
  Let $u \in \WW^{k,\varphi}(\Omega,\bb R^N)$ be a minimiser of (\ref{eq:functional_autonomous}), where the integrand $F$ satisfies Hypotheses \ref{hyp:generalgrowth_F}, and let $M>0.$ Then if $a \colon \bb R^n \to \bb R^N$ is a $k$\th order polynomial such that $\lvert \nabla^k a\rvert \leq M,$ there for any $\B_R(x_0) \subset \Omega$ we have the estimate
  \begin{equation}
  \dashint_{\B_{R/2}(x_0)} \varphi_{1+M}(\lvert \nabla^k u - \nabla^k a\rvert) \,\d x \leq  C\,\dashint_{\B_{R}(x_0)} \varphi_{1+M}\left( \frac{\lvert u-a\rvert}{R^k} \right)  \,\d x.
  \end{equation} 
  where $C = C(n,N,k,K,\nu,\Delta_2(\varphi),M,G(M))>0$.
\end{lem}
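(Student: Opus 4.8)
The strategy is the classical "Caccioppoli inequality of the second kind" argument of Evans, adapted to the Orlicz setting as in Diening--Lengeler--Stroffolini--Verde, now for general order $k$. Since the functional is autonomous we may freely replace $u$ by $v = u - a$, which changes the integrand $F(\nabla^k u)$ to the shifted integrand $F_{z_0}(\nabla^k v)$ (up to affine terms that do not affect minimality) where $z_0 = \nabla^k a$, $\lvert z_0\rvert \le M$; thus $v$ minimises $\int F_{z_0}(\nabla^k v)\,\d x$ and we must prove $\dashint_{B_{R/2}} \varphi_{1+M}(\lvert\nabla^k v\rvert) \le C\dashint_{B_R}\varphi_{1+M}(\lvert v\rvert/R^k)$. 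We may also assume $x_0 = 0$ and, by scaling, reduce to a convenient radius. Note $\varphi_{1+M} \sim \varphi_{1+\lvert z_0\rvert}$ with constants depending only on $M$ and $\Delta_2(\varphi)$, so we work with $\psi := \varphi_{1+\lvert z_0\rvert}$ throughout.

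The core step is the testing argument. Fix radii $R/2 \le s < t \le R$ and a cutoff $\eta \in C_c^\infty(B_t)$ with $\eta \equiv 1$ on $B_s$, $\lvert\nabla^j\eta\rvert \lesssim (t-s)^{-j}$ for $0 \le j \le k$. Split $v = \varphi_1 + \varphi_2$ where $\varphi_1 = \eta v$ (the "inner" part, a valid test function in $W_0^{k,\varphi}(B_t,\mathbb R^N)$) and $\varphi_2 = (1-\eta)v$. The strict $W^{k,\varphi}$-quasiconvexity applied to $\varphi_1$ gives
\begin{equation*}
  \nu \int_{B_t} \psi(\lvert\nabla^k \varphi_1\rvert)\,\d x \le \int_{B_t} F_{z_0}(\nabla^k\varphi_1) - F_{z_0}(0)\,\d x,
\end{equation*}
and then minimality of $v$ (used on $B_t$, with competitor $v - \varphi_1 = \varphi_2$) converts the right side into an integral of $F_{z_0}(\nabla^k v) - F_{z_0}(\nabla^k\varphi_2)$ over $B_t \setminus B_s$ (the annulus, since on $B_s$ we have $\varphi_1 = v$, $\varphi_2 = 0$, so the integrands cancel there after also invoking that $F_{z_0}(\nabla^k v) \ge F_{z_0}(0)$ is not quite what's needed — rather one writes $\int_{B_t}F_{z_0}(\nabla^k\varphi_1) \le \int_{B_t}F_{z_0}(\nabla^k v) + \int_{\text{annulus}}[F_{z_0}(\nabla^k\varphi_1) - F_{z_0}(\nabla^k v)]$ and on the annulus $\nabla^k\varphi_1 = \nabla^k(v - \varphi_2)$). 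Using the growth bound $\lvert F_{z_0}(z)\rvert \le C\psi(\lvert z\rvert)$ from \eqref{eq:shifted_growth1}, the left side is bounded below by $\nu\int_{B_s}\psi(\lvert\nabla^k v\rvert)$ and the right side by $C\int_{B_t\setminus B_s}\psi(\lvert\nabla^k\varphi_1\rvert) + \psi(\lvert\nabla^k\varphi_2\rvert)\,\d x$. On the annulus, the Leibniz rule gives $\lvert\nabla^k(\eta v)\rvert \lesssim \sum_{j=0}^k (t-s)^{-(k-j)}\lvert\nabla^j v\rvert$ and similarly for $(1-\eta)v$, so by the $\Delta_2$-condition (with exponent depending on $\Delta_2(\varphi)$, hence on $M$) both annulus terms are controlled by $C\sum_{j=0}^k \int_{B_t\setminus B_s}\psi\big((t-s)^{-(k-j)}\lvert\nabla^j v\rvert\big)\,\d x$.

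Now add $C\int_{B_s}\psi(\lvert\nabla^k v\rvert)$ to both sides ("hole-filling") to get, with $\theta = C/(\nu+C) < 1$,
\begin{equation*}
  \int_{B_s}\psi(\lvert\nabla^k v\rvert)\,\d x \le \theta\int_{B_t}\psi(\lvert\nabla^k v\rvert)\,\d x + C\sum_{j=0}^{k-1}\int_{B_t\setminus B_s}\psi\Big(\tfrac{\lvert\nabla^j v\rvert}{(t-s)^{k-j}}\Big)\,\d x.
\end{equation*}
The iteration lemma of Giaquinta (the standard one for functions $h(s) \le \theta h(t) + (\text{stuff})$ with $\theta < 1$) absorbs the first term and yields $\int_{B_{R/2}}\psi(\lvert\nabla^k v\rvert) \le C\sum_{j=0}^{k-1}\int_{B_R}\psi(\lvert\nabla^j v\rvert/R^{k-j})\,\d x$ — here the lower-order derivatives $\nabla^j v$ for $j \le k-1$ enter. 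The final step is to absorb those lower-order terms: first apply the interpolation estimate (Lemma \ref{lem:derivative_interpolation}) to trade $\int_{B_R}\psi(R^{-(k-j)}\lvert\nabla^j v\rvert)$ against $\varepsilon\int_{B_R}\psi(\lvert\nabla^k v\rvert) + C_\varepsilon\int_{B_R}\psi(R^{-k}\lvert v\rvert)$ — but since we need to absorb $\int_{B_R}\psi(\lvert\nabla^k v\rvert)$ and that is on the larger ball, one must instead run the interpolation at each intermediate radius inside the iteration, or equivalently interpolate on $B_t$ before the hole-filling step and choose the interpolation parameter small. Concretely: apply Lemma \ref{lem:derivative_interpolation} with small parameter $\delta$ on $B_t$ to each annulus term, producing $\varepsilon\int_{B_t}\psi(\lvert\nabla^k v\rvert)$ (absorbed into the $\theta\int_{B_t}$ term, enlarging $\theta$ but keeping it $<1$ for $\varepsilon$ small) plus $C_\varepsilon (t-s)^{-\text{(power)}}\int_{B_t}\psi((t-s)^{-k}\lvert v\rvert)$, which after the Giaquinta iteration becomes $C\dashint_{B_R}\psi(R^{-k}\lvert v\rvert)$. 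Undoing the shift $v = u-a$ and using $\psi \sim \varphi_{1+M}$ gives the claim.

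The main obstacle is the bookkeeping around the lower-order derivative terms: unlike the $k=1$ case of Diening et al., where the annulus integrand only involves $\lvert v\rvert/(t-s)$ directly, here one gets all intermediate $\lvert\nabla^j v\rvert$, $1 \le j \le k-1$, and these must be interpolated away carefully so that (i) the coefficient of $\int_{B_t}\psi(\lvert\nabla^k v\rvert)$ stays strictly below $1$ after absorption, and (ii) the scaling in $R$ comes out right (the powers $(t-s)^{-(k-j)}$ must combine correctly with the interpolation scaling $\delta^{k-j}$ and with the Giaquinta iteration's handling of the $(t-s)$ weights). The $\nabla_2$-condition is essential here precisely because the interpolation Lemma \ref{lem:derivative_interpolation} requires it to make the absorption $C\psi(\mu\,\cdot) \le \frac12\psi(\cdot)$ work. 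Everything else — the quasiconvexity-plus-minimality comparison, the Leibniz expansion, the hole-filling — is a direct transcription of the $k=1$ argument using that $\Delta_2$ makes $\psi$ behave polynomially on both ends.
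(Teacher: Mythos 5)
Your proposal is correct and follows essentially the same route as the paper's proof: the shifted integrand $F_{\nabla^k a}$, the quasiconvexity-plus-minimality comparison with the cutoff competitor, the Leibniz expansion and hole-filling, the interpolation estimate (Lemma \ref{lem:derivative_interpolation}) applied at each intermediate radius to absorb the lower-order derivative terms before iterating, and the final iteration adapted from \cite{article:Dieningetal12}. You also correctly identified the two genuine subtleties of the higher-order case (where to run the interpolation so the absorption constant stays below $1$, and the role of $\nabla_2$ in that absorption), so nothing further is needed.
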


\begin{proof}
  We will suppress the $x_0$ dependence to simplify notation. Fix $0<t<s<R,$ and let $\eta \in \CC^{\infty}_c(\B_R)$ be a radial cut-off such that $\mathbbm{1}_{\B_t} \leq \eta \leq \mathbbm{1}_{\B_s}$ and $\lvert \nabla^j\eta\rvert \leq \frac{C}{(s-t)^j}$ for each $1 \leq j \leq k.$ 
  Given $a$, we set $w = u-a,$ $\widetilde F = F_{\nabla^k a}$ and $\widetilde\varphi = \varphi_{1+\lvert\nabla^k a\rvert}$, noting that $\widetilde\varphi\sim\varphi_{1+M}$. 
  Observe that minimality of $u$ implies that   
  \begin{equation}
    \int_{\B_R} \widetilde F(\nabla^k w ) \,\d x \leq \int_{\B_R} \widetilde F(\nabla^kw + \nabla^k\xi) \,\d x
  \end{equation} 
  for any $\xi \in \WW^{k,\varphi}_0(\B_R)$, using that $\int_{\B_R} F'(\nabla^k a )\nabla^k\xi \,\d x = 0$.
  Then by the strict quasiconvexity condition \ref{eq:F_quasiconvex} we have
  \begin{equation}
    \begin{split}
      \nu\int_{\B_s} \widetilde\varphi(\lvert \nabla^k(\eta w)\rvert) \,\d x 
      &\leq \int_{\B_s} \widetilde F(\nabla^k(\eta w)) \,\d x \\
      &\leq \int_{\B_s} \widetilde F(\nabla^k w) \,\d x + \int_{\B_s} \widetilde F(\nabla^k(\eta w)) - \widetilde F(\nabla^kw) \,\d x.
   \end{split}
  \end{equation} 
  Now using the minimising property of $w$ and noting $\eta w = w$ on $\B_t$,
  \begin{equation}\label{eq:caccioppoli_middle1}
    \begin{split}
      &\int_{\B_t} \widetilde\varphi(\lvert \nabla^k(\eta w)\rvert) \,\d x \\
      &\quad\leq \frac1{\nu}\int_{\B_s} \widetilde F(\nabla^k((1-\eta)w)) \,\d x + \frac1{\nu}\int_{\B_s} \widetilde F(\nabla^k(\eta w)) - \widetilde F(\nabla^k w) \,\d x \\
      &\quad\leq C \int_{\B_s \setminus \B_t} \widetilde\varphi(\lvert \nabla^k((1-\eta)w)\rvert) + \widetilde\varphi(\lvert \nabla^k (\eta w) \rvert) + \widetilde\varphi(\lvert \nabla^k w\rvert) \,\d x \\
      &\quad\leq C \int_{\B_s \setminus \B_t} \widetilde\varphi(\lvert \nabla^k w\rvert) \,\d x + C \sum_{j=0}^{k-1}\int_{\B_s} \widetilde\varphi\left( \frac {\lvert\nabla^j w \rvert}{(s-t)^{k-j}} \right)\,\d x,
   \end{split}
  \end{equation} 
  using the $\Delta_2$-condition. By filling the hole, setting $\theta = \frac{C}{C+1} \in (0,1)$ gives
  \begin{equation}\label{eq:caccioppoli_middle2}
    \int_{\B_t} \widetilde\varphi(\lvert \nabla w\rvert) \,\d x \leq \theta \int_{\B_s} \widetilde\varphi(\lvert\nabla w\rvert) \,\d x + C_{\ast}\sum_{j=0}^{k-1}\int_{\B_s} \widetilde\varphi\left( \frac {\lvert\nabla^j w \rvert}{(s-t)^{k-j}} \right)\,\d x.
  \end{equation} 
  Now by the interpolation estimate (Lemma \ref{lem:derivative_interpolation}) we estimate
  \begin{equation}\label{eq:caccioppoli_middle3}
    \begin{split}
      C_{\ast}\sum_{j=0}^{k-1}\int_{\B_s} \widetilde\varphi\left( \frac {\lvert\nabla^j w \rvert}{(s-t)^{k-j}} \right)\,\d x 
      &\leq \frac{1-\theta}2 \int_{\B_s} \widetilde\varphi(\lvert\nabla^kw\rvert) \,\d x + C \int_{\B_s} \widetilde\varphi\left( \frac{\lvert w\rvert}{(s-t)^k} \right) \,\d x,
    \end{split}
  \end{equation} 
  allowing us to absorb the intermediate terms. Finally by an iteration argument (adapting \cite[Lemma 3.1]{article:Dieningetal12}) we deduce that
  \begin{equation}
    \int_{\B_{R/2}} \widetilde\varphi(\lvert\nabla^k w\rvert) \,\d x \leq C \int_{\B_R} \widetilde\varphi\left( \frac {\lvert w\rvert}{R^k} \right) \,\d x,
  \end{equation} 
  as required.
\end{proof}

\subsection{Harmonic approximation}

Our second ingredient will involve approximation by solutions $h$ to the linearised equation, adapting a recent strategy of \textsc{Gmeineder \& Kristensen} \cite{article:GmeinederKristensen19}, which has also been applied for instance in \cite{article:Gmeineder21,article:Franceschini19,article:Irving21,article:BarlinEtAl24}. A key feature of this argument is that we only use the Legendre-Hadamard ellipticity condition (\ref{eq:F_elliptic}) derived in Remark \ref{rem:F_elliptic} and that $u$ is $F$-extremal, so it can be applied more generally to infer regularity in situations where a suitable Caccioppoli inequality holds. This was exploited by the author in \cite{article:Irving21}, and we will also use this observation in Section \ref{sec:local_min} for the case of strong local minima.

We will also need some additional estimates on the integrand $F.$ Considering the shifted integrand $F_{z_0}$ defined in (\ref{eq:shifted_F}), where $z_0 \in \bb M_k$ with $\lvert z_0\rvert\leq M,$ we recall that $F_{z_0}''(0) = F''(z_0)$ satisfies the Legendre-Hadamard ellipticity bound $(\ref{eq:F_elliptic})$ by Remark \ref{rem:F_elliptic}. Further we also need the perturbation estimate
\begin{equation}\label{eq:F_pertubationestimate}
  \lvert F_{z_0}'(z) - F_{z_0}''(0)z\rvert  \leq C \omega_M(\lvert z\rvert )\left( \lvert z \rvert + \varphi_{\lvert z_0 \rvert}'(\lvert z\rvert)\right).
\end{equation} 
 Here $\omega_M$ denotes the modulus of continuity of $F''$ on $\{\lvert z\rvert  \leq 2M\},$ that is $\omega_M$ is a non-negative non-decreasing continuous concave function $[0,\infty) \to [0,1]$ satisfying $\omega_M(0)=0$ and
\begin{equation}
  \lvert F''(z) - F''(w)\rvert  \leq 2G(M) \omega_M(\lvert z-w\rvert )
\end{equation} 
for all $z,w \in \bb M_k$ such that $\lvert z\rvert ,\lvert w\rvert \leq 2M+1.$ The claimed estimate can be obtained by combining the above with the growth bound (\ref{eq:shifted_growth2}).

\begin{lem}[Harmonic approximation]\label{lem:harmonic_approximation}
  Let $u \in \WW^{k,\varphi}(\Omega,\bb R^N)$ be $F$-extremal, where $F$ satisfies Hypotheses \ref{hyp:generalgrowth_F}, and let $M>0,$ $\delta \in (0,1).$ Then if $a \colon \bb R^n \to \bb R^N$ is a $k$\th order polynomial such that $\lvert\nabla^k a\rvert\leq M,$ for any $\B_R(x_0) \subset \Omega$ the Dirichlet problem
  \begin{equation}\label{eq:harmonic_approximant}
    \pdeproblem{(-1)^k \nabla^k : F''(\nabla^k a)\nabla^k h}{0}{\B_R(x_0),}{\partial_{\nu}^jh}{\partial_{\nu}^j(u-a)}{\partial \B_R(x_0),\ \  1 \leq j \leq k-1,}
  \end{equation} 
  admits a unique solution $h \in (u-a)+\WW^{k,\varphi}_0(\Omega,\bb R^N),$ which satisfies the estimates
  \begin{equation}\label{eq:harmonic_estimate1}
    \dashint_{\B_R(x_0)} \varphi_{1+M}(\lvert\nabla^k h\rvert) \,\d x  \leq C\,\dashint_{\B_R(x_0)} \varphi_{1+M}(\lvert\nabla^k(u-a)\rvert) \,\d x,
  \end{equation} 
  and
  \begin{equation}\label{eq:harmonic_approximation}
    \begin{split}
      &\dashint_{\B_R(x_0)} \varphi_{1+M}\left( \frac{\lvert \nabla^{k-1}(u-a-h)\rvert }{R} \right) \,\d x \leq \delta\,\dashint_{\B_R(x_0)} \varphi_{1+M}\left(\nabla^k(u-a)\right) \,\d x \\
      &\qquad +C_{\delta}\,\gamma_M\left( \dashint_{\B_R} \varphi_{1+M}\left( \lvert\nabla^k(u-a)\rvert \right)  \,\d x \right)\dashint_{\B_R} \varphi_{1+M}\left( \lvert\nabla^k(u-a)\rvert \right)  \,\d x.
    \end{split}
  \end{equation} 
  Here $\gamma_M : [0,\infty) \to [0,\infty)$ is a non-decreasing continuous function such that $\gamma_M(0)=0$ and we have $C_{\delta} = C(n,N,k,K,\nu,\Delta_2(\varphi),\nabla_2(\varphi),M,G(M),\delta)>0.$
\end{lem}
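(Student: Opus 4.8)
The plan is to follow the duality-based harmonic approximation strategy of Gmeineder--Kristensen \cite{article:GmeinederKristensen19}, adapted to the Orlicz setting via the Lipschitz truncation lemma (Proposition \ref{prop:lipschitz_truncation}). Throughout we write $w = u-a$, $\bb A = F''(\nabla^k a)$, which by Remark \ref{rem:F_elliptic} is uniformly Legendre--Hadamard elliptic with constants depending only on $\nu, N, \Delta_2(\varphi), M$, and $\widetilde\varphi = \varphi_{1+M}$. Existence and uniqueness of $h$, together with the energy bound \eqref{eq:harmonic_estimate1}, follow immediately from Proposition \ref{prop:elliptic_solvability}: the difference $h - w$ solves the homogeneous boundary-value problem with right-hand side $(-1)^k\nabla^k\!:\!\bb A\nabla^k w \in W^{-k,\widetilde\varphi}$, and after subtracting off the polynomial $a$ (which is $\bb A$-harmonic since $\nabla^k a$ is constant) the estimate \eqref{eq:elliptic_modular} applied on $B_R(x_0)$, using scale invariance and $\widetilde\varphi \in \Delta_2\cap\nabla_2$, gives \eqref{eq:harmonic_estimate1}.

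The main work is \eqref{eq:harmonic_approximation}. By the Poincar\'e--Sobolev inequality (Remark \ref{rem:higher_poincaresobolev}) applied to $u-a-h$, it suffices to bound $\dashint_{B_R}\widetilde\varphi(\lvert\nabla^k(u-a-h)\rvert)$. Set $g := u - a - h \in W^{k,\widetilde\varphi}_0(B_R(x_0),\bb R^N)$. Since $u$ is $F$-extremal and $h$ solves \eqref{eq:harmonic_approximant}, for any test function $\psi \in W^{k,\widetilde\varphi}_0(B_R(x_0),\bb R^N)$ we have
\begin{equation*}
  \int_{B_R} \bb A\nabla^k g : \nabla^k\psi \,\d x = \int_{B_R} \left( F''(\nabla^k a)\nabla^k w - F'_{\nabla^k a}(\nabla^k w)\right) : \nabla^k\psi \,\d x =: \int_{B_R} E : \nabla^k\psi \,\d x,
\end{equation*}
where by the perturbation estimate \eqref{eq:F_pertubationestimate} the "error field" $E$ satisfies $\lvert E\rvert \leq C\,\omega_M(\lvert\nabla^k w\rvert)(\lvert\nabla^k w\rvert + \widetilde\varphi'(\lvert\nabla^k w\rvert))$. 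To estimate the modular of $\nabla^k g$ by duality, one uses the characterisation that $\dashint_{B_R}\widetilde\varphi(\lvert\nabla^k g\rvert)$ is controlled by testing against $\psi$ ranging over a ball in $W^{k,\widetilde\varphi^*}$-type norm; the key point is that $g$ itself is not admissible as a test function in the Orlicz setting because $\nabla^k g$ need not lie in the right dual space, so instead one applies Proposition \ref{prop:lipschitz_truncation} to the candidate test function at level $\lambda$, obtaining $g_\lambda \in W^{k,\infty}_0$ with the three truncation estimates. Using $g_\lambda$ as test function, the bad set $\{g_\lambda \neq g\}$ contributes an error controlled via \eqref{eq:lipschitz_truncation2} and the absolute continuity of the integral, while on the good set one pairs $E$ against $\nabla^k g_\lambda$ and splits using the shifted Young inequality \eqref{eq:delta_young}: the $\delta\widetilde\varphi(\lvert\nabla^k w\rvert)$ piece yields the first term on the right of \eqref{eq:harmonic_approximation}, and the conjugate piece $\delta\widetilde\varphi^*(\lvert E\rvert/\delta)$ is where the modulus $\omega_M$ produces the factor $\gamma_M(\dashint_{B_R}\widetilde\varphi(\lvert\nabla^k w\rvert))$ after using Jensen's inequality and concavity of $\omega_M$ (here one uses the equivalences in Lemma~(c) to absorb $\widetilde\varphi^*(\widetilde\varphi'(\lvert\nabla^k w\rvert)) \sim \widetilde\varphi(\lvert\nabla^k w\rvert)$ and a standard splitting of $\{\lvert\nabla^k w\rvert \leq 1\}$ versus its complement to handle the quadratic part of the shift). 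Optimising over the truncation level $\lambda$ (balancing the $\lambda$-dependent terms against $\varphi(\lambda)^{-1}\cdot(\text{modular})$) and then choosing $\delta$ completes the bound.

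The main obstacle is making the duality argument rigorous in the Orlicz setting: unlike the $L^p$ case one cannot simply dualise with the gradient of $g$, and one must carefully set up the test-function class and verify that the Lipschitz-truncated competitor $g_\lambda$ captures the correct modular of $\nabla^k g$ up to the claimed error terms, while tracking that every constant depends only on the permitted quantities $n,N,k,K,\nu,\Delta_2(\varphi),\nabla_2(\varphi),M,G(M),\delta$. A secondary technical point is the precise extraction of the function $\gamma_M$: one must check that the nonlinearity coming from $\omega_M$ (a concave modulus composed with $\lvert\nabla^k w\rvert$, then fed into $\widetilde\varphi^*$) does collapse, via Jensen and the $\Delta_2\cap\nabla_2$ equivalences, into a single non-decreasing function of the excess $\dashint_{B_R}\widetilde\varphi(\lvert\nabla^k w\rvert)$ vanishing at $0$, with no residual dependence on $R$ or $x_0$ beyond the excess itself.
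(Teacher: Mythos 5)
Your setup, the existence/uniqueness claim, and the derivation of \eqref{eq:harmonic_estimate1} from Proposition \ref{prop:elliptic_solvability} all match the paper. The main estimate, however, contains a genuine gap, and it sits exactly at the point you flag as "the main obstacle". Your reduction via Poincar\'e to bounding the \emph{top-order} modular $\dashint_{B_R}\widetilde\varphi(\lvert\nabla^k(u-a-h)\rvert)$ is a dead end: that quantity is not small and cannot be bounded by the right-hand side of \eqref{eq:harmonic_approximation}. Indeed, the best available bound for it is the Calder\'on--Zygmund-type estimate \eqref{eq:elliptic_modular} applied to the equation $\nabla^k\!:\!\bb A\nabla^k(w-h)=\nabla^k\!:\!E$, which yields $\dashint\widetilde\varphi(\lvert E\rvert)$ with $\lvert E\rvert\lesssim\omega_M(\lvert\nabla^kw\rvert)\left(\lvert\nabla^kw\rvert+\widetilde\varphi'(\lvert\nabla^kw\rvert)\right)$; but $\widetilde\varphi(\widetilde\varphi'(t))$ is not comparable to $\widetilde\varphi(t)$ (it is $\widetilde\varphi^*(\widetilde\varphi'(t))$ that is) --- for $\varphi(t)=t^p$ with $p>2$ it grows like $t^{p(p-1)}$ --- so this route does not close. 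The entire point of \eqref{eq:harmonic_approximation} is the gain of one derivative: only the lower-order quantity $\nabla^{k-1}(u-a-h)/R$ admits a small bound.

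The second, linked, problem is that you apply the Lipschitz truncation to $g=u-a-h$ and test the equation with $g_\lambda$. With only Legendre--Hadamard (not pointwise) ellipticity, $\int\bb A\nabla^kg:\nabla^kg_\lambda$ is not coercive (the Plancherel argument needs the same $W^{k,2}_0$ function in both slots), and in any case such a pairing produces at best an $L^2$-type quantity rather than the Orlicz modular. The paper's mechanism is different: one solves a \emph{dual} problem $(-1)^k\nabla^k\!:\!\bb A\nabla^kq=(-1)^{k-1}\nabla^{k-1}\!:\!g$ with the specially chosen datum $g=\widetilde\varphi\left(\lvert\nabla^{k-1}(w-h)\rvert/R\right)\nabla^{k-1}(w-h)/\lvert\nabla^{k-1}(w-h)\rvert^{2}$, so that pairing $q$ with the equation for $w-h$ produces exactly $\dashint\widetilde\varphi(\lvert\nabla^{k-1}(w-h)\rvert/R)$ on the left (this is how modular, rather than norm, duality is achieved). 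Because this datum is in divergence form of order $k-1$, Lemma \ref{lem:negative_sobolev} upgrades it to $-\nabla\cdot G$ with $G\in L^{\varphi^{n/(n-1)}}$, whence $\nabla^kq\in L^{(\widetilde\varphi^*)^{n/(n-1)}}$; it is the Lipschitz truncation of this dual solution $q$, together with that extra exponent and H\"older's inequality, that makes the bad-set contribution carry the factor $\left(\cal L^n(\{q\neq q_\lambda\})/\cal L^n(B_R)\right)^{1/n}$ from \eqref{eq:lipschitz_truncation2} and hence be absorbable after optimising in $\lambda$. Your remaining ingredients (the perturbation estimate \eqref{eq:F_pertubationestimate}, Young's inequality \eqref{eq:delta_young}, Jensen with the concave $\omega_M\circ\widetilde\varphi^{-1}$) are the right ones, but they must be deployed on the pairing $\int E:\nabla^kq_\lambda$ arising from the dual problem, not on a truncation of $w-h$.
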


\begin{proof}
  Suppressing the $x_0$-dependence, set $w=u-a,$ $\widetilde F = F_{\nabla^k a},$ and $\widetilde\varphi = \varphi_{1+\lvert\nabla^k a\rvert}$ noting that $\widetilde\varphi \sim \varphi_{1+M}.$ We know by Proposition \ref{prop:elliptic_solvability} that a unique solution $h$ exists and satisfies the modular estimate (\ref{eq:harmonic_estimate1}), and since $w$ is $\widetilde F$-extremal, using the respective weak formulations (\ref{eq:F_extremal}), (\ref{eq:harmonic_approximant}) we have
  \begin{equation}\label{eq:linearisation_step1}
    \int_{\B_R} \widetilde F''(0)\nabla^k(w-h) : \nabla^k q \,\d x = \int_{\B_R} (\widetilde F''(0)\nabla^k w - \widetilde F'(\nabla^k w)) : \nabla^k q \,\d x,
  \end{equation} 
  for $q \in \WW^{k,\infty}_0(\B_R,\bb R^N).$ Following \cite{article:GmeinederKristensen19} we wish to choose our test function so we obtain $\widetilde\varphi(R^{-1}\lvert\nabla^{k-1}( w- h)\rvert)$ on the left-hand side, so to achieve this we choose $q$ to be the solution of the dual problem
  \begin{equation}\label{eq:dualproblem}
    \pdeproblem{(-1)^k \nabla^k: F''(\nabla^k a)\nabla^k q}{(-1)^{k-1}\nabla^{k-1}: g}{\B_R,}{\partial_{\nu}^jq}{0}{\partial \B_R, \ \ 0 \leq j \leq k-1,}
  \end{equation} 
  where
  \begin{equation}
    g = \widetilde\varphi\left(\frac{\lvert \nabla^{k-1}(w- h)\rvert}{R} \right)\frac{\nabla^{k-1}(w-h)}{\lvert \nabla^{k-1}(w-h)\rvert^2}.
  \end{equation}
  Since $t \mapsto \frac{\widetilde\varphi(t)}t$ is an $N$-function satisfying $\widetilde\varphi^*(\widetilde\varphi(t)/t) \sim \widetilde\varphi(t)$ uniformly in $t,$ we have $g$ is well-defined and satisfies the estimate
  \begin{equation}\label{eq:g_estimates}
    \dashint_{\B_R} \widetilde\varphi^*(R\lvert g\rvert)\,\d x \leq C\,\dashint_{\B_R} \widetilde\varphi\left(\frac{\lvert \nabla^{k-1}(w-h)\rvert}{R} \right) \,\d x.
  \end{equation}  
  Hence by Lemma \ref{lem:negative_sobolev} we can write $g = - \nabla \cdot G$ with $G \in \LL^{\varphi^{\frac{n}{n-1}}}(\B_R,\bb M_k),$ so then Proposition \ref{prop:elliptic_solvability} gives the existence of a unique $q \in \WW^{k,\varphi^{\frac{n}{n-1}}}_0(\B_R,\bb R^N)$ and a corresponding modular estimate, which we combine with (\ref{eq:g_estimates}) to obtain 
  \begin{equation}\label{eq:q_mainestimates}
    \left( \dashint_{\B_R} \widetilde\varphi^*\left( \lvert\nabla^kq\rvert\right)^{\frac{n}{n-1}} \,\d x \right)^{\frac{n-1}n} \leq C \,\dashint_{\B_R} \widetilde\varphi\left( \frac{\lvert \nabla^{k-1}(w-h)\rvert}{R} \right) \,\d x.
  \end{equation} 
  In general we cannot use $q$ in \eqref{eq:linearisation_step1} since it may not lie in $\WW^{k,\infty}(\B_R,\mathbb R^N)$, so we will need to take a higher order Lipschitz truncation. Letting $\lambda>0$ to be determined, applying Proposition \ref{prop:lipschitz_truncation} with the $N$-function $(\widetilde\varphi^*)^{\frac{n}{n-1}}$ gives $q_{\lambda} \in \WW^{k,\infty}(\B_R,\bb R^N)$ satisfying $\norm{\nabla^kq_{\lambda}}_{\LL^{\infty}(\B_R,\bb M_k)} \leq C\lambda$ and
  \begin{align}
    \dashint_{\B_R} \widetilde\varphi^*\left(\lvert\nabla^k q_{\lambda}\rvert\right)^{\frac{n}{n-1}} \,\d x &\leq C\,\dashint_{\B_R} \widetilde\varphi^*\left(\lvert\nabla^k q\rvert\right)^{\frac{n}{n-1}} \,\d x,\label{eq:qlambda_psibound} \\ 
    \widetilde\varphi^*(\lambda)^{\frac{n}{n-1}}\mathcal L^n\left(\left\{ x \in \B_R : q(x) \neq q_{\lambda}(x) \right\}\right)^{\frac{n}{n-1}} &\leq C \int_{\B_R} \widetilde\varphi^*\left(\lvert \nabla^k q\rvert\right)^{\frac{n}{n-1}} \,\d x. \label{eq:qlambda_measure}
  \end{align} 
  For this choice of $q_{\lambda}$ we have for each $\delta \in (0,1),$
  \begin{equation*}
    \begin{split}
      &\dashint_{\B_R} \widetilde\varphi\left( \frac{\lvert\nabla^{k-1}(w-h)\rvert}{R} \right) \,\d x \\
      &= \dashint_{\B_R} (\widetilde F''(0)\nabla^k w - \widetilde F'(\nabla^k w)) : \nabla^k q_{\lambda} \,\d x 
      +\dashint_{\B_R} \widetilde F''(0)\nabla^k w  : (\nabla^k q - \nabla^k q_{\lambda}) \,\d x \\
      &\leq C\lambda \,\dashint_{\B_R} \omega_M(\lvert\nabla^kw\rvert) \left( \lvert\nabla^kw\rvert+\widetilde\varphi'(\lvert\nabla^kw\rvert) \right) \,\d x 
      + C\,\dashint_{\B_R} \lvert\nabla^kw\rvert \lvert \nabla^kq - \nabla^kq_{\lambda}\rvert \,\d x\\
      &\leq C \,\dashint_{\B_R} 2 \delta\,\widetilde\varphi(\lvert\nabla^k w\rvert) + \delta\,\widetilde\varphi^*(\widetilde\varphi'(\lvert\nabla^k w\rvert)) \,\d x \\
      &\quad+C \,\dashint_{\B_R} \delta\,\widetilde\varphi\left( \frac{\lambda}{\delta}\, \omega_M(\lvert\nabla^k w\rvert) \right) + \delta\,\widetilde\varphi^*\left( \frac{\lambda}{\delta}\,\omega_M(\lvert\nabla^k w\rvert)\right)\,\d x 
      + C \,\dashint_{\B_R} \delta\,\widetilde\varphi^*\left( \frac{\lvert\nabla^kq-\nabla^kq_{\lambda}\rvert}{\delta} \right) \,\d x \\
      &= C \left( \mathrm I + \mathrm{I\!I} + \mathrm{I\!I\!I} \right),
    \end{split}
  \end{equation*} 
  where we have used the perturbation estimate (\ref{eq:F_pertubationestimate}) and Young's inequality (\ref{eq:delta_young}). It remains to estimate each of these terms separately; for the first inequality we use the fact that $\widetilde\varphi^*(\widetilde\varphi'(t)) \leq C\varphi(t)$ to estimate
  \begin{equation}
    \mathrm I \leq C \delta \dashint_{\B_R} \widetilde\varphi(\lvert\nabla^kw\rvert) \,\d x,
  \end{equation} 
  and for the second term we use the fact that $\omega_M \leq 1$ and apply Jensen's inequality to the concave function $\omega_M \circ \widetilde\varphi^{-1}$ to estimate
  \begin{equation}
    \begin{split}
      \mathrm{I\!I} &\leq \delta \left( \widetilde\varphi\left( \frac{\lambda}{\delta} \right) +\widetilde\varphi^*\left( \frac{\lambda}{\delta} \right) \right) \dashint_{\B_R} \omega_M(\lvert\nabla^kw\rvert)\,\d x\\
           &\leq \delta^{1-\max\{p_{\varphi},p_{\varphi^*}\}} \left( \widetilde\varphi\left(\lambda \right) +\widetilde\varphi^*\left(\lambda \right) \right) \omega_M\circ \widetilde\varphi^{-1}\left( \dashint_{\B_R} \widetilde\varphi(\lvert\nabla^kw\rvert)\,\d x\right).
    \end{split}
  \end{equation} 
  For the third term we apply H\"older's inequality along with the Lipschitz truncation estimates (\ref{eq:qlambda_psibound}), (\ref{eq:qlambda_measure}) to estimate
  \begin{equation}
    \begin{split}
      \mathrm{I\!I\!I} &\leq \delta^{1-p_{\varphi^*}} \left( \frac{\mathcal L^n(\B_R \cap \{ q \neq q_{\lambda}\})}{\mathcal L^n(\B_R)} \right)^{\frac1n} \left( \dashint_{\B_R} \widetilde\varphi^*(\lvert\nabla^k(q -q_{\lambda})\rvert)^{\frac{n}{n-1}} \,\d x \right)^{\frac{n-1}n} \\
              &\leq C_{\delta}\left( \frac{\dashint_{\B_R} \widetilde\varphi^*(\lvert\nabla^k q\rvert)^{\frac{n}{n-1}}\,\d x}{\widetilde\varphi^*(\lambda)^{\frac{n}{n-1}}} \right)^{\frac1{n}} \left( \dashint_{\B_R} \widetilde\varphi^*(\lvert\nabla^k q\rvert)^{\frac{n}{n-1}} \right)^{\frac{n-1}n} \\
              &\leq C_{\delta}\left( \frac{\dashint_{\B_R} \widetilde\varphi\left(\frac{\lvert\nabla^{k-1}(w-h)\rvert}{R}\right)\,\d x}{\widetilde\varphi^*(\lambda)} \right)^{\frac1{n-1}} \dashint_{\B_R} \widetilde\varphi\left(\frac{\lvert\nabla^{k-1}(w-h)\rvert}{R}\right)\,\d x.
    \end{split}
  \end{equation} 
  We can now choose $\lambda>0$ sufficiently large so the third term can be absorbed to the left-hand side; for this we can take $\lambda$ to satisfy
  \begin{equation}
    C_{\delta} \left(\frac{\dashint_{\B_R} \widetilde\varphi\left(\frac{\lvert\nabla^{k-1}(w-h)\rvert}{R}\right) \,\d x}{\widetilde\varphi^*(\lambda)}\right)^{\frac1{n-1}}  = \frac12.
  \end{equation} 
  Now using the doubling property for $\widetilde\varphi, \widetilde\varphi^*$ and writing $\vartheta_M(t) = t + \widetilde\varphi \circ (\widetilde\varphi^*)^{-1}(t)$ we get
  \begin{equation}
    \begin{split}
      &\dashint_{\B_R} \widetilde\varphi\left( \frac{\lvert \nabla^{k-1}(w- h)\rvert}{R} \right) \,\d x 
      \leq C \delta \dashint_{\B_R} \widetilde\varphi(\lvert\nabla^kw\rvert) \,\d x \\
      &\quad + C_{\delta} \vartheta_M\left(\dashint_{\B_R} \widetilde\varphi\left( \frac{\lvert\nabla^{k-1}(w-h)\rvert}{R} \right) \,\d x\right) \omega_M\circ\widetilde\varphi^{-1}\left( \dashint_{\B_R} \widetilde\varphi(\lvert \nabla w\rvert)\,\d x \right).
    \end{split}
  \end{equation} 
  Finally note that since $w-h \in \WW^{k,\varphi}_0(\B_R,\bb R^N),$ by the Poincar\'e inequality and (\ref{eq:harmonic_estimate1}) we have
  \begin{equation}
    \begin{split}
      \dashint_{\B_R(x_0)} \widetilde\varphi\left(\frac{\lvert \nabla^{k-1}(w- h)\rvert}{R} \right) \,\d x
      &\leq C\,\dashint_{\B_R(x_0)} \widetilde\varphi\left(\lvert \nabla^k(w-h)\rvert\right)\,\d x \\
      &\leq C\,\dashint_{\B_R(x_0)} \widetilde\varphi\left( \lvert\nabla^kw\rvert  \right)\,\d x,
  \end{split}
  \end{equation} 
  so setting $\gamma_M(t) = \frac{\vartheta_M(t)}t\, \omega_M \circ \widetilde\varphi^{-1}(t)$ the result follows.
\end{proof}

\subsection{Excess decay estimate}

We now combine the above estimates to conclude, which will involve establishing estimates for the \emph{excess energies} defined for $M>0$ as
\begin{equation}
  \mathrm E_M(x,r) = \dashint_{\B_r(x)} \varphi_{1+M}\left(\left\lvert\nabla^k u - (\nabla^k u)_{\B_r(x)}\right\rvert\right) \,\d y. \label{eq:excess_energy_M}
\end{equation} 

\begin{lem}[Excess decay estimate]\label{lem:excess_decay}
  Let $u \in \WW^{k,\varphi}(\Omega,\bb R^N)$ be a minimiser of (\ref{eq:functional_autonomous}) where the integrand $F$ satisfies Hypotheses \ref{hyp:generalgrowth_F}, and let $M>0,$ $\delta>0.$ Then for all $\sigma \in \left(0,\frac14\right)$ and $\delta \in (0,1),$ letting $\gamma_M$ as in Lemma \ref{lem:harmonic_approximation}, if $\B_R(x_0)\subset \Omega$ such that $\lvert(\nabla^ku)_{\B_R(x_0})\rvert \leq M$ and $\mathrm E_M(x_0,R)\leq 1$ we have
  \begin{equation}
    \mathrm E_M(x_0,\sigma R) \leq C \left( \sigma + C_{\sigma}\delta + C_{\sigma,\delta}\gamma_M(\mathrm E_M(x_0,R)) \right) \mathrm E_M(x_0,R),
  \end{equation} 
  where the constants depend also on $n, N, k,K,\nu \Delta_2(\varphi),\nabla_2(\varphi), M$ and $G(M).$
\end{lem}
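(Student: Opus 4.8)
The plan is to combine the Caccioppoli inequality (Lemma \ref{lem:caccioppoli}), the harmonic approximation lemma (Lemma \ref{lem:harmonic_approximation}), and the standard linear decay for solutions of the constant-coefficient Legendre--Hadamard system, using the affine/polynomial comparison afforded by Remark \ref{rem:higher_poincaresobolev}. Write $z_0 = (\nabla^k u)_{B_R(x_0)}$, so $|z_0| \le M$, and let $a$ be the $k$\th order Taylor-type polynomial with $\nabla^k a = z_0$ and vanishing averages up to order $k$ (as in Remark \ref{rem:higher_poincaresobolev}); abbreviate $\widetilde\varphi = \varphi_{1+M}$, which is comparable to $\varphi_{1+|z_0|}$. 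Since $E_M(x_0,R) \le 1$ and $u$ is a minimiser (hence $F$-extremal), Lemma \ref{lem:harmonic_approximation} produces $h$ solving the linearised Dirichlet problem with $\partial_\nu^j h = \partial_\nu^j(u-a)$ on $\partial B_R$, satisfying the energy bound \eqref{eq:harmonic_estimate1} and the approximation estimate \eqref{eq:harmonic_approximation}.

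The key steps, in order, are as follows. First, since $h-a$ solves the constant-coefficient system $(-1)^k\nabla^k : F''(z_0)\nabla^k(h-a) = 0$ and $F''(z_0)$ is Legendre--Hadamard elliptic by \eqref{eq:F_elliptic} with ellipticity constants depending only on $\nu, N, \Delta_2(\varphi)$, classical interior estimates give $\sup_{B_{R/2}(x_0)} |\nabla^{k+1}(h-a)| \le \frac{C}{R}\dashint_{B_R(x_0)} |\nabla^k(h-a) - (\nabla^k(h-a))_{B_R(x_0)}|\,\d x$ and in particular, for $\sigma \in (0,\tfrac14)$, the Campanato-type decay
\begin{equation}
  \dashint_{B_{\sigma R}(x_0)} \bigl|\nabla^k(h-a) - (\nabla^k(h-a))_{B_{\sigma R}(x_0)}\bigr|\,\d x \le C\sigma\,\dashint_{B_R(x_0)} \bigl|\nabla^k(h-a) - (\nabla^k(h-a))_{B_R(x_0)}\bigr|\,\d x.
\end{equation}
Converting this $L^1$ decay into a $\widetilde\varphi$-modular decay requires care: one uses $\widetilde\varphi \in \Delta_2 \cap \nabla_2$ together with the $L^\infty$ bound on $\nabla^k(h-a)$ on $B_{R/2}$ (which is itself controlled, via the interior estimate and \eqref{eq:harmonic_estimate1}, by a power of $\dashint_{B_R}\widetilde\varphi(|\nabla^k(u-a)|)$) to pass between $\widetilde\varphi$ and the linear scale on the relevant range, yielding
\begin{equation}
  \dashint_{B_{\sigma R}(x_0)} \widetilde\varphi\bigl(|\nabla^k(h-a) - (\nabla^k(h-a))_{B_{\sigma R}(x_0)}|\bigr)\,\d x \le C\sigma\,\dashint_{B_R(x_0)} \widetilde\varphi\bigl(|\nabla^k(u-a)|\bigr)\,\d x.
\end{equation}
Second, split $\nabla^k u - (\nabla^k u)_{B_{\sigma R}} = \bigl(\nabla^k(h-a) - (\nabla^k(h-a))_{B_{\sigma R}}\bigr) + \bigl(\nabla^k(u-a-h) - (\nabla^k(u-a-h))_{B_{\sigma R}}\bigr)$; apply $\widetilde\varphi$, use convexity/$\Delta_2$, and estimate the first group by the displayed decay and the second by $\dashint_{B_{\sigma R}}\widetilde\varphi(|\nabla^k(u-a-h)|) \le C\sigma^{-n}\dashint_{B_R}\widetilde\varphi(|\nabla^k(u-a-h)|)$, which in turn is handled by the Caccioppoli inequality (Lemma \ref{lem:caccioppoli}) applied to the minimiser $u$ and the polynomial $a+h$ on $B_R$ — this reduces $\dashint_{B_{R/2}}\widetilde\varphi(|\nabla^k(u-a-h)|)$ to $\dashint_{B_R}\widetilde\varphi(R^{-k}|u-a-h|)$ and then, by the Poincaré-type estimate \eqref{eq:higher_poincaresobolev} for $u - a - h \in W^{k,\varphi}_0(B_R)$, to $\dashint_{B_R}\widetilde\varphi(R^{-1}|\nabla^{k-1}(u-a-h)|)$. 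Third, feed in \eqref{eq:harmonic_approximation} to bound the last quantity by $\delta\dashint_{B_R}\widetilde\varphi(|\nabla^k(u-a)|) + C\gamma_M(\dashint_{B_R}\widetilde\varphi(|\nabla^k(u-a)|))\dashint_{B_R}\widetilde\varphi(|\nabla^k(u-a)|)$. Finally, recall that $\nabla^k(u-a) = \nabla^k u - (\nabla^k u)_{B_R}$, so $\dashint_{B_R}\widetilde\varphi(|\nabla^k(u-a)|) = E_M(x_0,R)$; collecting the three contributions (with $\sigma$-dependent constants absorbed into $C_\sigma$ and $C_{\sigma,\delta}$ as in the statement) yields
\begin{equation}
  E_M(x_0,\sigma R) \le C\bigl(\sigma + C_\sigma \delta + C_{\sigma,\delta}\gamma_M(E_M(x_0,R))\bigr) E_M(x_0,R),
\end{equation}
which is the claim. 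One must also use here the comparison $\varphi_{1+|z_0|} \sim \varphi_{1+M}$ (constant depending on $M, \Delta_2(\varphi)$) recorded in Section \ref{sec:Nfunctions} to freely replace the shifted $N$-functions appearing in Lemmas \ref{lem:caccioppoli} and \ref{lem:harmonic_approximation}.

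The main obstacle I anticipate is the passage in Step one from the \emph{linear} Campanato decay for $h-a$ — which is all the constant-coefficient theory directly gives, since we have no quantitative control on $F''$ beyond ellipticity and its modulus of continuity — to a decay statement in the \emph{$\widetilde\varphi$-modular} scale, as needed to close the iteration in $E_M$. The resolution is that on $B_{R/2}$ the function $\nabla^k(h-a)$ is bounded in $L^\infty$ by $C$ times $\dashint_{B_R}\widetilde\varphi(|\nabla^k(u-a)|)$ raised to a suitable power (using the interior gradient estimate together with \eqref{eq:harmonic_estimate1} and Jensen, and the normalisation $E_M(x_0,R)\le 1$ to keep this bounded), so on the relevant amplitude range $\widetilde\varphi$ is comparable, up to $\Delta_2$/$\nabla_2$ constants, to a fixed power function, and the linear decay transfers with only the loss of a multiplicative constant — crucially \emph{not} degrading the gain factor $\sigma$. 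A secondary technical point is bookkeeping the dependence of all constants: every constant produced is allowed to depend on $n,N,k,K,\nu,\Delta_2(\varphi),\nabla_2(\varphi),M,G(M)$, and additionally on $\sigma$ (via the $\sigma^{-n}$ from restricting the Caccioppoli estimate to the smaller ball) and on $\delta$ (via Lemma \ref{lem:harmonic_approximation}), exactly matching the constants $C, C_\sigma, C_{\sigma,\delta}$ in the statement.
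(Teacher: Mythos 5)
There is a genuine gap in your Step two: you invoke the Caccioppoli inequality (Lemma \ref{lem:caccioppoli}) ``applied to the minimiser $u$ and the polynomial $a+h$'' --- but $a+h$ is not a polynomial, since $h$ solves the linearised Dirichlet problem. Lemma \ref{lem:caccioppoli} is proved by testing the minimality of $u$ against $\eta$-cutoffs of $u-a$ and using the strict quasiconvexity condition \ref{eq:F_quasiconvex} at the \emph{constant} tensor $z_0=\nabla^k a$ via the shifted integrand $F_{\nabla^k a}$; this mechanism is unavailable when $\nabla^k(a+h)$ is non-constant, and no Caccioppoli inequality for general $W^{k,\varphi}$ comparison maps is at your disposal. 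This is not a cosmetic issue: the whole point of the step is to descend from the $\nabla^k$-level of $u-a-h$ (where the harmonic approximation gives no smallness, only the bound \eqref{eq:harmonic_estimate1} of order $E_M(x_0,R)$) to the $\nabla^{k-1}$-level where \eqref{eq:harmonic_approximation} provides the factor $\delta+C_\delta\gamma_M(E_M)$, and that descent is exactly what Caccioppoli buys. The paper avoids the problem by splitting at the \emph{function} level rather than the gradient level: it sets $a_2$ equal to the degree-$k$ Taylor polynomial of $h$ at $x_0$, applies Lemma \ref{lem:caccioppoli} on $B_{2\sigma R}(x_0)$ with the genuine polynomial $a_1+a_2$, and then decomposes $u-a_1-a_2=(u-a_1-h)+(h-a_2)$ on the right-hand side; the first piece is small at the function level by \eqref{eq:harmonic_approximation} plus Poincar\'e (at the cost of $C_\sigma$ from changing balls), and the second is the Taylor remainder of $h$, controlled in sup-norm by $\sigma$ times the interior derivative bound. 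You should restructure your argument along these lines.

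A secondary, repairable weakness is your conversion of the linear Campanato decay for $\nabla^k(h-a)$ into a $\widetilde\varphi$-modular decay by claiming $\widetilde\varphi$ is ``comparable to a fixed power function'' on the relevant amplitude range; that comparison does not hold with uniform constants. The clean route (and the one the paper takes, at the function level) is a pointwise bound: the interior estimate gives $\sup_{B_{2\sigma R}}|\nabla^k(h-a)-(\nabla^k(h-a))_{B_{2\sigma R}}|\leq C\sigma\,\dashint_{B_R}|\nabla^k(h-a)|\,\d x\leq C\sigma\,\widetilde\varphi^{-1}\bigl(\dashint_{B_R}\widetilde\varphi(|\nabla^k(h-a)|)\,\d x\bigr)$ by Jensen, and then $\widetilde\varphi(C\sigma t)\leq C\sigma\,\widetilde\varphi(t)$ by convexity once $C\sigma\leq 1$, which transfers the decay without degrading the factor $\sigma$.
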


\begin{proof}
  We let $a_1 : \bb R^n \to \bb R^N$ be a $k$\th order polynomial satisfying
  \begin{equation}
    \dashint_{\B_R(x_0)} \D^{\alpha}(u-a_1)\,\d x = 0
  \end{equation} 
  for all $\lvert\alpha\rvert\leq k$ and apply Lemma \ref{lem:harmonic_approximation} on $\B_R(x_0)$ with $a_1,$ obtaining the unique solution $h$ to
  \begin{equation}
    \pdeproblem{(-1)^k \nabla^k : F''(\nabla^k a_1)\nabla^kh}{0}{\B_R}{\partial_{\nu}^jh}{\partial_{\nu}^j(u-a_1)}{\partial \B_R,\ \  0 \leq j \leq k-1.}
  \end{equation} 
  Combining the Poincar\'e inequality (Remark \ref{rem:higher_poincaresobolev}) with the approximation estimate (\ref{eq:harmonic_approximation}) from Lemma \ref{lem:harmonic_approximation} we have
  \begin{equation}\label{eq:recast_harmonicexcess}
    \begin{split}
      &\int_{\B_R(x_0)} \varphi_{1+M}\left( \frac{\lvert u - a_1-h\rvert}{R^k} \right) \,\d x \\
      &\quad \leq\int_{\B_R(x_0)} \varphi_{1+M}\left( \frac{\lvert \nabla^{k-1}(u - a_1-h)\rvert}{R} \right) \,\d x \\
      &\quad\leq \left(\delta + C_{\delta}\gamma_M\left(\mathrm E_M(x_0,R) \right)\right)\mathrm E_M(x_0,R).
    \end{split}
  \end{equation}
  Now let $a_2(x) = \sum_{\lvert\alpha\rvert\leq k} \frac{\D^{\alpha}h(x_0)}{\alpha!} (x-x_0)^{\alpha},$ and put $a=a_1+a_2.$ This satisfies $\varphi_{1+M}(\lvert\nabla^ka_2\rvert) \leq C(M+1),$ so $\lvert\nabla^ka\rvert \leq C(M)$ and hence $\varphi_{1+\lvert\nabla^ka\rvert} \sim \varphi_{1+M}.$ Therefore applying the Caccioppoli inequality (Lemma \ref{lem:caccioppoli}) with $a$ on $\B_{2\sigma R}(x_0)$ we obtain
  \begin{equation}
    \begin{split}
      \mathrm E_M(x_0,\sigma R) &\leq C\,\dashint_{\B_{2\sigma R}(x_0)} \varphi_{1+M}\left( \frac{\lvert u - a\rvert}{(2\sigma R)^k} \right) \,\d x\\
                      &\leq C_{\sigma} \dashint_{\B_R(x_0)} \varphi_{1+M}\left( \frac{\lvert u-a_1-h\rvert}{R^k} \right) \,\d x 
                      + C\,\dashint_{\B_{2\sigma R}(x_0)} \varphi_{1+M}\left( \frac{\lvert h-a_2\rvert}{(2\sigma R)^k} \right) \,\d x.
    \end{split}
  \end{equation} 
  Now as $h$ is $F''(\nabla^ka)$-harmonic, by interior regularity estimates (see for instance \cite[Section 5.11]{book:Taylor1_11}) and \eqref{eq:harmonic_estimate1} we have
  \begin{equation}
    \sup_{\B_{2\sigma R}} \frac{\lvert h(x) - a_2(x)\rvert}{(2\sigma R)^k} \leq \frac{2\sigma R}{(k+1)!} \sup_{\B_{R/2}(x_0)} \lvert\nabla^{k+1}h\rvert \leq C\varphi_{1+M}^{-1}\left(\sigma \mathrm E_M(x,R)\right),
  \end{equation} 
  so together with (\ref{eq:recast_harmonicexcess}) we obtain
  \begin{equation}
    \begin{split}
      \mathrm E_M(x_0,\sigma R) \leq C_{\sigma} \left( \delta + C_{\delta} \gamma_M(\mathrm E_M(x_0,R)) \right) \mathrm E_M(x_0,R) + C \sigma \mathrm E_M(x_0,R),
    \end{split}
  \end{equation} 
  which is the desired estimate.
\end{proof}

We can now conclude in the usual way.

\begin{proof}[Proof of Theorem {\ref{thm:main_epsreg}}]
  Suppose $\B_R(x_0) \subset \Omega$ such that $\lvert (\nabla^ku)_{\B_R(x_0)}\rvert\leq M,$ and let $x \in \B_{R/2}(x_0).$ Then since $\lvert(\nabla^ku)_{\B_{R/2}(x)}\rvert\leq 2^nM$ and $\mathrm E_M(x,R/2) \leq 2^n\eps \leq 1$ shrinking $\eps$ if necessary, we can apply Lemma \ref{lem:excess_decay} above with $M$ replaced by $2^nM$ obtain
  \begin{equation}
    \mathrm E_M(x,\sigma R/2) \leq C\left(\sigma + C_{\sigma}\delta + C_{\sigma,\delta} \gamma_M(\eps) \right)\mathrm E_M(x,R/2),
  \end{equation} 
  for $\sigma \in \left( 0,\frac14 \right)$ and $\delta>0.$ We now choose $\sigma>0$ sufficiently small to ensure $C\sigma < \frac13\sigma^{2\alpha}$ and $\delta>0$ small enough so $CC_{\sigma}\delta < \frac13\sigma^{2\alpha},$ and finally $\eps>0$ so that $CC_{\sigma,\delta}\gamma_M(\eps) < \frac13\sigma^{2\alpha},$ which gives
  \begin{equation}
    \mathrm E_M(x,\sigma R/2) \leq \sigma^{2\alpha}\mathrm E_M(x,R/2).
  \end{equation} 
  We claim, by iterating the above, that we have
  \begin{align}
    \lvert (\nabla^k u)_{\B_{\sigma^{j-1}R/2}(x)}\rvert &\leq 2^{n+1}M, \label{eq:iterated_averages}\\
    \mathrm E_M(x,\sigma^jR/2) &\leq \sigma^{2j\alpha} \mathrm E_M(x,R/2) \label{eq:iterated_decay}
  \end{align}
  for all $j \geq 1,$ provided $\eps>0$ is sufficiently small. 
  Indeed proceeding inductively, if this holds for for all  $1 \leq i \leq j,$ then note first that for such $i$ we use Jensen's inequality to bound
  \begin{equation}\label{eq:average_differences}
    \begin{split}
      &\lvert (\nabla^k u)_{\B_{\sigma^iR/2}(x)} - (\nabla^ku)_{\B_{\sigma^{i-1}R/2}(x)}\rvert \\
      &\quad\leq \sigma^{-n} \dashint_{\B_{\sigma^{i-1}R/2}(x)} \lvert\nabla^ku - (\nabla^ku)_{\B_{\sigma^{i-1}R/2}(x)}\rvert \,\d x \\
      &\quad\leq \sigma^{-n} \varphi_{1+M}^{-1}\left(  \mathrm E_M(x,\sigma^{i-1}R/2) \right) 
      \leq \sigma^{-n}\varphi_{1+M}^{-1}\left(\sigma^{2(i-1)\alpha}\eps\right).
    \end{split}
  \end{equation} 
  Hence by shrinking $\eps>0$ further if necessary to ensure that $\varphi_{1+M}^{-1}(\delta\eps) \leq C\sqrt{\delta}$ for all $\delta \in (0,1),$ we can estimate
  \begin{equation}
    \begin{split}
      \lvert (\nabla^k u)_{\B_{\sigma^{j}R/2}(x)}\rvert 
      &\leq 2^nM + \sum_{i=1}^{j} \lvert (\nabla^k u)_{\B_{\sigma^iR/2}(x)} - (\nabla^ku)_{\B_{\sigma^{i-1}}(x)}\rvert \\
      &\leq 2^nM + \sum_{i=0}^{j-1} \varphi_{1+M}^{-1}(\sigma^{2i\alpha}\eps)
      \leq 2^nM + \eps\sum_{i=0}^{\infty} \sigma^{i\alpha},
    \end{split}
  \end{equation} 
  which can be chosen to be less than $2^{n+1}M$ if $\eps>0$ is sufficiently small, establishing (\ref{eq:iterated_averages}). Therefore we can iteratively apply the excess decay estimate with the same parameters to deduce (\ref{eq:iterated_decay}). Hence for any $r \in (0,R/2)$ we have $E(x,r) \leq \varphi_{1+M}(2^{n+1}\eps) \leq 1$ and so
  \begin{equation}
    \dashint_{\B_r(x)} \lvert \nabla^k u - (\nabla^k u)_{\B_r(x)}\rvert \,\d y \leq C\sqrt{E(x,r)} \leq Cr^{\alpha}.
  \end{equation} 
  Since this holds for all $x \in \B_{R/2}(x)$ and $0 < r < R/2,$ by the Campanato-Meyers characterisation of H\"older continuity (see for instance \cite[Theorem 2.9]{book:Giusti03}) it follows that $\nabla^k u$ is $\CC^{0,\alpha}$ in $\B_{R/2}(x_0).$
\end{proof}

\section{Extension to strong local minimisers}\label{sec:local_min}

We will conclude our discussion with a straightforward extension of our main theorem to the setting of $\WW^{k,\psi}$-local minimisers. To be more precise, we consider the following.

\begin{defn}
  Let $F$ satisfy Hypotheses \ref{hyp:generalgrowth_F}, and $\psi$ be an $N$-function. Then we say $u \in \WW^{k,\varphi}(\Omega,\bb R^N)$ is a \emph{strong $\WW^{k,\psi}$-local minimiser} if there exists $\delta>0$ such that if $\xi \in \WW^{k,\psi}_0(\Omega,\bb R^N)$  with $\int_{\Omega} \psi(\lvert\nabla^k \xi\rvert) \,\d x < \delta,$ then we have 
  \begin{equation}\label{eq:minimising_property}
    \mathcal F(u) \leq \mathcal F(u+\xi).
  \end{equation} 
  We also say $u$ is a \emph{$\WW^{k,\infty}$-local minimiser} if (\ref{eq:minimising_property}) is satisfied whenever $\xi \in \WW^{k,\infty}_0(\Omega,\bb R^N)$ such that $\norm{\nabla^k\xi}_{\LL^{\infty}(\Omega,\bb M_k)} < \delta.$
\end{defn}

The regularity of strong $\WW^{1,q}$-minimisers was first considered by \textsc{Kristensen \& Taheri} \cite{article:KristensenTaheri03}, who established a partial regularity theorem in the case of superquadratic ($p\geq 2$) growth. These results have been extended for instance in \cite{article:CarozzaNapoli03,article:SchemmSchmidt09,article:CamposCordero17,article:CamposCordero21}.

\begin{thm}[$\eps$-regularity theorem for local minimisers]\label{thm:localmin_regularity}
  Let $F$ satisfy Hypotheses \ref{hyp:generalgrowth_F}, $\Omega \subset \mathbb R^n$ be a bounded domain and let $u \in \WW^{k,\varphi}(\Omega,\bb R^N)$ satisfy one of the following minimality properties:
  \begin{enumerate}[label=(\roman*)]
    \item\label{eq:local_case1} There is an $N$-function $\psi$ such that $u$ is a strong $\WW^{k,\psi}$-minimiser, and we have
      \begin{equation}\label{eq:psi_finiteness}
    \int_{\Omega'} \psi(\lambda \lvert\nabla^ku\rvert) \,\d x < \infty
      \end{equation} 
      for all $\lambda>0$ and $\Omega' \Subset\Omega$.

    \item\label{eq:local_case2} We have $u$ is a $\WW^{k,\infty}$-minimiser and moreover $u \in \WW^{k,\infty}_{\mathrm{loc}}(\Omega,\mathbb R^N)$ such that
  \begin{equation}\label{eq:linfty_smallness}
    \limsup_{r \to 0} \left(\esssup_{y \in \B_r(x)} \, \lvert \nabla^k u -(\nabla^k u)_{\B_r(x)}\rvert \right) < \delta
  \end{equation} 
  locally uniformly in $x \in \Omega$.
  \end{enumerate}
  Then for each $x_0 \in \Omega,$ $M>0$ and $\alpha \in (0,1),$ there exists $\eps>0$ and $R_0>0$ such that for any $R<R_0$ for which $\lvert(\nabla^ku)_{\B_R(x_0)}\rvert\leq M$ and
  \begin{equation}
    \mathrm E_M(x_0,R) = \dashint_{\B_R(x_0)} \varphi_{1+M}(\lvert\nabla^ku-(\nabla^ku)_{\B_R(x_0)}\rvert)\,\d x \leq \eps,
  \end{equation} 
  we have $u$ is of class $\CC^{k,\alpha}$ in $\B_{R/2}(x_0).$
\end{thm}

\begin{rem}
  If $\psi$ satisfies the $\Delta_2$ condition then (\ref{eq:psi_finiteness}) is equivalent to requiring that $u \in \WW^{k,\psi}_{\loc}(\Omega,\bb R^N),$ however extra care is needed in the absence of the $\Delta_2$-condition due the failure of suitable density results in $\LL^{\psi}.$ To the best of our knowledge is not known if this additional condition (\ref{eq:psi_finiteness}) is necessary, even in the polynomial case when $\psi = t^q.$

  In the $\WW^{k,\infty}$ case however, the construction in \cite[Section 7]{article:KristensenTaheri03} implies that it is insufficient to assume that $u$ is a $\WW^{k,\infty}$-local minimiser that lies in $\WW^{k,\infty}_{\loc}(\Omega,\bb R^N).$
\end{rem}

The key difference from the minimising setting is the lack of a full Caccioppoli inequality; to apply the minimising condition we need to work on balls of sufficiently small radii, which prevents us from applying the iteration argument used in the proof of Lemma \ref{lem:caccioppoli}. However one can still establish a weaker form which will suffice to complete the argument.

\begin{lem}[Caccioppoli-type inequality]\label{lem:localmin_caccioppoli}
  Assume the setup of Theorem \ref{thm:localmin_regularity}, and let $\kappa \in (0,1).$ Then there is $R_0 > 0$ and $\eps>0$ such that if $\B_r(x) \subset \B_{R_0}(x_0) \Subset \Omega$ for which $\lvert(\nabla^ku)_{\B_r(x)}\rvert \leq M$ and $\mathrm E_M(x,r) \leq \eps,$ then define $a_{x,r}$ such that $\nabla^{k-1}a_{x,r}$ satisfies Lemma \ref{lem:affine_approx} in $\B_r(x)$ with $\nabla^{k-1}u$ and that
  \begin{equation}\label{eq:particular_a}
    \dashint_{\B_r} \D^{\alpha}(u- a_{x,r}) \,\d y = 0
  \end{equation}  
  for all $\lvert\alpha \rvert \leq k-2.$ Then we have
  \begin{equation}
    \begin{split}
      &\dashint_{\B_{r/2}(x)} \varphi_{1+M}(\lvert \nabla^k u - \nabla^k a_{x,r}\rvert) \,\d y \\
      &\quad\leq \kappa\, \dashint_{\B_{r}(x)} \varphi_{1+M}(\lvert \nabla^k u - \nabla^k a_{x,r}\rvert) \,\d y 
      +C\,\dashint_{\B_{r}(x)} \varphi_{1+M}\left( \frac{\lvert u-a_{x,r}\rvert}{r^k} \right)  \,\d y.
    \end{split}
  \end{equation} 
\end{lem}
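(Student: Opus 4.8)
The plan is to rerun the proof of Lemma~\ref{lem:caccioppoli} with two modifications dictated by the weaker minimality: we work on a single fixed scale $r$ rather than iterating over $[R/2,R]$, and we may only test against variations whose $k$\th gradient is admissibly small. Because the admissible variations disappear as the cut-off annuli shrink, we can only afford \emph{finitely many} hole-filling steps, which is exactly why the estimate carries a factor $\kappa$ instead of reducing to the full Caccioppoli inequality.

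First I would record the consequences of the choice of $a_{x,r}$. By Lemma~\ref{lem:affine_approx} (applied to $\varphi_{1+M}$) and $E_M(x,r)\leq\eps$ one has $\varphi_{1+M}(\lvert\nabla^k a_{x,r}-(\nabla^k u)_{B_r(x)}\rvert)\leq C E_M(x,r)\leq C\eps$, so for $\eps$ small $\lvert\nabla^k a_{x,r}\rvert\leq M+1$ and $\widetilde\varphi:=\varphi_{1+\lvert\nabla^k a_{x,r}\rvert}\sim\varphi_{1+M}$; moreover (\ref{eq:particular_a}) together with the defining property of the affine approximant of $\nabla^{k-1}u$ gives $\dashint_{B_r(x)}D^\alpha(u-a_{x,r})\,\d y=0$ for \emph{all} $\lvert\alpha\rvert\leq k-1$, which is what makes every rescaled lower-order derivative of $w:=u-a_{x,r}$ controllable. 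Now fix equally spaced radii $r/2=\rho_0<\rho_1<\dots<\rho_N=r$ and, for each $i$, a cut-off $\eta_i$ with $1_{B_{\rho_{i-1}}}\leq\eta_i\leq1_{B_{\rho_i}}$, $\lvert\nabla^j\eta_i\rvert\leq C_N r^{-j}$. On $B_{\rho_{i-1}}\subset B_{\rho_i}$ I would run the one-step estimate of Lemma~\ref{lem:caccioppoli} verbatim: strict quasiconvexity (\ref{eq:F_quasiconvex}) applied to $\eta_i w$, minimality of $u$ against the competitor $u-\eta_i w=\eta_i a_{x,r}+(1-\eta_i)u$, the growth bound (\ref{eq:shifted_growth1}) and the $\Delta_2$-condition, then filling the hole and absorbing the intermediate derivatives with the interpolation Lemma~\ref{lem:derivative_interpolation}. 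This yields $\int_{B_{\rho_{i-1}}}\widetilde\varphi(\lvert\nabla^k w\rvert)\,\d y\leq\theta'\int_{B_{\rho_i}}\widetilde\varphi(\lvert\nabla^k w\rvert)\,\d y+C_{i,N}\int_{B_r}\widetilde\varphi(\lvert w\rvert/r^k)\,\d y$ with a fixed $\theta'=\theta'(\text{data})\in(0,1)$; iterating over $i=1,\dots,N$ and choosing $N=N(\kappa,\theta')$ with $(\theta')^N<\kappa$ gives the asserted inequality (replacing $\kappa$ by a slightly larger value in $(0,1)$ to absorb the $\widetilde\varphi\sim\varphi_{1+M}$ constant, which is harmless since $\kappa$ was arbitrary).

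The crux — and the step I expect to be the main obstacle — is showing that each competitor is admissible, i.e. $-\eta_i w\in W^{k,\psi}_0(\Omega,\bb R^N)$ with $\int_\Omega\psi(\lvert\nabla^k(\eta_i w)\rvert)\,\d y<\delta$ for $r<R_0$ small, \emph{uniformly} in $x\in B_{R_0}(x_0)$, with no $\Delta_2$ or $\nabla_2$ hypothesis available on $\psi$. I would argue in the Luxemburg norm. Poincar\'e's inequality $\norm{v-(v)_{B_r}}_{L^\psi(B_r)}\leq Cr\norm{\nabla v}_{L^\psi(B_r)}$ holds for every $N$-function by the same Jensen-type argument as the $p=1$ case of Lemma~\ref{lem:poincare_sobolev}, so the vanishing averages of $\nabla^m w$ for $m\leq k-1$ give $\norm{\nabla^m w}_{L^\psi(B_r(x))}\leq C r^{k-m}\norm{\nabla^k w}_{L^\psi(B_r(x))}$, whence by Leibniz $\norm{\nabla^k(\eta_i w)}_{L^\psi(B_r(x))}\leq C_N\sum_{m=0}^{k}r^{m-k}\norm{\nabla^m w}_{L^\psi(B_r(x))}\leq C_N'\norm{\nabla^k w}_{L^\psi(B_r(x))}$. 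Finally $\norm{\nabla^k w}_{L^\psi(B_r(x))}\leq\norm{\nabla^k u}_{L^\psi(B_r(x))}+(M+1)\norm{1}_{L^\psi(B_r(x))}$, and both terms go to $0$ as $r\to0$, uniformly in $x$ once $\overline{B_{R_0}(x_0)}\subset\Omega'\Subset\Omega$: the second because $\norm{1}_{L^\psi(B_r(x))}=1/\psi^{-1}(\lvert B_r\rvert^{-1})\to0$, and the first because, taking $\lambda=1/\eps'$ in (\ref{eq:psi_finiteness}), absolute continuity of the integral gives $\int_{B_r(x)}\psi(\eps'^{-1}\lvert\nabla^k u\rvert)\,\d y\to0$, hence $\norm{\nabla^k u}_{L^\psi(B_r(x))}\leq\eps'$. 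Since $\int_{B_r}\psi(\lvert\nabla^k(\eta_i w)\rvert)\,\d y\leq\norm{\nabla^k(\eta_i w)}_{L^\psi(B_r(x))}$ whenever the latter is $\leq1$, admissibility follows; with this in hand the rest is the bookkeeping already carried out in Lemma~\ref{lem:caccioppoli}. For a $W^{k,\infty}$-local minimiser the same scheme applies with $L^\psi$ replaced by $L^\infty$ and the $L^1$-Poincar\'e replaced by its $L^\infty$ analogue, the smallness of $\norm{\nabla^k(\eta_i w)}_{L^\infty(B_r(x))}$ now being supplied by (\ref{eq:linfty_smallness}) together with the excess bound, and the competitors automatically lying in $W^{k,\infty}_0(\Omega,\bb R^N)$.
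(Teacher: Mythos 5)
Your overall strategy coincides with the paper's: the paper also performs only finitely many hole-filling steps, taking radii $s_j,t_j$ between $r/2$ and $r$ with gaps $\zeta r$, $\zeta=\frac{1}{2\ell}$, where $\ell$ is chosen so that $\theta^\ell\leq\kappa$, and it also identifies admissibility of the competitor $\eta(u-a_{x,r})$ as the essential new point. Your treatment of the $W^{k,\psi}$ case is correct and differs only cosmetically from the paper's: the paper stays at the level of modulars, chaining the Bhattacharya--Leonetti Poincar\'e inequality (with all constants kept \emph{inside} $\psi$, since no $\Delta_2$ is available) to arrive at $\int_{B_{R_0}(x_0)}\psi\left(C\zeta^{-k}(\lvert\nabla^ku\rvert+M)\right)\d y<\delta$ for $R_0$ small via \eqref{eq:psi_finiteness} and absolute continuity, whereas you pass through the Luxemburg norm and use $\int\psi(\lvert f\rvert)\leq\norm{f}_{L^\psi}$ when $\norm{f}_{L^\psi}\leq1$. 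Both routes work and rest on the same two facts (a Jensen-type Poincar\'e valid for arbitrary $N$-functions, and \eqref{eq:psi_finiteness} with arbitrary $\lambda$).

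The one genuine gap is your one-sentence dismissal of the $W^{k,\infty}$ case. Replacing the Poincar\'e inequality by ``its $L^\infty$ analogue'' gives only
\begin{equation*}
  \norm{r^{k-j}\nabla^j(u-a_{x,r})}_{L^{\infty}(B_r(x),\bb M_j)}\leq C\,\norm{\nabla^k(u-a_{x,r})}_{L^{\infty}(B_r(x),\bb M_k)}
\end{equation*}
with $C=C(n,N,k)\geq1$, and $\norm{\nabla^k(u-a_{x,r})}_{L^\infty}$ is \emph{not} small --- hypothesis \eqref{eq:linfty_smallness} only makes it (barely) less than $\delta$. After summing the Leibniz terms with the cut-off constants $\zeta^{j-k}$ you would obtain $\norm{\nabla^k\xi}_{L^\infty}\leq C_\zeta\delta$, which overshoots the admissibility threshold $\delta$. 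What is actually needed, and what the paper supplies via an Arzel\`a--Ascoli compactness argument, is the interpolation
\begin{equation*}
  \norm{r^{k-j}\nabla^j(u-a_{x,r})}_{L^{\infty}(B_r(x),\bb M_j)}\leq\gamma\,\norm{\nabla^k(u-a_{x,r})}_{L^{\infty}(B_r(x),\bb M_k)}+C_\gamma\,\varphi_{1+M}^{-1}(\eps),
\end{equation*}
valid for every $\gamma>0$: the lower-order terms are then made small by first choosing $\gamma$ small relative to $\zeta$ and then $\eps$ small, while the top-order term $\eta\,\nabla^k(u-a_{x,r})$ alone stays strictly below $\delta$ by \eqref{eq:linfty_smallness} and \eqref{eq:affine_derivative}. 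Your phrase ``together with the excess bound'' points at the right second ingredient, but without an interpolation step combining the $L^\infty$ bound with the integral smallness $E_M(x,r)\leq\eps$ the argument as written does not close.
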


\begin{proof}
  Fix $\zeta \in (0,1)$ to be specified later, and let $0<t<s<r$ such that $r \leq \zeta(s-t).$ Then let $\eta \in \CC^{\infty}_c(\B_R(x_0))$ be a cutoff such that $\mathbbm{1}_{\B_t(x)} \leq \eta \leq \mathbbm{1}_{\B_s(x)}$ and $\lvert \nabla^j\eta\rvert \leq \frac{C}{(s-t)^{j}}$ for all $0 \leq j \leq k.$ We wish to use the minimising condition with $\xi = \eta(u-a_{x,r}),$ so we need to verify this is possible provided $R_0$ is sufficiently small.

  In the case of \ref{eq:local_case1}, we we will use the Poincar\'e inequality proved in \cite[Lemma 1]{article:BhattacharyaLeonetti91} which is valid for general $N$-functions $\psi$. We claim this allows us to estimate
  \begin{equation}
    \begin{split}
      \int_{\B_r(x)} \psi(\lvert \nabla \xi\rvert) \,\d y 
      &\leq \sum_{j=0}^k \int_{\B_r(x)} \psi\left( C\,\frac{\lvert \nabla^j (u-a_{x,r})\rvert}{(s-t)^{k-j}} \right) \,\d y \\
      &\leq C\sum_{j=0}^{k} \int_{\B_r(x)} \psi\left( \frac{Cr^{k-j}}{(s-t)^{k-j}}\lvert \nabla^{k} (u-a_{x,r})\rvert \right) \,\d y \\
      &\leq C \int_{\B_{R_0}(x_0)} \psi\left( C \zeta^{-k} (\lvert\nabla^ku\rvert+M) \right) \,\d y,
    \end{split}
  \end{equation} 
  using the additivity inequality $\psi(s+t) \leq \psi(2s)+\psi(2t)$ in the first line, followed by iteratively applying the Poincar\'e inequality using (\ref{eq:particular_a}) which applies for all $\lvert\alpha\rvert\leq k-1$ by choice of $\nabla^{k-1}a_{x,r}.$
  Then we can choose $R_0>0$ to be sufficiently small so this bound is smaller than $\delta$, thereby ensuring that $\xi$ is a valid test function.
    In the case of \ref{eq:local_case2} we can estimate
  \begin{equation}\label{eq:xi_infty_estimate}
    \begin{split}
      \norm{\nabla^k\xi}_{\LL^{\infty}(\Omega,\bb M_k)} 
      &\leq \norm{\nabla^ku - (\nabla^ku)_{\B_R(x_0)}}_{\LL^{\infty}(\B_r(x),\bb M_k)} +\lvert \nabla^ka_{x,r} - (\nabla^ku)_{\B_R(x_0)}\rvert \\
      &\quad+ C\sum_{j=0}^{k-1} \norm{\frac{\nabla^j(u-a_{x,r})}{(s-t)^{k-j}}}_{\LL^{\infty}(\B_r(x),\bb M_j)} \\
      &\leq \delta_1 + \lvert \nabla^ka_{x,r} - (\nabla^ku)_{\B_R(x_0)}\rvert \\
      &\quad+C \sum_{j=0}^{k-1} \zeta^{j-k} \norm{r^{k-j}(\nabla^j(u-a_{x,r}))}_{\LL^{\infty}(\B_r(x),\bb M_j)},
    \end{split}
  \end{equation} 
  where $R_0$ is chosen to be sufficiently small so that
  \begin{equation}
    \delta_1 :=
    \esssup_{y \in \B_{r}(x)} \, \lvert \nabla^k u -(\nabla^k u)_{\B_r(x)}\rvert < \delta,
  \end{equation} 
  for all $\B_r(x) \subset \B_R(x_0)$, using the additional condition (\ref{eq:linfty_smallness}). Now an analogous argument as in \cite[p.~76]{article:KristensenTaheri03} allows us to prove the pointwise estimate
  \begin{equation}\label{eq:uniform_lowerderivatives}
    \begin{split}
      &\norm{r^{k-j}(\nabla^j(u-a_{x,r}))}_{\LL^{\infty}(\B_r(x),\bb M_j)} \\
      &\quad\leq (2N)^{k-j} \norm{\nabla^k(u - a_{x,r})}_{\LL^{\infty}(\B_r(x),\bb M_k)},
    \end{split}
  \end{equation} 
  and we can also apply (\ref{eq:affine_derivative}) from Lemma \ref{lem:affine_approx} to estimate
  \begin{equation}\label{eq:a_avg_estimate}
    \lvert \nabla^ka_{x,r} - (\nabla^ku)_{\B_R(x_0)}\rvert \leq \dashint_{\B_r(x)} \lvert \nabla^k u - (\nabla^ku)_{\B_R(x)}\rvert \,\d x \leq \varphi_{1+M}^{-1}(\mathrm{E}_M(x,r)).
  \end{equation}
  From this we claim we have the interpolation estimate
  \begin{equation}\label{eq:interpolation_estimate_linfty}
    \begin{split}
      &\norm{r^{k-j}\nabla^j(u-a_{x,r})}_{\LL^{\infty}(\B_r(x),\bb M_j)} \\
      &\quad\leq \gamma \norm{\nabla^k(u - a_{x,r})}_{\LL^{\infty}(\B_r(x),\bb M_k)} + C_{\gamma}\dashint_{\B_r(x)} \lvert \nabla^k u - (\nabla^ku)_{\B_R(x)}\rvert \,\d x \\
      &\quad\leq \gamma \norm{\nabla^k(u - a_{x,r})}_{\LL^{\infty}(\B_r(x),\bb M_k)} + C_{\gamma}\varphi^{-1}_{1+M}(\mathrm{E}_M(x,r))
    \end{split}
  \end{equation} 
    for all $\gamma > 0$. 
    We will show the first inequality for general $u \in \WW^{k,\infty}(\B_r(x),\mathbb R^N)$ by means of a contradiction argument, noting the second line follows from Jensen's inequality.
    Rescaling to the case of the unit ball $\B = \B_1(0)$ we will assume the inequality does not hold, so there exists $(u_m)_m \subset \WW^{k,\infty}(\B,\mathbb R^N)$ and $0 \leq j \leq k-1$ for which for which
    \begin{equation}
      \begin{split}
        1 &= \lVert \nabla^j(u_m - a_{m}) \rVert_{\LL^{\infty}(\B,\mathbb M_j)} \\
          &> \gamma \lVert \nabla^k(u_m - a_{m}) \rVert_{\LL^{\infty}(\B,\mathbb M_k)} 
          + m\dashint_{\B} \lvert \nabla^k u_m - (\nabla^k u_m)_{\B}\rvert \,\d x,      
  \end{split}
  \end{equation} 
  holds for all $m$,
  where $a_{m}$ is defined as $a_{0,1}$ for $u_m$; in particular $(\nabla^i(u_m-a_m))_{\B_r(x)} = 0$ for all $0\leq i \leq k-1$ and \eqref{eq:a_avg_estimate} holds.
  Then by \eqref{eq:uniform_lowerderivatives} and Arzel\`a-Ascoli, passing to a subsequence there exists $v$ such that $\nabla^i(u_m - a_{x,r,m}) \to \nabla^iv$ uniformly on $\B$ for all $0 \leq i \leq k-1$, and since $\dashint_{\B} \lvert \nabla^ku_m-(\nabla^ku_m)_{\B}\rvert\,\d x \to 0$, combining with \eqref{eq:a_avg_estimate} we see that $(\nabla^kv)_{\B} = 0$. 
  Therefore since $\nabla^kv \equiv 0$ in $\B$ and $(\nabla^iv)_{\B} = 0$ for all $0 \leq i \leq N$, it follows that $v \equiv 0$ in $\B$, contradicting the fact that $\lVert \nabla^j(u_m - a_m)\rVert_{\LL^{\infty}(\B,\mathbb M_j)}=1$ for all $m$. Hence \eqref{eq:interpolation_estimate_linfty} holds.

    Therefore combining our estimates in \eqref{eq:xi_infty_estimate} we have
    \begin{equation}
      \lVert \nabla^k \xi \rVert_{\LL^{\infty}(\Omega,\mathbb M_k)} \leq \delta_1 + \gamma \lVert \nabla^k(u-a_{x,r})\rVert_{\LL^{\infty}(\B_R(x),\mathbb M_k)} + C_{\gamma} \varphi_{1+M}^{-1}(\eps),
    \end{equation} 
    which can be chosen to be smaller than $\delta$ by shrinking $\gamma, \eps>0$ as necessary.
    Hence, in both cases, the $\xi$ is a valid test function.

  Now we can argue as in Lemma \ref{lem:caccioppoli} from the minimising case; writing $\widetilde\varphi = \varphi_{1+M},$  noting that $w_{x,r} = u - a_{x,r}$ is $F_{\nabla^ka_{x,r}}$-extremal we can estimate
  \begin{equation}\label{eq:caccioppoli_preiterate}
    \int_{\B_t(x)} \widetilde\varphi\left(\lvert\nabla^kw_{x,r}\rvert\right) \,\d y \leq \theta\int_{\B_s(x)} \widetilde\varphi\left( \lvert\nabla^kw_{x,r}\rvert \right)\,\d y + C \int_{\B_r} \widetilde\varphi\left(\frac{\lvert w_{x,r}\rvert}{(s-t)^k}\right)\,\d y.
  \end{equation} 
  analogously to (\ref{eq:caccioppoli_middle1})--(\ref{eq:caccioppoli_middle3}), where $\theta>0$ is given and independent of $s,t.$ Then there is some $\ell\geq 1$ such that $\theta^\ell \leq \kappa,$ so we will iteratively apply this estimate $\ell$-times. 

  For this take $s_j = (1-\frac{j}{2\ell})r, t_j = (1-\frac{j-1}{2\ell})r$ for $1 \leq j \leq \ell,$ and note that $t_0 = r,$ $s_{\ell} = \frac{r}2,$ and $(t_j-s_j) =\frac{r}{2\ell}.$ Therefore taking $\zeta=\frac1{2\ell},$ we can apply (\ref{eq:caccioppoli_preiterate}) with $s_j,t_j$ and chain the inequalities to get
  \begin{equation}
    \int_{\B_{r/2}(x)} \widetilde\varphi\left(\lvert\nabla^kw_{x,r}\rvert\right) \,\d y \leq \kappa\int_{\B_r(x)} \widetilde\varphi\left( \lvert\nabla^kw_{x,r}\rvert \right) \,\d y + C \int_{\B_r} \widetilde\varphi\left(\frac{\lvert w_{x,r}\rvert}{R^k}\right)\,\d y,
  \end{equation} 
  as required.
\end{proof}

From here the proof of Theorem \ref{thm:localmin_regularity} proceeds analogously as in the minimising case. Note that both strong $\WW^{k,\psi}$ and $\WW^{k,\infty}$-local minimisers are $F$-extremal, so the harmonic approximation result (Lemma \ref{lem:harmonic_approximation}) can be applied to $u.$ We will sketch the remaining argument, pointing out the key modifications.

\begin{proof}[Proof of Theorem {\ref{thm:localmin_regularity}}]
  For $\B_r(x) \subset \B_R(x_0)$ we will consider the slightly modified excess energy
  \begin{equation}
  \widetilde{\mathrm E}_M(x,r) = \dashint_{\B_r(x)} \varphi_{1+M}\left( \lvert \nabla^k(u-a_{x,r})\rvert \right)  \,\d y,
  \end{equation} 
  with $a_{x,r}$ as in Lemma \ref{lem:localmin_caccioppoli}, which satisfies $\mathrm E_M(x,r) \sim \widetilde{\mathrm E}_M(x,r)$ by convexity of $\varphi$ and (\ref{eq:affine_derivative}). Then similarly as in Lemma \ref{lem:excess_decay}, we claim that if $\lvert\nabla^ka_{x,2\sigma r}\rvert,\lvert\nabla^ka_{x,r}\rvert \leq M$ and $\mathrm E_M(x,r)\leq 1,$ then for each $\kappa, \tilde\delta \in (0,1),$ $\sigma \in \left(0,\frac14\right)$ we have the excess decay estimate
  \begin{equation}
    \mathrm E_M(x,\sigma r) \leq C\left( \sigma + C_{\sigma}(\kappa+\tilde\delta) + C_{\sigma,\tilde\delta}\gamma_M(\mathrm E_M(x,r)) \right) \mathrm E_M(x,r),
  \end{equation} 
  where $\gamma_M$ is as in Lemma \ref{lem:harmonic_approximation}. Taking $a_1 = a_{x,r}$ we apply Lemma \ref{lem:harmonic_approximation} to obtain the unique solution $h \in u-a_1 + \WW^{k,\varphi}_0(\B_r(x),\bb R^N)$ to the problem
  \begin{equation}
    (-1)^k \nabla^k : F''(\nabla^k a_1)\nabla^kh = 0
  \end{equation} 
  in $\B_r(x).$ Then we define $a_2 = \sum_{\lvert\alpha\rvert\leq k} \frac{\D^{\alpha}h(x_0)}{\alpha!}(x-x_0)^{\alpha}$ as before, then we can apply Lemma \ref{lem:localmin_caccioppoli} to estimate
  \begin{equation}
    \begin{split}
      \widetilde E(x,\sigma r) &\leq \kappa \,\dashint_{\B_{2\sigma r}} \varphi_{1+M}\left( \lvert\nabla^k(u-a_{x,2\sigma r})\rvert \right) \,\d x \\
                               &\quad+ C\,\dashint_{\B_{2\sigma r(x)}} \varphi_{1+M}\left( \frac{\lvert u - a_{x,2\sigma r}\rvert}{(2\sigma r)^k} \right) \,\d x\\
                    &\leq C\kappa \, \dashint_{\B_{2\sigma r}} \varphi_{1+M}\left( \frac{\lvert\nabla^{k-1}(u-a_{x,r})\rvert}{2\sigma R} \right) \,\d x \\
                    & \quad + C\,\dashint_{\B_{2\sigma r(x)}} \varphi_{1+M}\left( \frac{\nabla^{k-1}(\lvert u - a_{x,2\sigma r}\rvert)}{2\sigma r} \right) \,\d x\\
                    &\leq C_{\sigma}\kappa \mathrm E_M(x,r) + C\,\dashint_{\B_{2\sigma r}} \varphi_{1+M}\left( \frac{\nabla^{k-1}\left(\lvert u - a_1 - a_2\rvert\right)}{2\sigma r} \right) \,\d x,
        \end{split}
  \end{equation} 
  where we have used (\ref{eq:affine_derivative2}) followed by the Poincar\'e inequality on $\B_r(x)$ for the first term, and the quasi-minima property (\ref{eq:affine_quasiminima}) of $\nabla^{k-1}a_{x,2\sigma r}.$ From here the rest follows by arguing exactly as in Lemma \ref{lem:excess_decay}.

  Given the above excess decay estimate, the theorem follows by an analogous iteration argument. A slight modification is needed to ensure that $\lvert\nabla^ka_{x,2\sigma r}\rvert,\lvert\nabla^ka_{x,r}\rvert \leq C(M),$ which can be done using (\ref{eq:affine_derivative}) and a similar argument as in \eqref{eq:average_differences}. We also choose $\kappa,\tilde\delta$ so that $CC_{\sigma}(\kappa+\tilde\delta) \leq \frac13\sigma^{2\alpha}$ and the rest follows as in the proof of Theorem \ref{thm:main_epsreg}.
\end{proof}

\section*{Acknowledgements}
The work was carried out while the author was a DPhil student at the University of Oxford, and the author would like to thank his advisor Jan Kristensen for the many helpful discussions and suggestions.


\begin{thebibliography}{99}
%
\bibitem{article:AcerbiFusco84}
E.\,Acerbi and N.\,Fusco, “Semicontinuity problems in the calculus of
variations,” \emph{Arch.\,Ration.\,Mech.\,Anal.}, vol.\,86, no.\,2,
pp.\,125–145, 1984, \textsc{issn}: 0003-9527. \textsc{doi}:
\doi{10.1007/BF00275731}.
%
\bibitem{article:AcerbiFusco87}
——, “A regularity theorem for minimizers of quasiconvex integrals,”
\emph{Arch.\,Ration.\,Mech.\,Anal.}, vol.\,99, no.\,3, pp.\,261–281, Sep.\,1987,
\textsc{issn}: 0003-9527. \textsc{doi}: \doi{10.1007/BF00284509}.
%
\bibitem{article:AcerbiMingione01}
E.\,Acerbi and G.\,Mingione, “Regularity results for a class of
quasiconvex functionals with nonstandard growth,” \emph{Ann.\,Della\,Scuola\,Norm.\,Super.\,Pisa - Cl.\,Sci.}, vol.\,30, no.\,2, pp.\,311–339, 2001.
%
\bibitem{book:AdamsFournier03}
R.\,A.\,Adams and J.\,Fournier, \emph{Sobolev Spaces}. Academic
Press, 2003, p.\,305, \textsc{isbn}: 978-0-08-054129-7.
%
\bibitem{article:ADN2}
S.\,Agmon, A.\,Douglis, and L.\,Nirenberg, “Estimates near the boundary
for solutions of elliptic partial differential equations satisfying general
boundary conditions II,” \emph{Commun.\,Pure Appl.\,Math.}, vol.\,17,
no.\,1, pp.\,35–92, Feb.\,1964, \textsc{issn}: 00103640. \textsc{doi}:
\doi{10.1002/cpa.3160170104}.
%
\bibitem{article:BalciEtAl20}
A.\,K.\,Balci, L.\,Diening, and M.\,Weimar, “Higher order
Calderón-Zygmund estimates for the $p$-Laplace equation,” \emph{J.\,Diff\,Eq.}, vol.\,268, no.\,2, pp.\,590–635, Jan.\,2020,
\textsc{issn}: 0022-0396. \textsc{doi}: \doi{10.1016/j.jde.2019.08.009}.
%
\bibitem{article:BarlinEtAl24}
M.\,B\"arlin, F.\,Gmeineder, C.\,Irving, J.\,Kristensen, “$\mathcal{A}$-harmonic approximation and partial regularity, revisited,” Dec.\,2022. arXiv:\arxiv{2212.12821} \texttt{[math.AP]}
%
\bibitem{article:BhattacharyaLeonetti91}
T.\,Bhattacharya and F.\,Leonetti, “A new poincaré inequality and its
application to the regularity of minimizers of integral functionals with
nonstandard growth,” \emph{Nonlinear Anal.\,Theory.\,Methods.\,App}., vol.\,17, no.\,9, pp.\,833–839, Jan.\,1991, \textsc{issn}:
0362546X. \textsc{doi}: \doi{10.1016/0362-546X(91)90157-V}.
%
\bibitem{article:Bogelein12}
V.\,Bögelein, “Partial regularity for minimizers of discontinuous
quasi-convex integrals with degeneracy,” \emph{J.\,Diff.\,Eq.}, vol.\,252, no.\,2, pp.\,1052–1100, Jan.\,2012, \textsc{issn}:
0022-0396. \textsc{doi}: \doi{10.1016/j.jde.2011.09.031}.
%
\bibitem{article:Cagnetti11}
F.\,Cagnetti, “$k$-quasi-convexity reduces to quasi-convexity,”
\emph{Proc.\,R.\,Soc.\,Edinb.\,Sect.\,Math.}, vol.\,141, no.\,4, pp.\,673–708,
Aug.\,2011, \textsc{issn}: 1473-7124, 0308-2105. \textsc{doi}: \doi{10.1017/S0308210510000867}.
%
\bibitem{article:CamposCordero17}
J.\,Campos Cordero, “Boundary regularity and sufficient conditions for
strong local minimizers,” \emph{J.\,Funct.\,Anal.}, vol.\,272, no.\,11,
pp.\,4513–4587, Jun.\,2017, \textsc{issn}: 00221236. \textsc{doi}: \doi{10.1016/j.jfa.2017.02.027}.
%
\bibitem{article:CamposCordero21}
——, “Partial regularity for local minimizers of variational integrals
with lower order terms,” \emph{Q.\,J.\,Math.},
2021, \textsc{issn}: 0033-5606, 1464-3847. \textsc{doi}:
\doi{10.1093/qmath/haab056}.
%
\bibitem{article:CarozzaFuscoMingione}
M.\,Carozza, N.\,Fusco, and G.\,Mingione, “Partial regularity of
minimizers of quasiconvex integrals with subquadratic growth,”
\emph{Ann.\,Mat.\,Pura.\,Ed.\,Appl.}, vol.\,175, no.\,1, pp.\,141–164, Dec.\,1998,
\textsc{issn}: 0373-3114.\,\textsc{doi}: \doi{10.1007/BF01783679}.
%
\bibitem{article:CarozzaNapoli03}
M.\,Carozza and A.\,P.\,di Napoli, “Partial regularity of local minimizers
of quasiconvex integrals with sub-quadratic growth,” \emph{Proc.\,R.\,Soc.
Edinb.\,Sect.\,Math.}, vol.\,133, no.\,6, pp.\,1249–1262, Dec.\,2003,
\textsc{issn}: 1473-7124, 0308-2105. \textsc{doi}:
\doi{10.1017/S0308210500002900}.
%
\bibitem{article:ChlebickaEtAl19}
I.\,Chlebicka, F.\,Giannetti, and A.\,Zatorska-Goldstein, “Elliptic
problems with growth in nonreflexive Orlicz spaces and with measure or
$L^1$ data,” \emph{J.\,Math.\,Anal.\,App.},
vol.\,479, no.\,1, pp.\,185–213, Nov.\,2019, \textsc{issn}: 0022-247X.
\textsc{doi}: \doi{10.1016/j.jmaa.2019.06.022}.
%
\bibitem{article:Cianchi96}
A.\,Cianchi, “A Sharp Embedding Theorem for Orlicz-Sobolev Spaces,”
\emph{Indiana Univ.\,Math.\,J.}, vol.\,45, no.\,1, pp.\,39–65, 1996,
\textsc{issn}: 0022-2518.
%
\bibitem{article:DalMasoEtAl04}
G.\,Dal Maso, I.\,Fonseca, G.\,Leoni, and M.\,Morini, “Higher-Order
Quasiconvexity Reduces to Quasiconvexity,” \emph{Arch.\,Ration.\,Mech.\,Anal.}, vol.\,171, no.\,1, pp.\,55–81, Jan.\,2004, \textsc{issn}: 1432-0673.
\textsc{doi}: \doi{10.1007/s00205-003-0278-1}.
%
\bibitem{article:DeFilippis21}
C.\,De Filippis, “Quasiconvexity and partial regularity via nonlinear
potentials,”  
\emph{J.\,Math.\,Pures.\,Appl.}, vol.\,163, pp.\,11-82, Jul.\,2022. \textsc{doi}: \doi{10.1016/j.matpur.2022.05.001}

\bibitem{article:DeFilippisStroffolini2023}
C.\,De Filippis and B.\,Stroffolini, “Singular multiple integrals and nonlinear potentials,” \emph{J.\,Func.\,Anal.}, vol.\,285, no.\,2, Jul.\,2023. \textsc{doi}: \doi{10.1016/j.jfa.2023.109952}.

%
\bibitem{article:degiorgi68}
E.\,De Giorgi, “Un esempio di estremali discontinue per un problema
variazionale di tipo ellittico,” \emph{Boll.\,Dell.\,Unione Mat.\,Ital.}, vol.\,4,
pp.\,135–137, 1968.
%
\bibitem{article:DieningEttwein08}
L.\,Diening and F.\,Ettwein, “Fractional estimates for non-differentiable
elliptic systems with general growth,” vol.\,20, no.\,3, pp.\,523–556, May
2008, \textsc{issn}: 1435-5337. \textsc{doi}: \doi{10.1515/FORUM.2008.027}.
%
\bibitem{article:Dieningetal12}
L.\,Diening, D.\,Lengeler, B.\,Stroffolini, and A.\,Verde, “Partial
Regularity for Minimizers of Quasi-convex Functionals with General
Growth,” \emph{SIAM Jounral Math.\,Anal.}, vol.\,44, pp.\,3594–3616,
2012. \textsc{doi}: \doi{10.1137/120870554}.
%
\bibitem{article:DRS10}
L.\,Diening, M.\,Ružička, and K.\,Schumacher, “A decomposition
technique for John domains,” \emph{Ann.\,Acad.\,Sci.\,Fenn.\,Math.},
vol.\,35, pp.\,87–114, Mar.\,2010, \textsc{issn}: 1239629X. \textsc{doi}:
\doi{10.5186/aasfm.2010.3506}.
%
\bibitem{article:DieningEtAl12a}
L.\,Diening, B.\,Stroffolini, and A.\,Verde, “The $\varphi$-harmonic
approximation and the regularity of $\varphi$-harmonic maps,” \emph{J.\,Diff.\,Eq.}, vol.\,253, no.\,7, pp.\,1943–1958, Oct.\,2012,
\textsc{issn}: 0022-0396. \textsc{doi}: \doi{10.1016/j.jde.2012.06.010}.
%
\bibitem{article:DongKim18}
H.\,Dong and D.\,Kim, “On $L^p$-estimates for elliptic and parabolic
equations with $A_p$ weights,” \emph{Trans.\,Am.\,Math.\,Soc.}, vol.\,370,
no.\,7, pp.\,5081–5130, Feb.\,2018, \textsc{issn}: 0002-9947. \textsc{doi}: \doi{10.1090/tran/7161}.
%
\bibitem{article:DuzaarGrotowski00}
F.\,Duzaar and J.\,F.\,Grotowski, “Optimal interior partial regularity for
nonlinear elliptic systems: The method of A-harmonic approximation,”
\emph{manuscripta math.}, vol.\,103, no.\,3, pp.\,267–298, Nov.\,2000,
\textsc{issn}: 1432-1785. \textsc{doi}: \doi{10.1007/s002290070007}.
%
\bibitem{article:DuzaarMingione04}
F.\,Duzaar and G.\,Mingione, “The p-harmonic approximation and the
regularity of p-harmonic maps,” \emph{Cal.\,Var}, vol.\,20, no.\,3,
pp.\,235–256, Jul.\,2004, \textsc{issn}: 1432-0835. \textsc{doi}: \doi{10.1007/s00526-003-0233-x}.
%
\bibitem{article:DuzaarSteffen02}
F.\,Duzaar and K.\,Steffen, “Optimal interior and boundary regularity
for almost minimizers to elliptic variational integrals,” \emph{J.\,Für
Reine Angew.\,Math.\,Crelles J.}, vol.\,2002, no.\,546, Jan.\,2002,
\textsc{issn}: 0075-4102, 1435-5345. \textsc{doi}: \doi{10.1515/crll.2002.046}.
%
\bibitem{article:EvansGariepy87}
L.\,C.\,Evans and R.\,F.\,Gariepy, “Blowup, Compactness and Partial
Regularity in the Calculus of Variations,” \emph{Indiana Univ.\,Math.\,J.}, vol.\,36, no.\,2, pp.\,361–371, 1987, \textsc{issn}: 0022-2518.
%
\bibitem{article:Evans86}
L.\,C.\,Evans, “Quasiconvexity and partial regularity in the calculus of
variations,” \emph{Arch.\,Ration.\,Mech.\,Anal.}, vol.\,95, no.\,3,
pp.\,227–252, Sep.\,1986, \textsc{issn}: 0003-9527. \textsc{doi}: \doi{10.1007/BF00251360}.
%
\bibitem{article:Franceschini19}
F.\,Franceschini, “Partial regularity for $\BV^{\mathscr{B}}$ local
minimizers,” M.S.\,thesis, Universit‘a degli Studi di Pisa, 2019.
%
\bibitem{article:FrieseckeEtAl02}
G.\,Friesecke, R.\,D.\,James, and S.\,Müller, “A theorem on geometric
rigidity and the derivation of nonlinear plate theory from
three-dimensional elasticity,” \emph{Commun.\,Pure Appl.\,Math.},
vol.\,55, no.\,11, pp.\,1461–1506, 2002, \textsc{issn}: 1097-0312.
\textsc{doi}: \doi{10.1002/cpa.10048}.
%
\bibitem{article:FuscoHutchinson85}
N.\,Fusco and J.\,Hutchinson, “$C^{1,\alpha}$ Partial regularity of functions
minimising quasiconvex integrals,” \emph{Manuscripta Math.}, vol.\,54,
no.\,1, pp.\,121–143, Mar.\,1985, \textsc{issn}: 1432-1785. \textsc{doi}: \doi{10.1007/BF01171703}.
%
\bibitem{article:GiaquintaModica86}
M.\,Giaquinta and G.\,Modica, “Partial regularity of minimizers of
quasiconvex integrals,” \emph{Ann.\,Inst.\,Henri Poincaré C.\,Anal.\,Non Linéaire}, vol.\,3, no.\,3,
pp.\,185–208, 1986.
%
\bibitem{book:GilbargTrudinger98}
D.~Gilbert and N.~Trudinger.
\newblock \emph{Elliptic Partial Differential Equations of Second Order}.
\newblock {Springer-Verlag Berlin Heidelberg}, 1998.
%
\bibitem{book:Giusti03}
E.\,Giusti, “Direct methods in the calculus of variations,” World Scientific, Jan.
2003, \textsc{isbn}: 978-981-238-043-2.\,\textsc{doi}: \doi{10.1142/5002}.
%
\bibitem{article:GiustiMiranda68}
E.\,Giusti and M.\,Miranda, “Sulla regolarità delle soluzioni deboli di
una classe di sistemi ellittici quasi-lineari,” \emph{Arch.\,Ration.\,Mech.\,Anal.}, vol.\,31, no.\,3, pp.\,173–184, Jan.\,1968, \textsc{issn}: 0003-9527.
\textsc{doi}: \doi{10.1007/BF00282679}.
%
\bibitem{article:Gmeineder21}
F.\,Gmeineder, “Partial regularity for symmetric quasiconvex
functionals on BD,” \emph{J.\,de Math.\,Pures et App.}, vol.\,145, pp.\,83–129, Jan.\,2021, \textsc{issn}: 0021-7824.
\textsc{doi}: \doi{10.1016/j.matpur.2020.09.005}.
%
\bibitem{article:GmeinederKristensen19}
F.\,Gmeineder and J.\,Kristensen, “Partial regularity for BV
minimizers,” \emph{Arch.\,Ration.\,Mech.\,Anal.}, vol.\,232, no.\,3,
pp.\,1429–1473, Jun.\,2019, \textsc{issn}: 0003-9527. \textsc{doi}: \doi{10.1007/s00205-018-01346-5}.
%
\bibitem{article:GmeinederKristensen19a}
——, “Regularity for higher order quasiconvex problems with linear
growth from below,” Mar.\,2019. arXiv:\arxiv{1903.08124} \texttt{[math.AP]}
%
\bibitem{article:GmeinederKristensen24}
——, “Quasiconvex functionals of $(p,q)$-growth and the partial regularity of relaxed minimisers,” \emph{Arch.\,Ration.\,Mech.\,Anal.}, vol.\,248, no.\,80, 2024. \textsc{doi}: \doi{10.1007/s00205-024-02013-8}
%
\bibitem{article:Guidorzi00}
M.\,Guidorzi, “A Remark on Partial Regularity of Minimizers of
Quasiconvex Integrals of Higher Order,” \emph{Univ.\,Degli Studi Trieste
Dipartimento Sci.\,Mat.}, no.\,33, pp.\,1–24, 2000.
%
\bibitem{article:Habermann13}
J.\,Habermann, “Partial regularity for nonlinear elliptic systems with
continuous growth exponent,” \emph{Annali di Matematica}, vol.\,192,
no.\,3, pp.\,475–527, Jun.\,2013, \textsc{issn}: 1618-1891. \textsc{doi}: \doi{10.1007/s10231-011-0233-y}.
%
\bibitem{article:Irving21}
C.\,Irving, “$\mathrm{BMO}$ $\varepsilon$-regularity results for
solutions to Legendre-Hadamard elliptic systems,” \emph{Calc.\,Var.\,Partial\, Differ.\,Eq.}, vol.\,62, no.\,166, Jun.\,2023. \textsc{doi}: \doi{10.1007/s00526-023-02492-9}
%
\bibitem{thesis:Irving22}
——, “Regularity theory in the calculus of variations and elliptic systems,” DPhil Thesis, University of Oxford, 2022.
%
\bibitem{book:KokilashviliKrbec91}
V.\,Kokilashvili and M.\,Krbec, \emph{Weighted Inequalities in Lorentz
and Orlicz Spaces}. World Scientific, Dec. 1991, \textsc{isbn}:
978-981-02-0612-3. \textsc{doi}: \doi{10.1142/1367}.
%
\bibitem{article:KrasnoselskiiRutickii61}
M.\,A.\,Krasnosel’skii and Y.\,B.\,Rutickii, \emph{Convex {{Functions}}
and {{Orlicz Spaces}}}. P.\,Noordhoff, 1961.
%
\bibitem{article:KristensenTaheri03}
J.\,Kristensen and A.\,Taheri, “Partial regularity of strong local
minimizers in the multi-dimensional calculus of variations,” \emph{Arch.\,Ration.\,Mech.\,Anal.}, vol.\,170, no.\,1, pp.\,63–89, Nov.\,2003, \textsc{issn}:
0003-9527. \textsc{doi}: \doi{10.1007/s00205-003-0275-4}.
%
\bibitem{article:Kronz02}
M.\,Kronz, “Partial regularity results for minimizers of quasiconvex
functionals of higher order,” \emph{Ann.\,Inst.\,Henri Poincaré C.\,Anal.\,Non Linéaire}, vol.\,19, no.\,1, pp.\,81–112, 2002, \textsc{issn}: 02941449.
\textsc{doi}: \doi{10.1016/S0294-1449(01)00072-5}.
%
\bibitem{article:Krylov07}
N.\,V.\,Krylov, “Parabolic and Elliptic Equations with VMO
Coefficients,” \emph{Commun.\,Partial Differ.\,Equ.}, vol.\,32, no.\,3,
pp.\,453–475, Mar.\,2007, \textsc{issn}: 0360-5302. \textsc{doi}: \doi{10.1080/03605300600781626}.
%
%\bibitem{article:Lacroix74}
%M.-T.\,Lacroix, “Espaces de traces des espaces d’Orlicz-Sobolev et
%applications,” \emph{J.\,Math.\,Pures.\,Appl.}, vol.\,53, no.\,9, pp.\,439–458,
%1974.
%
\bibitem{book:Maligranda89}
L.\,Maligranda, “Orlicz spaces and interpolation.” Seminários de Matemática, 1989,  \textsc{issn}: 0103-5258 ; 5
%
\bibitem{article:Mazya69}
V.\,G.\,Maz’ya, “Examples of nonregular solutions of quasilinear elliptic
equations with analytic coefficients,” \emph{Funct\,Anal\,Appl}, vol.\,2,
no.\,3, pp.\,230–234, 1969, \textsc{issn}: 0016-2663, 1573-8485.
\textsc{doi}: \doi{10.1007/BF01076124}.
%
\bibitem{article:Meyers65}
N.\,G.\,Meyers, “Quasi-convexity and lower semi-continuity of multiple
variational integrals of any order,” \emph{Trans.\,Amer.\,Math.\,Soc.},
vol.\,119, pp.\,125–149, 1965, \textsc{issn}: 0002-9947. \textsc{doi}: \doi{10.2307/1994235}.
%
\bibitem{article:MooneySavin16}
C.\,Mooney and O.\,Savin, “Some singular minimizers in low dimensions
in the calculus of variations,” \emph{Arch.\,Ration.\,Mech.\,Anal.},
vol.\,221, no.\,1, pp.\,1–22, Jul.\,2016, \textsc{issn}: 0003-9527. \textsc{doi}: \doi{10.1007/s00205-015-0955-x}.
%
\bibitem{article:Morrey52}
C.\,B.\,Morrey, “Quasi-convexity and the lower semicontinuity of
multiple integrals.,” \emph{Pacific J.\,Math.}, vol.\,2, no.\,1, pp.\,25–53,
1952, \textsc{issn}: 0030-8730.
%
\bibitem{article:Morrey68}
——, “Partial regularity results for non-linear elliptic systems,” Tech.
Rep.\,7, 1968, pp.\,649–670.
%
\bibitem{article:MullerSverak03}
S.\,Müller and V.\,Šverák, “Convex integration for Lipschitz mappings
and counterexamples to regularity,” \emph{Ann.\,Math.}, vol.\,157, no.\,3,
pp.\,715–742, May 2003, \textsc{issn}: 0003-486X. \textsc{doi}: \doi{10.4007/annals.2003.157.715}.
%
\bibitem{article:Necas77}
J.\,Nečas, “Example of an irregular solution to a nonliear elliptic system
with analytic coefficients and conditions for regularity,” in \emph{Theory
Nonlinear Oper.\,{{Constr}}.\,{{Asp}}.}, R.\,Kluge and W.\,Müller, Eds., Berlin:
Akademie-Verlag, 1977, pp.\,197–206.
%
\bibitem{article:RaoRen91}
M.\,Rao and Z.\,Ren, “Theory of Orlicz spaces,” Basel, 1991.
%
\bibitem{article:Riordan55}
W.\,Riordan, “On the interpolation of operations,” PhD Thesis, University of Chicago, 1995.
\bibitem{article:Schemm11}
S.\,Schemm, “Partial regularity of minimizers of higher order integrals
with $(p, q)$-growth,” \emph{ESAIM Control Optim.\,Calc.\,Var.}, vol.\,17,
no.\,2, pp.\,472–492, Apr.\,2011, \textsc{issn}: 1292-8119, 1262-3377.
\textsc{doi}: \doi{10.1051/cocv/2010016}.
%
\bibitem{article:SchemmSchmidt09}
S.\,Schemm and T.\,Schmidt, “Partial regularity of strong local
minimizers of quasiconvex integrals with $(p, q)$-growth,” \emph{Proc.\,R.
Soc.\,Edinb.\,Sect.\,Math.}, vol.\,139, no.\,3, pp.\,595–621, Oct.\,2009,
\textsc{issn}: 1473-7124, 0308-2105. \textsc{doi}: \doi{10.1017/S0308210507001278}.
%
\bibitem{book:Simon83}
L.\,Simon, \emph{Lectures on Geometric Measure Theory}. Centre for
Mathematical Analysis, Australian National University, 1983, p.\,272,
\textsc{isbn}: 0-86784-429-9.
%
\bibitem{book:Simon96}
——, \emph{Theorems on {{Regularity}} and {{Singularity}} of
{{Energy Minimizing Maps}}}. Basel: Birkhäuser Basel, 1996,
\textsc{isbn}: 978-3-7643-5397-1 978-3-0348-9193-6. \textsc{doi}: \doi{10.1007/978-3-0348-9193-6}.
%
\bibitem{article:Stampacchia65}
G.\,Stampacchia, “The spaces $\mathcal L^{p,\lambda},$ $N^{p,\lambda}$ and
interpolation,” \emph{Ann.\,Della.\,Sc.\,Norm Super Pisa Cl.\,Sci}, vol.\,19,
no.\,3, pp.\,443–462, 1965.
%
\bibitem{article:SverakYan02}
V.\,Sverák and X.\,Yan, “Non-Lipschitz minimizers of smooth uniformly
convex functionals,” \emph{Proc.\,Natl.\,Acad.\,Sci.\,USA}, vol.\,99, no.\,24,
pp.\,15 269–15 276, Nov.\,2002, \textsc{issn}: 0027-8424. \textsc{doi}: \doi{10.1073/pnas.222494699}.
%
\bibitem{article:Szekelyhidi04}
L.\,Székelyhidi, “The regularity of critical points of polyconvex
functionals,y \emph{Arch.\,Ration.\,Mech.\,Anal.}, vol.\,172, no.\,1,
pp.\,133–152, Apr.\,2004, \textsc{issn}: 0003-9527. \textsc{doi}: \doi{10.1007/s00205-003-0300-7}.
%
\bibitem{book:Taylor1_11}
M.\,E.\,Taylor, \emph{Partial Differential Equations {{I}}},
ser. Applied Mathematical Sciences.\,New York, NY: Springer New York,
2011, vol.\,115, \textsc{isbn}: 978-1-4419-7054-1. \textsc{doi}: \doi{10.1007/978-1-4419-7055-8}.
%
\bibitem{book:Ziemer89}
W.\,P.\,Ziemer, \emph{Weakly {{Differentiable Functions}}},
ser. Graduate Texts in Mathematics. New York, NY: Springer New York,
1989, vol. 120, \textsc{isbn}: 978-1-4612-6985-4 978-1-4612-1015-3.
\textsc{doi}: \doi{10.1007/978-1-4612-1015-3}.
%
\bibitem{article:Zygmund56}
A.\,Zygmund, “On a theorem of Marcinkiewicz concerning interpolation of operators,“ \emph{J.\,Math.\,Pures Appl.}, vol.\,35, pp.\,223--248, 1956. \textsc{doi}: \doi{10.1007/978-94-009-1045-4_12}
\end{thebibliography}
\end{document}